\documentclass[11pt,a4paper]{amsart}
\usepackage[T1]{fontenc}
\usepackage[utf8]{inputenc}
\usepackage{lmodern}
\usepackage[margin=27mm]{geometry}
\usepackage{amsmath,amssymb,amsthm,mathtools}
\usepackage{booktabs}
\usepackage{microtype}
\usepackage{xcolor}
\usepackage{tikz}
\usetikzlibrary{arrows.meta}
\usepackage[unicode,colorlinks=true,linkcolor=blue!45!black,citecolor=blue!45!black,urlcolor=blue!45!black]{hyperref}
\hypersetup{pdftitle={Loop Space Models and Integral Homology of Connected Quandles},pdfauthor={Takefumi Nosaka},pdfsubject={Marked deloopings, localization, and integral homology}}
\numberwithin{equation}{section}
\theoremstyle{plain}
\newtheorem{theorem}{Theorem}[section]
\newtheorem{maintheorem}{Theorem}

\newtheorem{proposition}[theorem]{Proposition}
\newtheorem{lemma}[theorem]{Lemma}
\newtheorem{corollary}[theorem]{Corollary}
\theoremstyle{definition}
\newtheorem{definition}[theorem]{Definition}
\newtheorem{example}[theorem]{Example}
\theoremstyle{remark}
\newtheorem{remark}[theorem]{Remark}
\newcommand{\Z}{\mathbb Z}
\newcommand{\Q}{\mathbb Q}
\newcommand{\F}{\mathbb F}
\newcommand{\cM}{\mathcal{M}}
\DeclareMathOperator{\As}{As}
\DeclareMathOperator{\Inn}{Inn}
\DeclareMathOperator{\hofib}{hofib}
\DeclareMathOperator{\hocolim}{hocolim}
\DeclareMathOperator{\Cofib}{Cofib}
\DeclareMathOperator{\id}{id}
\title[Loop Space Models and Integral Homology]{Loop Space Models and Integral Homology of Connected Quandles}
\author{Takefumi Nosaka}
\address{Department of Mathematics, Institute of Science Tokyo, Japan}
\date{October 19, 2026}
\begin{document}

\begin{abstract}
We construct a marked integral delooping of a connected quandle's covering
monoid, realizing its rack space as a homotopy fiber.  For finite connected
quandles at primes $p$ not dividing a point-stabilizer order, we determine the
local $A_\infty$ type and the ordinary rack cohomology shuffle algebra over fields of
characteristic $p$.  We compute integral quandle homology in all positive
degrees for finite connected regular cyclic Alexander quandles and the
tetrahedral quandle.  At such primes dividing the quandle order, rack-space
homotopy groups have exponentially many cyclic direct summands of fixed
$p$-power order in each sufficiently large degree.
\end{abstract}
\maketitle

\section*{Introduction}

Quandles are algebraic structures that arise from conjugation in groups and colorings of knot diagrams.
The rack space $\mathcal{R}_X$ associated with a quandle $X$ has the rack homology complex as its cellular chain complex \cite{FRS}.
For an augmented rack, Clauwens equipped a covering of the associated rack space with a strictly associative monoid structure and showed that this monoid is equivalent to a loop space whenever it is connected \cite[Section~2.5]{Clauwens}.
Mostovoy established a comparison between this monoid and the geometric realization of an associated simplicial group \cite[Theorem~3.1]{Mostovoy}.
Building on these comparisons, we construct an explicit marked delooping from the augmentation data.
We apply its local form to integral quandle homology computations and to the study of the multiplication on the covering monoid.

For a connected quandle $X$ and $x_0\in X$, put $G=\Inn(X)$ and $H=\operatorname{Stab}_G(x_0)$, and write $\cM_X=B(G;X)$ for the covering monoid equipped with the Clauwens product.
We first construct $Y$ as a double mapping cylinder determined by $G,H$ and the right translation $S_{x_0}$.
The equivalence $B\cM_X\simeq Y$ is compatible with the maps from $BG$,
and $\mathcal{R}_X$ is recovered as the homotopy fiber of
$j:BG\to Y$ (Theorem~\ref{int:main}).
This integral model requires neither finiteness nor faithfulness.

In the finite case, for a prime $p$ satisfying $p\nmid|H|$, we obtain an equivalence of grouplike $A_\infty$-spaces
\[
 (\cM_X)_{(p)}
 \simeq_{A_\infty}
 \Omega\bigl((S^2\vee(BG)^\wedge_p)_{(p)}\bigr)
\]
(Theorem~\ref{loc:main} and Proposition~\ref{loc:completion}).
Here the subscript $(p)$ denotes $p$-localization, and the superscript ${}^\wedge_p$ denotes Bousfield--Kan $p$-completion.
We call a prime $p$ satisfying this condition a good prime.
At a good prime, the homotopy type of $(BG)^\wedge_p$ can be recovered from the $A_\infty$ type of $(\cM_X)_{(p)}$ (Theorem~\ref{ainf:classification}).

Combining this model with decompositions of the rack and quandle complexes, we compute quandle homology from the homology of an auxiliary loop space with coefficients in $\Z_{(p)}$.
Keeping track of both tensor and Tor terms allows us to determine the orders and multiplicities of the cyclic direct summands.
In the following two cases, the annihilation theorem
\cite[Corollary~6.2]{Nosaka2013} shows that, in degrees $n\ge2$,
only primes dividing $|X|$ need be considered when recovering
integral quandle homology from its localizations.
Throughout,~$H_n^Q(X;\Z)$ denotes integral quandle homology, and $C_m$ denotes the cyclic group of order $m$.
Our main computational results are as follows.

\begin{maintheorem}[Cyclic Alexander quandles]\label{intro:cyclic}
Let $p$ be an odd prime, let $r\ge1$ be an integer, and let $\omega\in(\Z/p^r\Z)^\times$ be a unit whose order $d>1$ divides $p-1$.
Equip $X=\Z/p^r\Z$ with the operation $x*y=\omega x+(1-\omega)y$.
Then
\[
 H_1^Q(X;\Z)\cong\Z,\qquad
 H_n^Q(X;\Z)\cong(C_{p^r})^{b_n}\quad(n\ge2)
\]
where the nonnegative integers $b_n$ are determined by
\[
 \sum_{n\ge2}b_nt^n
 =\frac{t^{2d-1}(1+t)}{1-t^{2d-2}-t^{2d-1}-t^{2d}}.
\]
\end{maintheorem}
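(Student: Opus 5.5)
We would combine the local model of Theorem~\ref{loc:main} at the good prime $p$ with the annihilation theorem. First the group theory. For $X=\Z/p^r\Z$ with $x*y=\omega x+(1-\omega)y$, the inner automorphism group is $G=\Inn(X)\cong \Z/p^r\rtimes C_d$, where $C_d=\langle\omega\rangle$ acts by multiplication. The quandle is connected since $1-\omega$ is a unit: $\omega\not\equiv1 \pmod p$ because $d>1$ and $d\mid p-1$. The stabilizer of $x_0=0$ is $H\cong C_d$, so $p\nmid|H|$ and $p$ is a good prime.

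By \cite[Corollary~6.2]{Nosaka2013}, $H_n^Q(X;\Z)$ for $n\ge2$ is annihilated by $|X|=p^r$, so it suffices to compute $H_n^Q(X;\Z_{(p)})$. The degree-one statement $H_1^Q\cong\Z$ holds for any connected quandle.

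Next we identify the $p$-local homology of $\cM_X$. By Theorem~\ref{loc:main}, $(\cM_X)_{(p)}\simeq\Omega\bigl((S^2\vee(BG)^\wedge_p)_{(p)}\bigr)$, and $\mathcal R_X$ is the homotopy fiber of $BG\to Y$, with $\cM_X$ covering it. Since $p\nmid d$, the space $(BG)^\wedge_p$ is the homotopy orbit of $B(\Z/p^r)^\wedge_p$ by $C_d$. Its $\F_p$- and $\Z_{(p)}$-homology is therefore the $C_d$-invariant part of $H_*(B\Z/p^r)$. The generator of $H_{2k-1}(B\Z/p^r;\Z)\cong C_{p^r}$ carries weight $\omega^{k}$, so the invariants survive exactly in degrees $2k-1$ with $d\mid k$. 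The reduced $\Z_{(p)}$-homology of $(BG)^\wedge_p$ is thus $C_{p^r}$ in degrees $2dj-1$ for $j\ge1$, and zero otherwise. Integrally, $\widetilde H_*\bigl(S^2\vee(BG)^\wedge_p\bigr)$ is then the free part $\Z_{(p)}$ in degree $2$ together with the torsion $C_{p^r}$ in degrees $2dj-1$.

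Now apply Bott--Samelson to $\Omega\Sigma$ of a suspension-type space. Write $S^2\vee (BG)^\wedge_p\simeq\Sigma W$ $p$-locally, which is plausible since the $p$-local homology of $(BG)^\wedge_p$ is that of a wedge of Moore spaces $\Sigma M(C_{p^r},2dj-2)$. Then $H_*(\Omega\Sigma W;\Z_{(p)})$ is the tensor algebra on $\widetilde H_*(W)$, computed integrally by the Künneth formula. Tensor products of $C_{p^r}$-pieces contribute $C_{p^r}$, and Tor-terms contribute $C_{p^r}$ shifted up by one. All summands therefore have order exactly $p^r$, with a free part coming only from powers of the degree-$1$ class.

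We then pass from $\cM_X$ to the rack complex and to the quandle complex, using the decompositions $C^R\simeq C^Q\oplus C^D$ and the splitting of the rack space (Litherland--Nelson type, as used earlier in the paper). This strips off the free tensor powers and the degenerate part. A generating-function count then follows. The torsion generators come from $W$ in degrees $2dj-2$, with Tor partners in degrees $2dj-1$, interleaved with free degree-$1$ letters. Removing the contributions of the degree-$1$ letters (the $(1-t)$ factors, i.e.\ the degenerate and rack-versus-quandle part) should leave the denominator $1-t^{2d-2}-t^{2d-1}-t^{2d}$ and the numerator $t^{2d-1}(1+t)$.

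The main obstacle is this last bookkeeping step. It requires making the rack-to-quandle passage precise at the level of $\Z_{(p)}$-homology, and checking that no differentials or extension problems produce summands of order smaller than $p^r$. I would first verify the series against known low-degree cases, such as $d=2$ (the dihedral quandle $R_{p^r}$, where $H_3^Q\cong C_{p^r}$ is known). I would then confirm that the recursion $b_n=b_{n-2d+2}+b_{n-2d+1}+b_{n-2d}$ matches the word count in the tensor algebra after the splitting.
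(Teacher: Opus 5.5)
\textbf{The proposal has a genuine gap} at the step where you write $S^2\vee(BG)^\wedge_p\simeq\Sigma W$ $p$-locally and apply Bott--Samelson. Having the homology of a wedge of Moore spaces does not make a space one, and here the claim is false. Reduced cohomology of a suspension has trivial cup products, whereas $H^*((BG)^\wedge_p;\F_p)\cong H^*(G;\F_p)=\Lambda(z_{2d-1})\otimes\F_p[w_{2d}]$ has $w^2\ne0$.

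This matters because $H_*(\Omega L)$ for $L=\Cofib(BH\to BG)$ is not determined by $\widetilde H_*(L)$. The paper uses the cohomology algebra, the higher Bockstein $\beta_r(z)=w$, and the Broto--Levi characterization to identify $\Omega L_{(p)}\simeq S^{2d-1}\{p^r\}_{(p)}$. Its $\Z_{(p)}$-homology is $C_{p^r}$ exactly in degrees $k(2d-2)$, $k\ge1$.

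Your tensor algebra on $C_{p^r}$-letters in degrees $2dj-2$ instead contains $\operatorname{Tor}(C_{p^r},C_{p^r})\cong C_{p^r}$ in degree $4d-3$, where the true group vanishes. Concretely, for $d=2$ your model gives $\dim_{\F_p}H_5(\cM;\F_p)=13$ (compositions of $5$ into parts $1,2,3$), while the correct value is $12$. So no final bookkeeping can produce the denominator $1-t^{2d-2}-t^{2d-1}-t^{2d}$. The correct replacement for Bott--Samelson is the free-product decomposition $H_*(\Omega(S^2\vee L))\cong T(u)*H_*(\Omega L)$. Made integral through a cobar model and words $c_0uc_1\cdots uc_m$, it gives $\mu=\kappa/(1-t\kappa)$ in a bookkeeping ring that records Tor terms, where $\kappa$ is the class of $H_*(\Omega L;\Z_{(p)})$.

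\textbf{The passage from $\cM$ to rack homology is also missing.} The covering $\cM\to\mathcal R_X$ has deck group $G$ with $p\mid|G|$, so transfer does not compare their $\Z_{(p)}$-homology. You need Clauwens' degree shift $H^R_{n+1}(X)\cong H_n(\mathcal N_X)$, together with transfer along the $H$-covering $\cM\to\mathcal N_X$ (valid since $p\nmid d$). This gives $\rho=1+t\mu$.

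The rack-to-quandle step is then not a removal of $(1-t)$ factors. The Przytycki--Putyra description of the degenerate part, as a split extension of $\bigoplus R_i\otimes Q_j$ by $\bigoplus\operatorname{Tor}(R_i,Q_j)$, gives $\rho-1=\chi+t\rho\chi$. Counting summands of order at least $p^e$ then yields $Q_{\ge e}=t(1+t)K/(1-t(1+t)K)$, with $K=t^{2d-2}/(1-t^{2d-2})$ for $e\le r$ and $K=0$ for $e>r$. This is exactly the stated series, and it also settles your concern about summands of order less than $p^r$.

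Your group-theoretic setup, your computation of $\widetilde H_*((BG)^\wedge_p;\Z_{(p)})$, and the reduction to $p$-local coefficients via the annihilation theorem are all fine.
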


In the case $r=1$, the author computed the integral quandle homology and gave cocycles representing generators of the cohomology with coefficients in $\F_p$ \cite[Theorem~1.1, Corollary~5.9]{Nosaka2013}.
Ishikawa also computed the quandle homology of connected dihedral quandles using a different spectral sequence \cite{Ishikawa}.
Theorem~\ref{intro:cyclic} extends the additive computation in \cite[Theorem~1.1]{Nosaka2013} to the above cyclic family of prime-power order.
The multiplicities $b_n$ are independent of $r$, whereas $p^r$ appears in the orders of the cyclic summands.
Corollary~\ref{cyc:regular-cyclic-integral} gives the corresponding
prime-by-prime formula for every finite connected cyclic Alexander quandle
$(A,T)$ satisfying $\gcd(|A|,\operatorname{ord}(T))=1$.
Such quandles are called regular in Section~\ref{sec:cyclic}.
In particular, the result applies to every dihedral quandle of odd order greater than one.
At a good prime $p$, the local computation also applies to any finite connected quandle whose inner automorphism group has a nontrivial cyclic Sylow $p$-subgroup (Theorem~\ref{cyc:sylow-homology}).

\begin{maintheorem}[The tetrahedral quandle]\label{intro:tetrahedral}
Let $\F_4=\F_2[\omega]/(\omega^2+\omega+1)$ and equip $X=\F_4$ with the operation $x*y=\omega x+(1-\omega)y$.
Then
\[
 H_1^Q(X;\Z)\cong\Z,\qquad
 H_n^Q(X;\Z)\cong C_2^{a_n}\oplus C_4^{b_n}\quad(n\ge2)
\]
where the nonnegative integers $a_n,b_n$ are determined by
\[
 \sum_{n\ge2}b_nt^n=\frac{t^3}{1-t-t^3},\qquad
 \sum_{n\ge2}(a_n+b_n)t^n=\frac{t^2(1+t)}{1-t-t^2-t^3}.
\]
\end{maintheorem}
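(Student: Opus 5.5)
The plan is to work prime by prime and then assemble the local answers. The tetrahedral quandle has $|X|=4$ and $G=\Inn(X)\cong A_4$. The stabilizer $H$ of a point is cyclic of order $3$, so $p=2$ is a good prime. By the annihilation theorem \cite[Corollary~6.2]{Nosaka2013}, $H_n^Q(X;\Z)$ for $n\ge2$ is a finite $2$-group. It therefore suffices to compute $H_n^Q(X;\Z_{(2)})$. The value $H_1^Q(X;\Z)\cong\Z$ holds because $X$ is connected.

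At $p=2$ I will invoke Theorem~\ref{loc:main} and Proposition~\ref{loc:completion}, which give $(\cM_X)_{(2)}\simeq_{A_\infty}\Omega\bigl((S^2\vee (BA_4)^\wedge_2)_{(2)}\bigr)$. The Sylow $2$-subgroup of $A_4$ is $V=C_2\times C_2$, with normalizer $A_4$ acting on it through $C_3$. Hence $(BA_4)^\wedge_2\simeq (BV_{hC_3})^\wedge_2$, and $H^*(BA_4;\F_2)=H^*(BV;\F_2)^{C_3}$. The integral $2$-local homology of $BA_4$ is the $C_3$-invariant part of $H_*(BV;\Z_{(2)})$. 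I will read this off from the K\"unneth decomposition of $H_*(BC_2\times BC_2)$ together with the $C_3$-action, which permutes the three nontrivial characters.

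Next I will compute the homology of $\Omega(S^2\vee B)$, with $B=(BA_4)^\wedge_2$, with $\Z_{(2)}$-coefficients. For this I will use the cobar/Bott--Samelson description: $H_*(\Omega(\Sigma U\vee B))$ is the coproduct of $T(\tilde H_*U)$ and $H_*(\Omega B)$ at the chain level. Equivalently, there is a splitting $\Omega(S^2\vee B)\simeq \Omega B\times\Omega\Sigma(S^1\wedge \Omega B_+)$. This gives
\[
 H_*(\cM_X)\cong H_*(\Omega B)\otimes T\bigl(\tilde H_*(S^1\wedge\Omega B_+)\bigr),
\]
where Tor terms enter through the integral K\"unneth formula. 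Since $B$ is $2$-complete with finite fundamental group, $\Omega B$ is not simply connected. I therefore prefer to pass directly to the rack space: by Theorem~\ref{int:main}, $\mathcal R_X\simeq\hofib(j\colon BG\to Y)$. Locally this is the fiber of $B\to S^2\vee B$, which is $\Omega S^2\ltimes$-type, namely $\bigvee_{k\ge0}\Sigma\,(\Omega S^2)^{\wedge}\!\wedge\ldots$. Concretely, the homotopy fiber of the inclusion $B\to S^2\vee B$ is $S^1\ast\Omega B$-flavored: it is $\Omega S^2\ltimes$ the half-smash, so its homology is the tensor algebra on $\tilde H_*(\Sigma \Omega B_+\wedge S^1)$, suitably shifted. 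The rack homology is then $H_*$ of a tensor algebra on one generator in degree $1$ plus $\Sigma^2$-shifted copies of $H_*(BA_4;\Z_{(2)})$. The quandle homology is extracted via the splitting rack $=$ quandle $\oplus$ degenerate (the Litherland--Nelson decomposition used in the paper). This yields generating functions: each reduced $C_2$ or $C_4$ summand of $H_*(BA_4;\Z_{(2)})$ contributes, and tensor products and Tor of $C_2$, $C_4$ give $C_2$ or $C_4$ summands according to $\min$ orders.

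The main obstacle is bookkeeping the orders. Because the torsion is not of a single exponent, $C_4\otimes C_4=C_4$ while $C_4\otimes C_2=C_2$, and the Tor terms double-count degrees. I will handle this by computing two series separately. The first is $\sum(a_n+b_n)t^n$, the number of cyclic summands, which equals the $\F_2$-Poincaré series corrected by Tor, i.e.\ half of $\dim H_*(\,;\F_2)$ counted across adjacent degrees. The second is $b_n$, read from $H_*(\,;\Z/4)$ minus $H_*(\,;\F_2)$, or equivalently from the second Bockstein page. The first series requires $H^*(BA_4;\F_2)$, with Poincaré series $(1+t^3)/((1-t^2)(1-t^3))$ up to correction; substituting this into the tensor-algebra formula should give $t^2(1+t)/(1-t-t^2-t^3)$. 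For the second series, the $C_4$ summands come from the higher Bockstein structure of $BA_4$, where the only $\Z/4$-torsion arises from the degree-$3$ class $H_3(BA_4;\Z)\supset C_2$... I expect the $C_4$ classes to come from products in the tensor algebra where the degree-$1$ free generator meets torsion. Verifying that this yields $t^3/(1-t-t^3)$ is the step I would check most carefully, against the known low-degree values $H_2^Q=C_2$ and $H_3^Q=C_2\oplus C_4$.
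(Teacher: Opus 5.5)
There is a genuine gap, and it sits exactly where the $C_4$ summands have to come from. You start correctly by expressing $H_*(\cM_X)$ through $H_*(\Omega B)$, with $B=(BA_4)^\wedge_2$. You then try to bypass the loop-space homology by passing to the fiber of $B\to S^2\vee B$, and you assert that rack homology is a tensor algebra on a degree-one generator together with $\Sigma^2$-shifted copies of $H_*(BA_4;\Z_{(2)})$. That step is false. The pinch map is a retraction of $B\to S^2\vee B$, so the fiber of the inclusion is $\Omega$ of the fiber of the pinch map, i.e.\ $\Omega\Sigma^2(\Omega B)_+$. Rack homology is therefore built, with tensor and Tor terms, from $H_n(\Omega B;\Z_{(2)})$ placed in degree $n+1$; the degree-one letter is $H_0$. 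So $H_*(\Omega B)$ cannot be avoided.

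Your substitute input also cannot yield the theorem. $H_*(BA_4;\Z_{(2)})$ is a transfer summand of $H_*(B(C_2\times C_2);\Z_{(2)})$, which has exponent $2$ in positive degrees. Tensor and Tor terms built from $\Z_{(2)}$ and copies of $C_2$ produce only $C_2$'s. You would therefore get $b_n=0$ for every $n$, contradicting the $C_4$ in $H_3^Q$ that you yourself quote. The hope that $C_4$'s appear ``where the free degree-one generator meets torsion'' fails for the same reason, since $\Z_{(2)}\otimes C_2\cong C_2$. Even mod $2$ it fails: with the shift you state, the only rack class in degree $2$ is $u^2$, and \eqref{hom:quandle-recurrence} then gives $q_2=0$ instead of $1$.

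The missing ingredient is $H_*(\Omega(BA_4)^\wedge_2;\Z_{(2)})$. It is $\Z_{(2)}$ in degree $0$, $C_4$ in positive even degrees, and $C_2$ in odd degrees. The paper obtains it by identifying $\Omega(BA_4)^\wedge_2$ with $K_{\mathrm{tet}}$, the fiber of a degree-$4$ map $\mathbb{R}P^3_{(2)}\to S^3_{(2)}$ (Theorem~\ref{tet:loop-model}). That identification uses:
the fibration $SO(3)/A_4\to BA_4\to BSO(3)$;
the degree-$12$ covering $SO(3)\to SO(3)/A_4$;
the $\F_2$-homology equivalence $SO(3)/A_4\to S^3$;
and invertibility of $3$ at the prime $2$.

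The Serre spectral sequence of $\Omega S^3_{(2)}\to K_{\mathrm{tet}}\to\mathbb{R}P^3_{(2)}$ then gives Lemma~\ref{tet:integral-K}; its only nonzero differentials are $d_3$, each multiplication by $4$. With $K_{\ge1}=t/(1-t)$, $K_{\ge2}=t^2/(1-t^2)$ and $K_{\ge3}=0$, Corollary~\ref{hom:exponent-profiles} yields $Q_{\ge1}=t^2(1+t)/(1-t-t^2-t^3)$ and $Q_{\ge2}=t^3/(1-t-t^3)$ at once. Your mod-$2$ and mod-$4$ bookkeeping would also work once this input is in hand. Without it, the proposal has no source for the $C_4$ summands.
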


Theorem~\ref{intro:tetrahedral} establishes the recurrence for the multiplicities of the $C_4$ direct summands conjectured by Niebrzydowski--Przytycki \cite[Example~13(ii)]{NiebrzydowskiPrzytycki2010}.
Their prediction for the entire group, however, requires a correction: in degree $8$, the group is $H_8^Q(X;\Z)\cong C_2^{33}\oplus C_4^4$.

For a finite connected quandle $X$, a good prime $p$, and a field $k$ of characteristic $p$, the marked model also determines the ordinary rack cup product: Proposition~\ref{hom:rack-shuffle} gives a noncanonical graded algebra isomorphism
\[
 H_R^*(X;k)\cong\operatorname{Sh}_k\bigl(sH^*(\Omega L;k)\bigr),
 \qquad L=\Cofib(BH\to BG),
\]
where $\operatorname{Sh}_k(V)$ denotes the graded shuffle algebra on $V$, $s$ raises degree by one, and the shuffle signs use the suspended degrees.
The unit of $H^*(\Omega L;k)$ supplies a letter in degree one.
This is a field-coefficient statement about the ordinary rack cup product; the integral additive computations retain the Tor terms separately.

For a finite connected quandle $X$ and a good prime $p$, the specified multiplication on the $p$-localization $(\widetilde{\cM_X})_{(p)}$ of the universal cover admits a refinement to an $E_2$ structure if and only if $p\nmid|X|$ (Theorem~\ref{ainf:universal-cover-e2}).
When $p\mid|X|$, iterated Samelson products yield torsion direct summands in the homotopy groups of $(\cM_X)_{(p)}$ in every sufficiently large degree.
The Hall-word decomposition strengthens this to an exponential lower bound for the number of cyclic direct summands of some fixed $p$-power order in each sufficiently large degree, and hence also for their cumulative number (Proposition~\ref{ainf:iterated-samelson} and Theorem~\ref{ainf:general-exponential-torsion}).

Section~\ref{sec:preliminaries} sets out the definitions and conventions.
Section~\ref{int:section} constructs the integral model, and Section~\ref{loc:section} treats localization, completion, and the classification of $A_\infty$ types.
After discussing fundamental groups, universal covers, and rationalization in Section~\ref{rat:section}, we develop the homology formulas in Section~\ref{hom:section} and prove Theorems~\ref{intro:cyclic} and~\ref{intro:tetrahedral} in Sections~\ref{sec:cyclic} and~\ref{tet:section}.
Section~\ref{ex:comparisons} illustrates the tensor and Tor terms with a product Alexander quandle of order $49$.
Section~\ref{ex:nonalex-section} gives local computations for
transposition quandles and computes the rack and positive-degree quandle
homology of rank-one symplectic quandles over finite fields of order greater
than $27$, away from the defining characteristic.
Section~\ref{ainf:section} is devoted to applications to multiplicative structures and unstable homotopy theory.
Appendix~\ref{app:hall} supplies the Hall-word count and the asymptotic estimate used in the degreewise growth bound.

\section{Augmented quandles and their rack spaces}\label{sec:preliminaries}

We work in the category of compactly generated spaces and use CW models when necessary.
Unless a continuous strict monoid homomorphism is specified, an equivalence of monoids means an equivalence of grouplike $A_\infty$-spaces.
For a localization $R$ of $\Z$ and a nilpotent space $Z$, let $Z_R$ denote the corresponding localization; we write $Z_{\Z_{(p)}}=Z_{(p)}$ and $Z_{\Q}$ for $p$-localization and rationalization, respectively.
The notation $Z_p^\wedge$ denotes Bousfield--Kan $p$-completion and is also used for classifying spaces of discrete groups.
All quandles and augmentation groups are discrete, and $C_n$ denotes the cyclic group of order $n$.

\subsection{Right actions and augmentation data}

A \emph{quandle} is a nonempty set $X$ with an operation $x*y$ such that $x*x=x$, every right translation $S_y:x\mapsto x*y$ is bijective, and $(x*y)*z=(x*z)*(y*z)$ holds.
We use right actions $x^{gk}=(x^g)^k$ throughout.
Accordingly, products of permutations acting on the right are applied from left to right.
The \emph{inner automorphism group} $\Inn(X)$ is generated by the permutations $S_y$.
A quandle is \emph{connected} if this group acts transitively, and \emph{faithful} if $y\mapsto S_y$ is injective.
Its associated group (or adjoint group) is
\begin{equation}\label{pre:associated-group}
 \As(X)=\langle e_x\ (x\in X)\mid e_y^{-1}e_xe_y=e_{x*y}\rangle.
\end{equation}
The group homomorphism $\As(X)\to\Inn(X)$ sends $e_x$ to $S_x$.

An \emph{augmented quandle} consists of a group $G$, a right $G$-set $X$, and a set map $\eta:X\to G$ satisfying
\begin{equation}\label{pre:augmentation}
 \eta(x^g)=g^{-1}\eta(x)g,
 \qquad x*y=x^{\eta(y)},
 \qquad x^{\eta(x)}=x.
\end{equation}
We mainly use the canonical augmentation $G=\Inn(X)$, $\eta(x)=S_x$ of a connected quandle.
In Sections~\ref{sec:preliminaries} and~\ref{int:section}, the first part of Section~\ref{loc:section}, and Subsections~\ref{low:torus-subsection} and~\ref{low:covers-subsection}, we work more generally with transitive augmentations for which $\eta(X)$ generates $G$.
We retain the chosen action and augmentation map and assume neither finiteness nor finite generation.
Finiteness hypotheses are stated explicitly in computations and comparisons with completion that require them.

Fix $x_0\in X$, put $H=\operatorname{Stab}_G(x_0)$ and $h=\eta(x_0)$, and write $i:H\hookrightarrow G$ for the inclusion homomorphism.
By \eqref{pre:augmentation}, we have $h\in Z(H)$, and $Hg\mapsto x_0^g$ gives an identification $H\backslash G\cong X$ of right $G$-sets.
Under this identification,
\begin{equation}\label{pre:coset-operation}
 Hg*Hk=Hgk^{-1}hk,\qquad \eta(Hg)=g^{-1}hg.
\end{equation}
Since $h\in Z(H)$, these formulas are independent of the coset representatives.
The generation hypothesis implies that the normal closures of both $h$ and $H$ are $G$.

\begin{remark}\label{pre:stabilizer-not-centralizer}
The stabilizer satisfies $H\subseteq C_G(h)$, but equality is not assumed.
Nor do we assume that $\eta$ is injective; thus nonfaithful quandles are included.
\end{remark}

\subsection{Cubical spaces and the monoid product}

For a right $G$-set $S$, define a cubical set without degeneracies by $B(S;X)_n=S\times X^n$.
The two face maps in the $i$th direction are the set maps
\begin{align}
 d_i^0(s;x_1,\ldots,x_n)
   &=(s;x_1,\ldots,\widehat{x_i},\ldots,x_n),\label{pre:faces0}\\
 d_i^1(s;x_1,\ldots,x_n)
   &=(s\eta(x_i);x_1*x_i,\ldots,x_{i-1}*x_i,
                 x_{i+1},\ldots,x_n).\label{pre:faces1}
\end{align}
We also write $B(S;X)$ for its cubical realization.
The rack space is $\mathcal{R}_X=B(*;X)$; it has a single vertex.
For a commutative coefficient ring $\Lambda$, extending the face maps $\Lambda$-linearly gives the boundary homomorphism $\partial_n$; the cellular chain complex of $\mathcal{R}_X$ is the rack complex
\begin{equation}\label{pre:rack-chain}
 C_n^R(X;\Lambda)=\Lambda[X^n],\qquad
 \partial_n=\sum_{i=1}^n(-1)^i(d_i^0-d_i^1),\qquad C_0^R(X;\Lambda)=\Lambda.
\end{equation}
We define $C_*(S;X;\Lambda)$ using the same conventions.
Thus $H_n^R(X;\Lambda)=H_n(\mathcal{R}_X;\Lambda)$, including degree zero.
Since the faces of $\mathcal{R}_X$ in direction $1$ agree, we have $\partial_1=0$.

For the specified augmentation, put $\cM=B(G;X)$; for the canonical augmentation, we also write $\cM_X=B(\Inn(X);X)$.
Put $\mathcal{N}_X=B(X;X)$.
The notation $x^{g}$ denotes the action on every component of a tuple.
The formula for the continuous multiplication map $\cM\times \cM\to \cM$,
\begin{multline}\label{pre:monoid-product}
 [g;t_1,x_1,\ldots,t_m,x_m]\,
 [k;s_1,y_1,\ldots,s_n,y_n]\\
 =[gk;t_1,x_1^k,\ldots,t_m,x_m^k,s_1,y_1,\ldots,s_n,y_n]
\end{multline}
defines a strictly associative product on $\cM$ with unit the vertex $1\in G$.
The identities $\eta(x^k)=k^{-1}\eta(x)k$ and $(x*y)^k=x^k*y^k$ show that this formula is compatible with the cubical face maps.
Associativity follows from $x^{gk}=(x^g)^k$.
This is the Clauwens product \cite[Sections 2.4--2.6]{Clauwens}.

The realization has CW type even for an arbitrary set of cubes, and the unit vertex is a closed CW subcomplex; hence $\cM$ is well-pointed.

The vertices form a subgroup $G\subset \cM$.
Left multiplication by a vertex $a\in G$ sends $[g;\mathbf t,\mathbf x]$ to $[ag;\mathbf t,\mathbf x]$.
This is the action by deck transformations of the regular covering $\cM\to \mathcal{R}_X$.
More generally, $B(S;X)\cong S\times_G \cM$, where the quotient pairs the right action on $S$ with this left action on $\cM$.
In particular, the covering map $\cM\to\mathcal{N}_X$ and the induced homeomorphism of quotient spaces are given by
\begin{equation}\label{pre:intermediate-cover}
 \mathcal{N}_X\cong H\backslash \cM,
 \qquad [g;\mathbf t,\mathbf x]\longmapsto
          [x_0^g;\mathbf t,\mathbf x].
\end{equation}
The covering property follows by lifting a cube with a prescribed initial vertex \cite[Section 2.5]{Clauwens}\cite[Section 3]{FRS}.

The edge $[g;t,x]$ joins $g$ to $g\eta(x)$.
Thus $\cM$ is path-connected and hence grouplike, and the space $\cM$ is simple: $\pi_1(\cM)$ is abelian and acts trivially on the higher homotopy groups.
Write $BZ=|[n]\mapsto Z^n|$ for the classifying space of the topological monoid $Z$.
For a grouplike $A_\infty$-space $Z$, $BZ$ denotes a
delooping of its specified $A_\infty$ structure,
understood up to based homotopy equivalence.
\phantomsection\label{pre:bar-loop}
The bar--loop comparison for connected, well-pointed monoids of CW type gives an $A_\infty$ equivalence $\cM\simeq_{A_\infty}\Omega B\cM$ with respect to the specified product \cite[Propositions~4.17(1), 5.4 and Theorem~5.6]{Vogt2015}.
Moreover, the two-cell $(x,y)$ of $\mathcal{R}_X$ gives the relation $e_xe_y=e_ye_{x*y}$.
Using the group homomorphism $\As(X)\to G$ defined by $e_x\mapsto\eta(x)$, we therefore obtain from \eqref{pre:associated-group} and covering space theory that
\begin{equation}\label{pre:fundamental-cover}
 \pi_1(\mathcal{R}_X)=\As(X),\qquad
 \pi_1(\cM)=\ker(\As(X)\to G).
\end{equation}

For every $n\ge2$, the action of $\pi_1(\mathcal{R}_X)=\As(X)$ on $\pi_n(\mathcal{R}_X)$ is trivial \cite[Proposition~5.2]{FRSRack}.
This also follows from the Clauwens monoid structure on the universal cover $B(\As(X);X)$ obtained by taking augmentation group $\As(X)$ and augmentation $x\mapsto e_x$ \cite[Proposition~2.16]{Clauwens}.
Indeed, deck transformations are left multiplications by vertices, and left multiplication along a path from the unit to a vertex gives a homotopy to the identity compatible with change of basepoint.

\begin{definition}\label{pre:good-prime}
When $H$ is finite and a prime $p$ satisfies $p\nmid |H|$, we say that $p$ is \emph{good for the chosen transitive augmentation}.
\end{definition}

\section{An integral model with its classifying map}
\label{int:section}

We use the generating transitive augmentation and the notation $G,H,h,i,\cM$ of Section~\ref{sec:preliminaries}, without imposing finiteness.

Define group homomorphisms
\begin{equation}\label{int:two-maps}
 a:H\times\Z\longrightarrow G,\quad a(u,m)=i(u),
 \qquad
 b_h:H\times\Z\longrightarrow H,\quad b_h(u,m)=uh^m.
\end{equation}
The condition $h\in Z(H)$ ensures that $b_h$ is a group homomorphism.
Using the induced maps $Ba,Bb_h$ on classifying spaces, define the homotopy pushout $Y$ and the inclusion $j$ by
\begin{equation}\label{int:Y-definition}
 Y=\hocolim\bigl[BG\xleftarrow{Ba}B(H\times\Z)
                         \xrightarrow{Bb_h}BH\bigr],
 \qquad j:BG\longrightarrow Y.
\end{equation}
We take the double mapping cylinder as the model for $Y$, marked by the inclusion $j$ at the left endpoint; see Figure~\ref{fig:double-mapping-cylinder}.

\begin{figure}[htbp]
\centering
\begin{tikzpicture}[>=Stealth,line cap=round,line join=round,
  every node/.style={font=\small}]
 \fill[black!4] (-2.65,-.85) rectangle (2.65,.85);
 \draw[line width=.7pt] (-2.65,.85)--(2.65,.85)
   (-2.65,-.85)--(2.65,-.85);
 \draw[line width=.7pt,fill=white] (-2.65,0) ellipse (.23 and .85);
 \draw[line width=.7pt,fill=white] (2.65,0) ellipse (.23 and .85);
 \node[font=\normalsize] at (0,0) {$A\times[0,1]$};
 \node[below=2pt] at (-2.65,-.85) {$A\times\{0\}$};
 \node[below=2pt] at (2.65,-.85) {$A\times\{1\}$};
 \node[draw,rounded corners=3pt,minimum width=1.35cm,
   minimum height=.85cm,fill=white,font=\normalsize] (g) at (-5.5,0) {$BG$};
 \node[draw,rounded corners=3pt,minimum width=1.35cm,
   minimum height=.85cm,fill=white,font=\normalsize] (h) at (5.5,0) {$BH$};
 \draw[->,line width=.7pt] (-2.95,.25)--node[above] {$Ba$}
   (g.east |- 0,.25);
 \draw[->,line width=.7pt] (2.95,.25)--node[above] {$Bb_h$}
   (h.west |- 0,.25);
\end{tikzpicture}
\caption{Schematic double mapping cylinder $Y$, with
$A=B(H\times\Z)$. The arrows indicate the attachment maps; the
marking $j:BG\hookrightarrow Y$ is the inclusion of the $BG$
component.}
\label{fig:double-mapping-cylinder}
\end{figure}

\begin{theorem}[Integral model]\label{int:main}
Let $X$ be any connected discrete quandle with its canonical augmentation, or, more generally, with a transitive augmentation as specified above.
The double mapping cylinder $Y$ in \eqref{int:Y-definition} is a simply connected CW complex, and there is a based homotopy equivalence under $BG$
\[
 B \cM\simeq Y.
\]
Here the continuous map $BG\to B\cM$ is induced by the inclusion of the vertex group $G\subset \cM=B(G;X)$.
Consequently, there is an equivalence of grouplike $A_\infty$-spaces
\begin{equation}\label{int:loop-model}
 \cM\simeq_{A_\infty}\Omega Y.
\end{equation}
For any classifying map $\gamma:\mathcal{R}_X\to BG$ of the covering $\cM\to \mathcal{R}_X$, there is an equivalence in the homotopy category of spaces over $BG$
\begin{equation}\label{int:rack-fibre}
 (\mathcal{R}_X,\gamma)\simeq
 \bigl(\hofib(j:BG\to Y),\ \hofib(j)\longrightarrow BG\bigr).
\end{equation}
If $G$ is finite, then $Y$ admits a CW model of finite type.
\end{theorem}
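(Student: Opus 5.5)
The plan is to compute $B\cM$ directly from a cell decomposition of $\cM$ into ``words'', and to compare it with $Y$ by means of a homotopy pushout of classifying spaces.

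\textbf{Step 1: $\cM$ as a free product.} Because $X\cong H\backslash G$, every cube $[g;t_1,x_1,\ldots,t_n,x_n]$ can be rewritten, using \eqref{pre:monoid-product}, as an ordered product of vertices and one-cubes $[1;t,x_0]$ conjugated by vertices: write $x_i=x_0^{g_i}$ and move the $g_i$ into vertex factors. I expect the result to be an identification of $\cM$, as a topological monoid up to $A_\infty$ equivalence, with a strict pushout of monoids
\[
 \cM\simeq G *_{H} \bigl(H\times J\bigr),
\]
where $J=B(*;\{x_0\})$ is the free monoid on a single edge $[1;t,x_0]$. The face relation $d^1$ of this edge identifies its endpoint $1$ with $h$. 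Consequently the edge is a path from $1$ to $h$ commuting with $H$, and the centralizing piece $H\times J$ is equivalent to the group $H\times\Z$ glued to $H$ along $b_h$ and to $G$ along $a$. Concretely, I will exhibit $\cM$ as the monoid colimit of $G\leftarrow H\times\Z\rightarrow H$ up to equivalence. The $\Z$ factor comes from the free loop generated by the edge $[1;t,x_0]$ after $1$ and $h$ are identified through $H$.

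\textbf{Step 2: apply $B$.} The functor $B$ takes suitable (cofibrant, well-pointed) homotopy pushouts of monoids to homotopy pushouts of spaces. This gives
\[
 B\cM\simeq\hocolim\bigl[BG\leftarrow B(H\times\Z)\rightarrow BH\bigr]=Y,
\]
and it is compatible with the inclusion $G\subset\cM$, so the equivalence is under $BG$. Simple connectivity of $Y$ then follows from van Kampen: $\pi_1Y$ is $G*_{H\times\Z}H$, which collapses to $G/\langle\!\langle h\rangle\!\rangle$. This quotient is trivial because the normal closure of $h$ is $G$ by the generation hypothesis. Combining the equivalence with the bar--loop comparison \cite{Vogt2015} gives \eqref{int:loop-model}. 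Finite type for finite $G$ holds because $BG$, $BH$ and $B(H\times\Z)\simeq BH\times S^1$ all have finite-type models and the double mapping cylinder preserves this.

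\textbf{Step 3: the fibre.} The covering $\cM\to\mathcal R_X=G\backslash\cM$ realizes $\mathcal R_X\simeq EG\times_G\cM$, where $G$ acts by left multiplication through the subgroup inclusion $G\subset\cM$. For a grouplike monoid with a subgroup, this Borel construction is $\hofib(BG\to B\cM)$, since $\cM$ acts freely and is equivalent to $\Omega B\cM$. The map to $BG$ classifying the covering is exactly the fibre projection. Transporting along Step 2 gives \eqref{int:rack-fibre}. Uniqueness up to homotopy of classifying maps over $BG$ handles an arbitrary choice of $\gamma$.

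\textbf{Main obstacle.} The hard part is Step 1, namely making the amalgamated-free-product description precise as an $A_\infty$ or homotopy-pushout statement. The realization has cubes whose faces $d^1$ twist by the action of $G$, so the ``words'' description must respect these identifications. My first attempt would be to filter $\cM$ by word length in $x$-letters and check that each filtration quotient matches the corresponding stage of the bar construction of the pushout. An alternative is to compare cellular chains of universal covers after proving $\pi_1$ agreement, and then conclude by Whitehead.
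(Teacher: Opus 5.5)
Your peripheral steps are sound. The van Kampen computation $\pi_1Y\cong G/\langle\!\langle h\rangle\!\rangle=1$ is correct; the paper instead reads it off from $\pi_1Y\cong\pi_0\cM$. The finite-type remark matches the paper. Step~3 is a standard two-sided bar or quasifibration argument for grouplike $\cM$; the paper runs it with a topological group $\mathcal E$ in place of $\cM$, which avoids that issue.

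The genuine gap is Step~1, which carries the entire theorem and is only asserted. Taken literally it is false: the colimit of $G\xleftarrow{a}H\times\Z\xrightarrow{b_h}H$ in monoids is the discrete group $G/\langle\!\langle h\rangle\!\rangle$, which is trivial. Read as a homotopy pushout of simplicial groups or topological monoids, it is, via the loop-group/classifying-space equivalence, just a restatement of $B\cM\simeq Y$ under $BG$. So Step~2 adds nothing and all the content sits in Step~1.

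Your intermediate claim $\cM\cong G*_HM_0$ is plausible from normal forms of cubes. Here $M_0$ is the submonoid generated by $H$ and the path $\varepsilon(t)=[1;t,x_0]$ from $1$ to $h$; it is $B(H;\{x_0\})$ with $\eta(x_0)=h$, whereas $B(*;\{x_0\})$ is $J(S^1)$, whose edge is a loop. Even granting this, you would still need two unproved facts:
\begin{itemize}
\item \emph{Amalgam to homotopy pushout.} $B$ must take this amalgam of topological monoids over the discrete group $H$ to the homotopy pushout $BG\cup_{BH}BM_0$. This requires freeness and normal-form input you have not supplied.
\item \emph{Twisted James construction.} You need $BM_0\simeq\hocolim[BH\leftarrow BH\times S^1\rightarrow BH]$, along the projection and the action of $h$.
\end{itemize}
The filtration and Whitehead strategies you list address neither.

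The idea you are missing is how the paper avoids amalgams of topological monoids altogether. It passes to Mostovoy's simplicial group $\mathcal E_\bullet$, which receives a strict monoid homomorphism $\cM\to\mathcal E=|\mathcal E_\bullet|$ fixing $G$ that is a homotopy equivalence. There the edge becomes a free generator $u_0(x_0)\in\mathcal E_1$ with $d_0u_0(x_0)=1$ and $d_1u_0(x_0)=h$; this is where your heuristic $\Z$ genuinely lives.

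Transitivity then gives levelwise isomorphisms of \emph{discrete} groups $\mathcal E_n\cong G*_H(H\times\mathsf F_n)$. The classical amalgam theorem yields $B\mathcal E_n\simeq BG\cup_{BH}A_0\cup\cdots\cup A_{n-1}$ with $A=B(H\times\Z)$, compatibly with the simplicial operators. Segal's realization lemma gives $|R_\bullet|\simeq B\mathcal E$ under $BG$, and $|R_\bullet|$ is $Y$ with the section cylinder $BH\times[0,1]$ collapsed. This levelwise reduction to discrete-group amalgams is the step your proposal needs and does not have.
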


An equivalence over $BG$ means a zigzag of maps over $BG$ whose underlying maps are homotopy equivalences.

The proof may be skipped on a first reading: subsequent sections use the model $(Y,j)$ and the conclusions of the theorem, but not the auxiliary constructions in its proof.

\begin{proof}[Proof of Theorem~\ref{int:main}]
We first describe the simplicial group used in the comparison.
For $n\ge0$, let $\mathcal E_n$ be the discrete group generated by $G$ and $u_j(x)$, $x\in X$, $0\le j<n$, subject only to the relations
\begin{equation}\label{int:E-presentation}
 \text{the relations of $G$},\qquad
 g^{-1}u_j(x)g=u_j(x^g).
\end{equation}
In particular, $\mathcal E_0=G$.
The face maps $d_k:\mathcal E_n\to\mathcal E_{n-1}$ ($n\ge1$) and degeneracy maps $s_k:\mathcal E_n\to\mathcal E_{n+1}$ ($n\ge0$), $0\le k\le n$, fix $G$ and satisfy
\[
 d_k u_j(x)=
 \begin{cases}
  1,&k=j=0,\\
  u_{j-1}(x),&k\le j,\ j>0,\\
  u_j(x),&k>j,\ j<n-1,\\
  \eta(x),&k=n,\ j=n-1,
 \end{cases}
 \qquad
 s_k u_j(x)=
 \begin{cases}
  u_{j+1}(x),&k\le j,\\
  u_j(x),&k>j.
 \end{cases}
\]
Equivariance of $\eta$ ensures that these formulas preserve the relations.
To see the simplicial identities, index $u_j(x)$ by the nonconstant simplex of the standard simplicial interval $\Delta[1]$ that is $0$ on $0,\ldots,j$ and $1$ on $j+1,\ldots,n$; the constant simplices at $1$ and $0$ represent $1$ and $\eta(x)$, respectively.
The formulas are then induced by the simplicial operators of $\Delta[1]$.
They define a simplicial group $\mathcal E_\bullet$.
Write
\[
 \mathcal E=|\mathcal E_\bullet|
\]
for its geometric realization.
The degreewise multiplication induces a topological group structure, and the constant simplicial subgroup $G$ gives the inclusion $G\hookrightarrow\mathcal E$.
The simplicial group $\mathcal E_\bullet$ is the one constructed in \cite[Section~2.1]{Mostovoy} for the augmentation extended to the free group on $X$.
The canonical group-completion comparison of \cite[Theorem~3.1 and its proof]{Mostovoy} gives a continuous strict monoid homomorphism $\cM\to\mathcal E$ that is a homotopy equivalence and fixes $G$.
Our edge labels $(g,x)$ correspond to $(x^{g^{-1}},g)$ in that construction, preserving the vertices and their left and right actions.
For $x\in X$ and $t\in[0,1]$, put $\varepsilon_x(t)=[1;t,x]$.
The cubical product gives
\[
 \varepsilon_x(0)=1,\qquad \varepsilon_x(1)=\eta(x),\qquad
 \varepsilon_x(t)k=k\varepsilon_{x^k}(t)\quad(k\in G),
\]
and
\[
 [g;t_1,x_1,\ldots,t_n,x_n]
 =g\varepsilon_{x_1}(t_1)\cdots\varepsilon_{x_n}(t_n).
\]
These identities recover both face maps in \eqref{pre:faces0}--\eqref{pre:faces1}.
Under the indicated identification of edges and vertex actions, the cubical product agrees with the product in Mostovoy's model.

Let $\mathsf F_n$ be the free group on $v_0,\ldots,v_{n-1}$, with $\mathsf F_0=1$.
Transitivity gives an isomorphism
\begin{equation}\label{int:amalgam-map}
 G*_H(H\times\mathsf F_n)\xrightarrow{\cong}\mathcal E_n,
 \qquad g\longmapsto g,\quad v_j\longmapsto u_j(x_0).
\end{equation}
Indeed, $H$ commutes with $u_j(x_0)$, and the inverse fixes $G$ and sends $u_j(x_0^g)$ to $g^{-1}v_jg$.
This is independent of the representative because $v_j$ commutes with $H$.

Use nerve models for classifying spaces, and put $A=B(H\times\Z)$.
The section $s:H\to H\times\Z$, $s(u)=(u,0)$, identifies $BH$ with a CW subcomplex of $A$.
Define
\begin{equation}\label{int:R-definition}
 R_n=BG\cup_{BH}A_0\cup_{BH}\cdots\cup_{BH}A_{n-1},
 \qquad R_0=BG,
\end{equation}
where each $A_j$ is a copy of $A$, attached along $BH$ by $Bs$ and $Bi$.
Index $A_j$ by the same nonconstant simplex of $\Delta[1]_n$ as $u_j(x_0)$.
The simplicial operators are induced by those of $\Delta[1]$, with the constant simplices at $1$ and $0$ mapped to $BG$ by $Ba$ and $Bi\circ Bb_h$, respectively; all operators fix $BG$.
These maps agree on $BH$ since $a\circ s=i$ and $b_h\circ s=\id_H$.

The homomorphisms $(u,m)\mapsto i(u)u_j(x_0)^m$ from $H\times\Z$ to $\mathcal E_n$, together with $G\hookrightarrow\mathcal E_n$, induce maps $R_n\to B\mathcal E_n$ fixing $BG$.
They are homotopy equivalences by the classifying-space theorem for amalgamated products \cite[Theorem~1B.11 and Example~1B.12]{Hatcher}, applied to the injections $H\hookrightarrow G$ and $H\hookrightarrow H\times\Z$, together with \eqref{int:amalgam-map}.
The CW inclusions in \eqref{int:R-definition} make the ordinary pushout a homotopy pushout.
The maps form a simplicial map by the displayed formulas, including $d_0u_0(x_0)=1$ and $d_nu_{n-1}(x_0)=h$.
The degeneracy maps of $R_\bullet$ fix $BG$ and embed each source copy of $A$ as a target copy, while those of $B\mathcal E_\bullet$ are induced by split injections of groups.
Thus, in both simplicial spaces, the degeneracy images and their unions are CW subcomplexes.
Segal's realization theorem \cite[Appendix~A, Proposition~A.1(ii),(iv)]{Segal1974} and compatibility with finite products \cite[Theorem~11.5]{May} therefore give
\[
 |R_\bullet|\simeq|B\mathcal E_\bullet|\cong B\mathcal E
\]
under $BG$.

Put $I=[0,1]$.
The realization $|R_\bullet|$ is obtained from $Y$ by collapsing each interval of the section cylinder $BH\times I$ determined by $Bs$ to its endpoint in $BG$; here the $BG$ endpoint corresponds to vertex $1$ of $\Delta[1]$.
This cylinder is a CW subcomplex, and its projection onto $Bi(BH)$ is a homotopy equivalence.
The gluing lemma gives a homotopy equivalence $Y\to|R_\bullet|$ fixing $BG$.
Since $\cM$ and $\mathcal E$ are well-pointed monoids of CW type, the comparison $\cM\to\mathcal E$ also induces a homotopy equivalence $B\cM\to B\mathcal E$.
We therefore obtain
\[
 Y\xrightarrow{\simeq}B\mathcal E
   \xleftarrow{\simeq}B\cM.
\]
Since the inclusions of $BG$ into $Y$ and $B\mathcal E$ are CW cofibrations, the first map admits a homotopy inverse under $BG$.
Thus $B\cM\simeq Y$ under $BG$.
The \hyperref[pre:bar-loop]{bar--loop comparison of Section~\ref{sec:preliminaries}} gives \eqref{int:loop-model}; since $\cM$ is connected, $\pi_1Y\cong\pi_0\cM=0$.
The nerve maps are cellular, so the defining double mapping cylinder is a CW complex.

Let $p_{EG}:EG\to BG$ be a universal principal right $G$-bundle.
For a left $G$-space $Z$, write $EG\times_G Z$ for the quotient of $EG\times Z$ by $(\xi g,z)\sim(\xi,gz)$, with projection $[\xi,z]\mapsto p_{EG}(\xi)$ to $BG$.
For the specified classifying map $\gamma:\mathcal{R}_X\to BG$, choose a classifying bundle map $e:\cM\to EG$ satisfying
\[
 e(gm)=e(m)g^{-1},\qquad p_{EG}e(m)=\gamma([m]).
\]
Then $[m]\mapsto[e(m),m]$ defines a map $\mathcal{R}_X\to EG\times_G\cM$ over $BG$.
Its left inverse is the bundle projection to $\mathcal{R}_X$, with contractible fiber $EG$; since $\mathcal{R}_X$ is a CW complex, this projection and hence the map from $\mathcal{R}_X$ are homotopy equivalences.
The $G$-equivariant comparison $\cM\to\mathcal E$ induces a map of associated bundles over $BG$ that is a homotopy equivalence on every fiber, hence on total spaces.
We obtain equivalences
\[
 \mathcal{R}_X\simeq EG\times_G\cM
   \simeq EG\times_G\mathcal E
   \simeq\hofib(BG\to B\mathcal E)
   \simeq\hofib(j)
\]
over $BG$.
Here $EG\times_G\mathcal E$ is the principal $\mathcal E$-bundle induced by $G\hookrightarrow\mathcal E$, and the last equivalence uses the marked comparison with $Y$.
This proves \eqref{int:rack-fibre}.

Finally, if $G$ is finite, then $H$ is finite.
Finite-type CW models for $BG$, $BH$, and $B(H\times\Z)\simeq BH\times S^1$, together with cellular representatives of the two maps, give a finite-type CW model for $Y$.
\end{proof}

\section{Localization, completion, and classification of local monoids}
\label{loc:section}

We use the transitive augmentation and marked delooping $Y$ of Theorem~\ref{int:main}.
For the moment, no finiteness assumption is imposed.
Let $L$ be the cofiber of the cellular map $Bi$ induced by the inclusion homomorphism $i$, and let $\ell$ be the cellular quotient map:
\begin{equation}\label{loc:L-definition}
 L=\Cofib(BH\xrightarrow{Bi}BG),\qquad
 \ell:BG\longrightarrow L.
\end{equation}
We construct these maps using the nerves of groups and take the quotient $L=BG/BH$ by the CW subcomplex $BH\subset BG$.

This homotopy cofiber was used for Alexander quandles in \cite[Section~4.2, Definition~4.6, Proposition~4.7]{Clauwens}, where $H$ is the linear cyclic subgroup.
Here we use the full point stabilizer and compare the cofiber with the marked delooping.

\begin{lemma}\label{loc:L-properties}
The space $L$ is simply connected.
If $R$ is a localization of $\Z$ and $\widetilde H_*(BH;R)=0$, then $\ell$ induces an isomorphism in homology with coefficients in $R$.
In particular, this condition holds if $H$ is finite and $|H|$ is invertible in $R$.

If $G$ is finite, then $L$ admits a CW model with finitely many cells in each dimension, $\widetilde H_*(L;\Q)=0$, and $\pi_n(L)$ is finite for every $n\ge2$.
\end{lemma}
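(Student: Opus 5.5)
Simple connectivity comes from van Kampen applied to the cofiber. Since $L=BG/BH$ is the mapping cone of $Bi$ up to homotopy and $BH\subset BG$ is a CW subcomplex, $\pi_1(L)\cong \pi_1(BG)/\langle\!\langle \pi_1(BH)\rangle\!\rangle = G/\langle\!\langle H\rangle\!\rangle$. Section~\ref{sec:preliminaries} notes that the generation hypothesis forces the normal closure of $H$ (which contains $h$) to be all of $G$. Hence $\pi_1(L)=1$. Path-connectedness is clear, since $BG$ is connected and the cone is attached to it.

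For the homology statement we use the long exact sequence of the pair $(BG,BH)$ with coefficients in $R$, together with $\widetilde H_*(L;R)\cong H_*(BG,BH;R)$. In that sequence, $\widetilde H_*(BH;R)=0$ makes $\widetilde H_*(BG;R)\to H_*(BG,BH;R)$ an isomorphism. In degree $0$ both sides are $R$ via the augmentation, so the claim holds there as well. For $H$ finite with $|H|$ invertible in $R$, transfer shows that multiplication by $|H|$ annihilates $\widetilde H_*(BH;\Z)$, and it also annihilates the torsion groups $\operatorname{Tor}(H_{*-1}(BH;\Z),R)$. Because $R$ is flat over $\Z$, the Tor term is actually zero, and $\widetilde H_*(BH;R)\cong \widetilde H_*(BH;\Z)\otimes R=0$.

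Now let $G$ be finite. The bar construction gives nerve models whose $n$-skeleton has finitely many cells, namely $|G|^n$ nondegenerate-or-not simplices. The quotient $BG/BH$ therefore has finitely many cells in each dimension. Rationally, applying the previous paragraph with $R=\Q$ identifies $\widetilde H_*(L;\Q)$ with $\widetilde H_*(BG;\Q)$, and this vanishes because $G$ is finite.

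Finiteness of homotopy groups is the step needing an extra argument. Since $L$ is simply connected and of finite type, its integral homology groups are finitely generated, and they are rationally trivial, so each $\widetilde H_n(L;\Z)$ is finite. Serre's mod-$\mathcal C$ Hurewicz theorem for the class of finite abelian groups, valid for simply connected spaces, then gives that $\pi_n(L)$ is finite for all $n\ge2$. One could alternatively use the Postnikov tower and induction with the Serre spectral sequence. Either route relies only on finite type and simple connectivity, both established above.
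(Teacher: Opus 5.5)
Your proposal is correct and follows the paper's proof: van Kampen together with the normal-closure condition on $H$, the pair (cofiber) sequence combined with transfer, and the finite nerve cell count. The only variation is the last step. You first show that $\widetilde H_*(L;\Z)$ is finite and then apply the mod-$\mathcal{C}$ Hurewicz theorem for finite abelian groups, whereas the paper applies Serre's finite generation theorem to $\pi_n(L)$ and uses $L_{\Q}\simeq *$ to conclude that these groups are torsion. Both are standard Serre-class arguments.
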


\begin{proof}
Since the conjugates of $h\in H$ generate $G$, van Kampen's theorem gives $\pi_1(L)=G/\langle\!\langle H\rangle\!\rangle=1$.
The homological assertion follows from the cofiber exact sequence and transfer.
If $G$ is finite, the quotient of nerves $BG/BH$ has finitely many cells in each dimension, and Serre's finite generation theorem \cite[Theorem~4.5.4]{MayPonto} implies that each $\pi_n(L)$ is finitely generated.
Moreover, $L$ is simply connected and rationally acyclic, so $L_{\Q}\simeq *$; these groups are therefore torsion and hence finite.
\end{proof}

\begin{theorem}\label{loc:main}\label{loc:comparison}
There is a based map $\Phi:Y\to L\vee S^2$ such that the composite $\Phi j$ agrees with $BG\xrightarrow{\ell}L\hookrightarrow L\vee S^2$ up to based homotopy.
For any localization $R\subset\Q$ of $\Z$, if $\widetilde H_*(BH;R)=0$, then $\Phi$ induces an isomorphism in homology with coefficients in $R$ and a based homotopy equivalence
\begin{equation}\label{loc:Y-equivalence}
 Y_R\simeq(S^2\vee L)_R.
\end{equation}
This is an equivalence under $BG$ with respect to $BG\xrightarrow{j}Y\to Y_R$ and $BG\xrightarrow{\ell}L\hookrightarrow S^2\vee L\to(S^2\vee L)_R$.
If $\cM_R$ is equipped with the localized $A_\infty$ structure represented by $\Omega(Y_R)$, there is also an $A_\infty$ equivalence
\begin{equation}\label{loc:M-equivalence}
 \cM_R\simeq_{A_\infty}\Omega\bigl((S^2\vee L)_R\bigr).
\end{equation}
In particular, when $H$ is finite, this applies to $R=\Z_{(p)}$ for $p\nmid|H|$ and to $R=\Z[1/|H|]$.
Neither $G$ nor $X$ need be finite.
\end{theorem}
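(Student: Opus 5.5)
Construct $\Phi$ directly from the double mapping cylinder. Write $Y=BG\cup(A\times[0,1])\cup BH$ with $A=B(H\times\Z)\simeq BH\times S^1$. Since $Bb_h$ lands in $BH$, composing with $Bi$ and $\ell$ sends the $BH$ end to the basepoint of $L=BG/BH$. On the $BG$ end, $\ell\circ Ba$ factors through $\ell\circ Bi$, which is constant.

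So $\ell$ on $BG$, together with the constant map on the cylinder and on $BH$, gives a map $Y\to L$. To be careful, first collapse the right end: the quotient $Y/BH$ is $BG\cup CA'$, where $A'$ is the cylinder with its $BH$ end crushed.

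We still need an $S^2$ summand. The $\Z$ factor gives a loop $S^1\subset A$, namely the nerve of $\{1\}\times\Z$ restricted to the generator. Its image under $Ba$ is trivial. Its image under $Bb_h$ is the loop $h$ in $BH$, which is null in $L$ because the $BH$ end gets crushed.

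Concretely, define $\Phi$ as the composite
\[
Y\to Y/(BH\cup Bi(BH)\text{-part})\simeq L\vee\Sigma A_{+}\text{-piece}\to L\vee S^2,
\]
where the second map collapses $Y$ onto the suspension of the $S^1$ coordinate. More precisely, use the cofiber sequence $A\to BG\sqcup BH\to Y\to\Sigma A\to\cdots$. Then take $\Phi=(\Phi_L,\Phi_S)$, combined via a coaction $Y\to Y\vee\Sigma A$ (pinching the middle of the cylinder). Here $\Phi_L$ is the map to $L$ above and $\Phi_S$ is the composite $\Sigma A\to\Sigma S^1=S^2$ induced by the projection $H\times\Z\to\Z$. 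The pinch is the identity on the $BG$ end, so $\Phi j\simeq$ the inclusion of $\ell$.

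Next, check homology with $R$ coefficients when $\widetilde H_*(BH;R)=0$. The Mayer–Vietoris sequence for $Y$ has terms
\[
H_*(A;R)\cong H_*(S^1;R)\quad\text{(Künneth)},\qquad H_*(BH;R)=R.
\]
The map $H_1(A;R)=R\to H_1(BG;R)\oplus H_1(BH;R)$ is zero. Hence
\[
\widetilde H_*(Y;R)\cong\widetilde H_*(BG;R)\oplus R[2],
\]
and $\widetilde H_*(BG;R)\cong\widetilde H_*(L;R)$ by Lemma~\ref{loc:L-properties}. I would verify that $\Phi_*$ realizes exactly this splitting. On the $BG$ summand it is $\ell_*$, which is an isomorphism by Lemma~\ref{loc:L-properties}. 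On the degree-2 class, the boundary $H_2(Y)\to H_1(A)$ composed with the projection to the $\Z$ coordinate is the identity.

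The main obstacle is bookkeeping the pinch/coaction so that the two components genuinely assemble into a map whose homology is block-triangular with isomorphisms on the diagonal. I would handle this by working in the reduced Mayer–Vietoris/cofiber long exact sequences and checking naturality rather than constructing explicit homotopies.

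Now pass to localizations. $Y$ is simply connected by Theorem~\ref{int:main}, and $L\vee S^2$ is simply connected by Lemma~\ref{loc:L-properties}. So an $R$-homology isomorphism between simply connected spaces induces an equivalence $Y_R\simeq(S^2\vee L)_R$, by the standard localization theory of nilpotent spaces. The compatibility under $BG$ is naturality of localization applied to $\Phi j\simeq\ell$.

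Finally, apply $\Omega$. By Theorem~\ref{int:main}, $\cM\simeq_{A_\infty}\Omega Y$, and by definition the localized $A_\infty$ structure on $\cM_R$ is $\Omega(Y_R)$. Looping the based equivalence gives an equivalence of loop spaces, which is an $A_\infty$ equivalence; this yields \eqref{loc:M-equivalence}. For the last sentence, the transfer gives $\widetilde H_*(BH;R)=0$ when $|H|$ is invertible in $R$.
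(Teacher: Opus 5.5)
Your proof is essentially the paper's: $\Phi$ is assembled from $\ell$ on $BG$, the constant map on $BH$, and the projection $Br$ onto the $\Z$ coordinate on the cylinder; $\Phi$ is then shown to be an $R$-homology isomorphism by Mayer--Vietoris; and the simply connected localization theorem and looping finish the argument. Your explicit computation of $\widetilde H_*(Y;R)$ with a block-triangularity check is a valid variant of the paper's termwise comparison of the two Mayer--Vietoris sequences via the five lemma.

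The one step to repair is the construction of $\Phi$. Pinching the middle of the cylinder gives $C_{Ba}\vee C_{Bb_h}$, not a coaction $Y\to Y\vee\Sigma A$. Instead, define $\Phi$ as the map of double mapping cylinders induced by the strictly commuting map of spans from $BG\xleftarrow{Ba}B(H\times\Z)\xrightarrow{Bb_h}BH$ to $L\xleftarrow{*}B\Z\xrightarrow{*}*$, whose homotopy pushout is $L\vee\Sigma B\Z\simeq L\vee S^2$. This is close to your first collapse description. It makes $\Phi j=\ell$ hold strictly and the Mayer--Vietoris naturality automatic, which resolves the bookkeeping you flagged.
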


\begin{proof}
Let $r:H\times\Z\to\Z$ be the second projection.
The following diagram of spans commutes strictly:
\[
\begin{array}{ccccc}
 BG&\xleftarrow{Ba}&B(H\times\Z)&\xrightarrow{Bb_h}&BH\\
 {\scriptstyle \ell}\downarrow&&{\scriptstyle Br}\downarrow&&\downarrow\\
 L&\xleftarrow{\ *\ }&B\Z&\xrightarrow{\ *\ }&*
\end{array}
\]
Indeed, $Ba$ factors through $BH$, whose image is collapsed by $\ell$.
Taking homotopy pushouts gives $\Phi:Y\to L\vee\Sigma B\Z\simeq L\vee S^2$ with the specified restriction.
By Lemma~\ref{loc:L-properties} and the K\"unneth theorem, all three vertical maps induce isomorphisms in homology with coefficients in $R$.
Here we use $B(H\times\Z)\simeq BH\times B\Z$ and the freeness of $H_*(B\Z;R)$.
Comparison of the Mayer--Vietoris exact sequences therefore shows that $\Phi$ induces an isomorphism in homology with coefficients in $R$.

Since $Y$ and $S^2\vee L$ are simply connected, the localization theorem \cite[Theorem~6.1.2]{MayPonto} gives \eqref{loc:Y-equivalence}.
The restriction of $\Phi$ and naturality of localization show that this is an equivalence under $BG$.
The required homotopy-commutative diagrams can be made strictly commutative using mapping cylinders.
Theorem~\ref{int:main} and localization of loop spaces \cite[Proposition~6.2.4]{MayPonto} give $\cM_R\simeq_{A_\infty}\Omega(Y_R)$, so looping \eqref{loc:Y-equivalence} yields \eqref{loc:M-equivalence}.
\end{proof}

\subsection{Identifying the localized homotopy cofiber}
\label{loc:completion-subsection}

In this subsection, $G$ is a finite group; hence $H$ and $X$ are also finite.
Since $BG$ need not be nilpotent, we use its Bousfield--Kan $p$-completion $BG_p^\wedge$ rather than its localization.

\begin{proposition}\label{loc:completion}\label{loc:completed-group-model}
If $G$ is finite and $p\nmid |H|$, there is a homotopy equivalence
\begin{equation}\label{loc:L-completion}
 L_{(p)}\simeq BG_p^\wedge.
\end{equation}
It may be chosen as an equivalence in the homotopy category under $BG$ with respect to $BG\xrightarrow{\ell}L\to L_{(p)}$ and the completion map $BG\to BG_p^\wedge$.
Moreover, there is an $A_\infty$ equivalence
\begin{equation}\label{loc:completed-group-formula}
 \cM_{(p)}\simeq_{A_\infty}\Omega\bigl((S^2\vee BG_p^\wedge)_{(p)}\bigr)
\end{equation}
and if $p\nmid |G|$, then $\cM_{(p)}\simeq_{A_\infty}\Omega(S^2_{(p)})$.
\end{proposition}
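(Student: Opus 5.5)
The plan is to compare the completion map $BG\to BG_p^\wedge$ with the localized quotient map $BG\xrightarrow{\ell}L\to L_{(p)}$, using the universal property of the Bousfield--Kan completion with respect to $H_*(-;\F_p)$-equivalences. First I note the homological input. Lemma~\ref{loc:L-properties} with $R=\Z_{(p)}$ shows that $\ell$ is an $H_*(-;\Z_{(p)})$-isomorphism, hence also an $H_*(-;\F_p)$-isomorphism. The same lemma shows that $L$ is simply connected, of finite type, with finite homotopy groups; in particular $\widetilde H_*(L;\Q)=0$. Consequently the homotopy groups of $L_{(p)}$ are finite $p$-groups. A simply connected space with finite $p$-group homotopy groups is $p$-complete, so $L_{(p)}\simeq L_p^\wedge$. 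One can check this via the arithmetic square, or by observing that the Postnikov tower consists of principal fibrations with fibers $K(\text{finite }p\text{-group},n)$.

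Next I apply completion to $\ell$. Because $\ell$ is an $\F_p$-homology equivalence, $\ell_p^\wedge:BG_p^\wedge\to L_p^\wedge$ is a homotopy equivalence by Bousfield--Kan. This needs no nilpotence of $BG$, since the $\F_p$-completion inverts exactly the mod-$p$ homology equivalences. Combining this with the natural square
\[
 \begin{array}{ccc}
 BG&\xrightarrow{\ell}&L\\
 \downarrow&&\downarrow\\
 BG_p^\wedge&\xrightarrow{\ell_p^\wedge}&L_p^\wedge\simeq L_{(p)}
 \end{array}
\]
gives \eqref{loc:L-completion}. The identification $L_p^\wedge\simeq L_{(p)}$ also has to be made under $L$. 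To do this I compose $L\to L_{(p)}\to (L_{(p)})_p^\wedge$. The second map is an equivalence, because $L_{(p)}$ is already $p$-complete. The composite equals $L\to L_p^\wedge\to(L_{(p)})_p^\wedge$ by naturality, and the map $L_p^\wedge\to(L_{(p)})_p^\wedge$ is an equivalence because $L\to L_{(p)}$ is an $\F_p$-equivalence. This yields the statement in the homotopy category under $BG$.

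The $A_\infty$ statement then follows from \eqref{loc:M-equivalence} with $R=\Z_{(p)}$. That gives $\cM_{(p)}\simeq_{A_\infty}\Omega((S^2\vee L)_{(p)})$. Localization of a wedge of simply connected spaces only depends on the localizations of the summands, since $(S^2\vee L)\to (S^2\vee L_{(p)})$ is a $\Z_{(p)}$-homology equivalence between simply connected spaces. Replacing $L_{(p)}$ by $BG_p^\wedge$ therefore gives \eqref{loc:completed-group-formula}; looping the resulting based equivalence preserves the $A_\infty$ type. If $p\nmid|G|$, then $\widetilde H_*(BG;\F_p)=0$, so $BG_p^\wedge\simeq *$ and the formula reduces to $\Omega(S^2_{(p)})$.

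I expect the main obstacle to be the identification $L_{(p)}\simeq L_p^\wedge$, together with the bookkeeping needed to make every equivalence compatible under $BG$. The point is that $BG$ itself is not nilpotent, so the comparison has to be routed through $L$, which is simply connected, rather than by localizing $BG$ directly. For the $p$-completeness of $L_{(p)}$, I would first invoke the standard fact that nilpotent spaces with finite $p$-group homotopy are $p$-complete (Bousfield--Kan, or May--Ponto); this uses the finiteness of $\pi_n(L)$ from Lemma~\ref{loc:L-properties}.
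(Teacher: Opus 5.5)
Your proposal is correct and follows essentially the paper's route. Both arguments show that $\ell_p^\wedge$ is an equivalence because $\ell$ is a mod-$p$ homology equivalence, identify $L_{(p)}$ with $L_p^\wedge$ compatibly under $L$ using the finiteness of $\pi_*(L)$, and then substitute into Theorem~\ref{loc:main} with $R=\Z_{(p)}$. The only differences are cosmetic. The paper uses the completion exact sequence to see that $L_p^\wedge$ is $p$-local and factors the completion map through $L_{(p)}$, whereas you show $L_{(p)}$ is already $p$-complete and compare through $(L_{(p)})_p^\wedge$. For $p\nmid|G|$, you show $BG_p^\wedge\simeq *$ where the paper shows $L_{(p)}\simeq *$.
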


\begin{proof}
Lemma~\ref{loc:L-properties} and the completion exact sequence \cite[Theorem~2.6]{BarthelBousfield} give $\pi_n(L_p^\wedge)\cong\pi_n(L)\otimes\Z_{(p)}$ for $n\ge2$, and $\pi_1(L_p^\wedge)=0$.
Thus $L_p^\wedge$ is $p$-local, and the completion map factors as $L\to L_{(p)}\xrightarrow{\kappa}L_p^\wedge$.
The map $\kappa$ induces isomorphisms on homotopy groups and is therefore an equivalence.
Since $\ell$ is a mod-$p$ homology equivalence, completion also gives a zigzag of homotopy equivalences
\[
 BG_p^\wedge\xrightarrow[\simeq]{\ell_p^\wedge}
 L_p^\wedge\xleftarrow[\simeq]{\kappa}L_{(p)}
\]
\cite[Chapter~I, Section~5]{BK}.
By naturality of completion and the factorization involving $\kappa$, this is a zigzag under $BG$.

The map $S^2\vee L\to S^2\vee L_{(p)}$ is a homology equivalence with coefficients in $\Z_{(p)}$ between simply connected spaces, so the preceding equivalence and Theorem~\ref{loc:main} give \eqref{loc:completed-group-formula}.
If $p\nmid |G|$, transfer and the cofiber exact sequence give $\widetilde H_*(L;\Z_{(p)})=0$ and hence $L_{(p)}\simeq *$.
\end{proof}

\begin{corollary}[An arithmetic criterion for nontriviality]
\label{loc:arithmetic-nontriviality}
Let $X$ be a finite connected quandle, put $G=\Inn(X)$ and $H=\operatorname{Stab}_G(x_0)$, and assume $p\nmid |H|$.
Then
\begin{equation}\label{loc:arithmetic-triviality-equivalences}
 L_{(p)}\simeq *
 \quad\Longleftrightarrow\quad BG_p^\wedge\simeq *
 \quad\Longleftrightarrow\quad p\nmid |G|
 \quad\Longleftrightarrow\quad p\nmid |X|.
\end{equation}
Consequently, among the good primes, $L_{(p)}$ is
noncontractible exactly when $p$ divides $|X|$.
\end{corollary}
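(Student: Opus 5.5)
The plan is to prove the three equivalences in \eqref{loc:arithmetic-triviality-equivalences} one at a time, each from a result already available.

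\emph{First equivalence.} Proposition~\ref{loc:completion} gives $L_{(p)}\simeq BG_p^\wedge$ whenever $p\nmid|H|$, so $L_{(p)}\simeq *$ holds exactly when $BG_p^\wedge\simeq *$.

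\emph{Second equivalence.} The completion map $BG\to BG_p^\wedge$ is a mod-$p$ homology equivalence, since $BG$ is $\F_p$-good for finite $G$ \cite[Chapter~I, Section~5]{BK}.
\begin{itemize}
\item If $BG_p^\wedge\simeq *$, then $\widetilde H_*(BG;\F_p)=0$. In particular $H_1(BG;\F_p)=0$ and $H^1$ vanishes, and Cauchy/Sylow gives $p\nmid|G|$. To justify this, suppose $p\mid|G|$ and take a Sylow $p$-subgroup $P\neq 1$. Restriction $H^*(G;\F_p)\to H^*(P;\F_p)$ is injective and transfer composes with it to multiplication by the index $[G:P]$, which is prime to $p$. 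So $H^*(G;\F_p)$ is a direct summand of the stable elements in $H^*(P;\F_p)$. To show that it is nonzero in positive degree, the cleanest route is Swan's theorem or Quillen's result on the Krull dimension, which equals the $p$-rank and is at least $1$. Quillen's theorem gives this directly: $H^*(G;\F_p)$ has Krull dimension equal to the $p$-rank, so it is not concentrated in degree $0$.
\item Conversely, if $p\nmid|G|$, then transfer shows $\widetilde H_*(BG;\F_p)=0$, so $BG_p^\wedge\simeq *$.
\end{itemize}
This step, detecting nonvanishing of $\widetilde H^*(G;\F_p)$, is the only one relying on an external theorem. Alternatively, one can argue via an element $g$ of order $p$: the restriction to $C_p=\langle g\rangle$ composed with a suitable Evens norm, or more simply Quillen's theorem, avoids any fusion argument.

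\emph{Third equivalence.} This is pure arithmetic. By transitivity, $|X|=[G:H]$, so $|G|=|X|\,|H|$. Since $p\nmid|H|$, it follows that $p\mid|G|$ if and only if $p\mid|X|$.

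The final sentence of the corollary, that among good primes $L_{(p)}$ is noncontractible exactly when $p$ divides $|X|$, is the contrapositive of the combined chain of equivalences.

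I expect the main obstacle to be the step $BG_p^\wedge\simeq *\Rightarrow p\nmid|G|$. I will cite the classical fact that $H^*(G;\F_p)$ is nonzero in infinitely many degrees when $p\mid|G|$ (Swan/Quillen), rather than reprove it.
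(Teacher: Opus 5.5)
Your proof is correct and follows the paper's: the first and third equivalences come from Proposition~\ref{loc:completion} and $|G|=|H|\,|X|$, and the middle one from transfer in one direction and a classical nonvanishing theorem for $\widetilde H^*(G;\F_p)$ when $p\mid|G|$ in the other. The paper cites Kahn for that theorem and passes through the mod-$p$ equivalence $BG\to L$ rather than through $BG_p^\wedge$, which makes no real difference. You should delete the remark that $H_1(BG;\F_p)=0$ together with Cauchy/Sylow already gives $p\nmid|G|$: it fails for the paper's own tetrahedral case $G=A_4$, $p=2$, and the restriction/stable-element detour does not help either, since only Quillen's (or Swan's) theorem applied to $G$ itself shows that $\widetilde H^*(G;\F_p)\neq 0$.
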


\begin{proof}
The first equivalence follows from Proposition~\ref{loc:completion}.
If $p\nmid |G|$, then $L_{(p)}\simeq *$ by Proposition~\ref{loc:completion}.
Conversely, if $p\mid |G|$, nonvanishing of the mod-$p$ cohomology of finite groups \cite[Proposition~3.3 and its proof]{Kahn1999} gives $\widetilde H^*(BG;\F_p)\ne0$.
By Lemma~\ref{loc:L-properties}, $BG\to L$ is a mod-$p$ cohomology equivalence, so $L_{(p)}$ is noncontractible.
The last equivalence follows from $|G|=|H||X|$ and $p\nmid|H|$.
\end{proof}

\subsection{Classification of local monoids and recovery of the completion}

At a good prime for a finite connected quandle, the converse to Proposition~\ref{loc:completion} holds: the homotopy type of $BG_p^\wedge$ can be recovered from the unmarked $A_\infty$ type of the covering monoid.

\begin{lemma}[Sphere cancellation]
\label{ainf:sphere-cancellation}
Let $V,V'$ be simply connected $p$-local CW spaces whose second integral homology groups are finite $p$-groups.
Then
\[
 S^2_{(p)}\vee V\simeq S^2_{(p)}\vee V'
 \quad\Longrightarrow\quad V\simeq V'.
\]
Moreover, if the image of $e\in\pi_2(S^2_{(p)}\vee V)$ generates the quotient by torsion as a $\Z_{(p)}$-module, then the homotopy cofiber of a based continuous map $S^2_{(p)}\to S^2_{(p)}\vee V$ representing $e$ is equivalent to $V$.
\end{lemma}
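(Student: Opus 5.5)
I would prove the second assertion first and deduce the first from it. Put $W=S^2_{(p)}\vee V$. Since $V$ is simply connected, Hurewicz gives $\pi_2(W)\cong H_2(W;\Z)\cong \Z_{(p)}\oplus H_2(V;\Z)$. Here $H_2(V;\Z)$ is a finite $p$-group, so it is exactly the torsion subgroup. The hypothesis on $e$ says that its image in $H_2(W)/\mathrm{tors}\cong\Z_{(p)}$ is a generator, so $e=(\lambda,\tau)$ with $\lambda\in\Z_{(p)}^\times$ and $\tau\in H_2(V)$.

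\emph{Retraction.} Write $\iota:S^2_{(p)}\to W$ for the wedge inclusion and $q:W\to V$ for the collapse of the sphere. I will produce a map $\rho:W\to V$ with $\rho\circ f\simeq *$, where $f:S^2_{(p)}\to W$ represents $e$. Define $\rho$ on the wedge summands: on $V$ take the identity, and on $S^2_{(p)}$ take $-\lambda^{-1}\tau$. This makes sense because $S^2_{(p)}$ is a Moore space $M(\Z_{(p)},2)$ and $V$ is simply connected, so $[S^2_{(p)},V]\cong \mathrm{Hom}(\Z_{(p)},\pi_2V)=\pi_2 V$. The lim$^1$/Ext term vanishes, since $\pi_2V$ is a finite $p$-group and $\mathrm{Ext}(\Z_{(p)},\pi_2 V)=0$; alternatively one can use $p$-locality of $V$ directly.

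The composite $\rho f$ then lies in $[S^2_{(p)},V]\cong\pi_2V$, and it equals $\lambda\cdot(-\lambda^{-1}\tau)+\tau=0$. Hence $\rho$ extends over the cofiber $C_f$, giving $\bar\rho:C_f\to V$. To show $\bar\rho$ is an equivalence, I check homology. The cofiber sequence $S^2_{(p)}\to W\to C_f$ gives $H_*(C_f)\cong H_*(V)$ in degrees $\ge3$, and the identification comes from $\rho$ because $\rho|_V=\id$. In degree $2$, $H_2(C_f)=(\Z_{(p)}\oplus H_2V)/\langle(\lambda,\tau)\rangle\cong H_2V$, and $\rho_*$ induces exactly this isomorphism. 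The map $f_*$ is injective on $H_2$, so $H_3(C_f)$ gets no extra class. Finally, $C_f$ is simply connected, since $f$ is surjective onto the $\pi_1$-free part and everything is simply connected. Whitehead's theorem then gives $C_f\simeq V$. $C_f$ is $p$-local because its homology is a $\Z_{(p)}$-module, so no separate localization step is needed.

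\emph{First assertion.} Let $\phi:S^2_{(p)}\vee V\to S^2_{(p)}\vee V'$ be an equivalence, and let $e'=\phi_*[\iota]$. The class $[\iota]$ generates the torsion-free quotient of $\pi_2$, and $\phi_*$ is an isomorphism of $\Z_{(p)}$-modules, so $\phi_*$ preserves torsion and $e'$ generates the quotient by torsion in $\pi_2(S^2_{(p)}\vee V')$. Applying the second assertion twice gives
\[
V\simeq\Cofib(\iota)\simeq\Cofib(\phi\iota)\simeq V'.
\]
The middle equivalence holds because $\phi$ is an equivalence compatible with the two maps.

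\emph{Main obstacle.} The delicate step is the construction of $\rho$ on the summand $S^2_{(p)}$: one must make sure that maps out of the localized sphere are classified by $\pi_2 V$ via $\Z_{(p)}$-linear homomorphisms. I would handle this by modelling $S^2_{(p)}$ as a telescope of degree-$n$ self-maps of $S^2$ over $n$ prime to $p$. The Milnor sequence then gives $[S^2_{(p)},V]$ as $\lim$ and $\lim^1$ of copies of $\pi_2 V$. The $\lim^1$ term vanishes because the copies are finite, and multiplication by each such $n$ is invertible on the finite $p$-group $\pi_2V$, so the $\lim$ is $\pi_2V$.
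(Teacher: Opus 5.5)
Your argument is correct and is essentially the paper's shear argument repackaged. Up to homotopy, your retraction $\rho$ is the collapse $W\to V$ composed with the paper's shear self-equivalence of $W$ (the identity on $V$, and $\iota\mapsto\lambda^{-1}(\iota-\tau)$ on the sphere), which carries $e$ to $\iota$. The paper shears first and then passes to cofibers, proving the first assertion directly. You instead map $C_f$ to $V$, check homology via the cofiber sequence, and deduce the first assertion from the second. Both arguments rely on $[S^2_{(p)},Z]\cong\pi_2Z$ for simply connected $p$-local $Z$; the paper uses this only implicitly.

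One justification needs correcting. The obstruction to injectivity of $[S^2_{(p)},V]\to\operatorname{Hom}(\Z_{(p)},\pi_2V)$ is $\operatorname{Ext}(\Z_{(p)},\pi_3V)$. Equivalently, it is the $\lim^1$ of the tower of copies of $\pi_3V=[\Sigma S^2,V]$ in the Milnor sequence for the telescope. It is not a term built from $\pi_2V$. Since $\pi_3V$ need not be finite (for instance, $V=S^3_{(p)}$ satisfies the hypotheses and has $\pi_3V=\Z_{(p)}$), finiteness is not why this term vanishes. It vanishes because $V$ is $p$-local: multiplication by integers prime to $p$ is bijective on $\pi_3V$, so the tower consists of isomorphisms. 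You genuinely use this injectivity when you conclude $\rho f\simeq *$ from $(\rho f)_*=0$ on $\pi_2$. So the $p$-locality argument you mention only as an alternative is the one the proof needs.
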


\begin{proof}
Put $R=\Z_{(p)}$.
The homological characterization shows that these wedges are $p$-local, and the Hurewicz theorem gives $\pi_2(S^2_{(p)}\vee V)=R\iota\oplus\pi_2(V)$, with torsion second factor.
Thus a homotopy equivalence $f$ sends $\iota$ to $a\iota'+t$, where $a\in R^\times$ and $t\in\pi_2(V')$.
The self-map $E$ that is the identity on $V'$ and sends $\iota'$ to $a^{-1}(\iota'-t)$ induces an isomorphism in integral homology, and hence is an equivalence by the simply connected Whitehead theorem.
Since $Ef$ preserves the sphere inclusion up to homotopy, taking cofibers gives $V\simeq V'$.
Applying the same shear to $e=a\iota+t$ proves the final assertion.
\end{proof}

\begin{theorem}[Classification of unmarked local monoids]
\label{ainf:classification}\label{ainf:marking}
Let $X,X'$ be finite connected quandles with inner automorphism groups $G,G'$ and point stabilizers $H,H'$.
If $p\nmid |H||H'|$, then
\begin{equation}\label{ainf:classification-equivalence}
 (\cM_X)_{(p)}\simeq_{A_\infty}(\cM_{X'})_{(p)}
 \quad\Longleftrightarrow\quad
 BG^\wedge_p\simeq BG'^\wedge_p.
\end{equation}
The space $BG^\wedge_p$ can be recovered as the homotopy cofiber of any based continuous map $S^2_{(p)}\to B((\cM_X)_{(p)})$ representing a $\Z_{(p)}$-generator of $\pi_2(B((\cM_X)_{(p)}))/\mathrm{torsion}$.
This recovers the homotopy type, not a particular equivalence or marking.
\end{theorem}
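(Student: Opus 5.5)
The plan is to reduce everything to the delooped spaces and then apply Lemma~\ref{ainf:sphere-cancellation}. By Proposition~\ref{loc:completion}, for each good prime there is an $A_\infty$ equivalence $(\cM_X)_{(p)}\simeq_{A_\infty}\Omega\bigl((S^2\vee BG^\wedge_p)_{(p)}\bigr)$. Since $BG^\wedge_p\simeq L_{(p)}$ is simply connected and $p$-local, we get $(S^2\vee BG^\wedge_p)_{(p)}\simeq S^2_{(p)}\vee BG^\wedge_p$. The reason is that the natural map $S^2\vee BG^\wedge_p\to S^2_{(p)}\vee BG^\wedge_p$ is a $\Z_{(p)}$-homology equivalence onto a simply connected space whose homology is $\Z_{(p)}$-local, so the target is $p$-local.

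For the direction ($\Leftarrow$), an equivalence $BG^\wedge_p\simeq BG'^\wedge_p$ wedges with the identity of $S^2_{(p)}$ and then loops to an $A_\infty$ equivalence, by functoriality of $\Omega$ as a grouplike $A_\infty$-space.

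For the direction ($\Rightarrow$), an $A_\infty$ equivalence of grouplike $A_\infty$-spaces deloops to a based homotopy equivalence of classifying spaces. This uses the bar--loop comparison recalled in Section~\ref{sec:preliminaries}, equivalently uniqueness of deloopings for connected grouplike $A_\infty$-spaces. It gives $S^2_{(p)}\vee BG^\wedge_p\simeq S^2_{(p)}\vee BG'^\wedge_p$. To apply Lemma~\ref{ainf:sphere-cancellation} with $V=BG^\wedge_p$, I must check its hypotheses: $V$ is simply connected, $p$-local and CW up to equivalence, and $H_2(V;\Z)$ is a finite $p$-group.

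\begin{itemize}
\item $V\simeq L_{(p)}$, so $H_2(V;\Z)\cong H_2(L;\Z)\otimes\Z_{(p)}$.
\item By Lemma~\ref{loc:L-properties}, $L$ has finite type and is rationally acyclic, so $H_2(L;\Z)$ is finite. Hence its $p$-localization is a finite $p$-group.
\end{itemize}

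The cancellation lemma then yields $BG^\wedge_p\simeq BG'^\wedge_p$.

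For the recovery statement, identify $B((\cM_X)_{(p)})$ with $S^2_{(p)}\vee BG^\wedge_p$. By the Hurewicz theorem, $\pi_2$ of this space is $\Z_{(p)}\iota\oplus\pi_2(BG^\wedge_p)$, with torsion second summand. A map $S^2_{(p)}\to B((\cM_X)_{(p)})$ whose class generates $\pi_2/\mathrm{torsion}$ transports, under the chosen equivalence, to a class $e=a\iota+t$ with $a\in\Z_{(p)}^\times$. Homotopy cofibers are invariant under based equivalences of the target, so the final assertion of Lemma~\ref{ainf:sphere-cancellation} identifies the cofiber with $BG^\wedge_p$. Maps out of $S^2_{(p)}$ are determined up to homotopy by their $\pi_2$ class, since $S^2_{(p)}$ is a Moore space and the target is $p$-local and simply connected. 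The result is therefore independent of the representative, up to equivalence.

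The main obstacle I anticipate is the delooping step in ($\Rightarrow$). One must make sure that the abstract $A_\infty$ equivalence, which is only a notion up to homotopy for the localized structure of Theorem~\ref{loc:main}, really induces a homotopy equivalence of the specified deloopings. I would handle this by invoking Vogt's equivalence between grouplike $A_\infty$-spaces and connected based spaces, as already used for the bar--loop comparison. A secondary check is that no marking is needed, which matches the theorem's disclaimer.
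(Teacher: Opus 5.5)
Your proposal is correct and follows the paper's proof. The reverse implication comes from the $A_\infty$ model of Proposition~\ref{loc:completion}, and the forward implication and the recovery statement come from delooping the $A_\infty$ equivalence and applying Lemma~\ref{ainf:sphere-cancellation}, once Lemma~\ref{loc:L-properties} shows that $BG^\wedge_p\simeq L_{(p)}$ is simply connected with second homology a finite $p$-group; you only add details the paper leaves implicit, such as $(S^2\vee BG^\wedge_p)_{(p)}\simeq S^2_{(p)}\vee BG^\wedge_p$ and transporting the generating class through the chosen equivalence.
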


\begin{proof}
By Proposition~\ref{loc:completion} and Lemma~\ref{loc:L-properties}, $BG_p^\wedge\simeq L_{(p)}$ is simply connected, and its second integral homology is a finite $p$-group.
Proposition~\ref{loc:completion} gives the reverse implication in \eqref{ainf:classification-equivalence}.
For the forward implication, delooping an $A_\infty$ equivalence gives
$S^2_{(p)}\vee BG_p^\wedge\simeq S^2_{(p)}\vee BG'^\wedge_p$.
Lemma~\ref{ainf:sphere-cancellation} then gives the forward implication and the recovery statement.
\end{proof}

The recovered homotopy type of the completion does not determine the group $G$; recovering the rack space requires the marking $j$ of Theorem~\ref{int:main}.

\begin{remark}\label{loc:localization-not-completion}
The outer localization in \eqref{loc:completed-group-formula} is essential.
The wedge summand $S^2$ contributes $\Z_{(p)}$ to $\pi_2(Y_{(p)})$, whereas completion changes this summand to $\Z_p$.
Only the rationally acyclic term $L$ is replaced by $BG_p^\wedge$.
\end{remark}

\section{Fundamental groups, universal covers, and rationalization}
\label{sec:integral-consequences}\label{rat:section}

We apply the integral and localized models to compute $\pi_1(\cM)$, describe the universal cover of $\mathcal R_X$, and determine the rational homotopy type and multiplication of $\cM$.
We retain the generating transitive augmentation and the notation of Theorem~\ref{int:main}; finiteness assumptions will be stated where needed.
Write $H_n(G)=H_n(BG;\Z)$.

\subsection{Torus classes and the fundamental group}\label{low:torus-subsection}

We compute $\pi_1(\cM)$ from $H_2(G)$ and commuting torus classes.

Using the group homomorphism $b_h:H\times\Z\to H$ in \eqref{int:two-maps}, define a homomorphism of abelian groups by
\begin{equation}\label{low:torus-map}
 \tau_h:H_1(H)\longrightarrow H_2(G),\qquad
 \tau_h(v)=(Bi)_*(Bb_h)_*(v\times[\Z]).
\end{equation}
Here $[\Z]\in H_1(\Z)$ is the standard generator, and the cross product identifies $H_1(H)\otimes H_1(\Z)$ with a K\"unneth direct summand of $H_2(H\times\Z)$.
For $u\in H$, the class $\tau_h([u])$ is represented in the inhomogeneous bar complex by
\begin{equation}\label{low:bar-cycle}
 [i(u)\mid h]-[h\mid i(u)].
\end{equation}
This is a cycle because $i(u)$ and $h$ commute.

\begin{proposition}\label{low:fundamental-group}
Put
\[
 D=\ker\bigl(\Z\longrightarrow G_{\mathrm{ab}},\quad 1\longmapsto[h]\bigr).
\]
There is an exact sequence of abelian groups
\begin{equation}\label{low:exact-sequence}
 0\longrightarrow H_2(G)/\operatorname{im}\tau_h
 \longrightarrow \pi_1(\cM)
 \longrightarrow D\longrightarrow0,
\end{equation}
which splits noncanonically.
The group $G_{\mathrm{ab}}$ is cyclic, generated by $[h]$.
If $[h]$ has finite order $q$, then $D=q\Z$ and there is a noncanonical isomorphism
\begin{equation}\label{low:fundamental-splitting}
 \pi_1(\cM)\cong\Z\oplus
       H_2(G)/\operatorname{im}\tau_h.
\end{equation}
If $[h]$ has infinite order, then $D=0$.
When $G$ is finite, the group $H_2(G)/\operatorname{im}\tau_h$ is finite.
\end{proposition}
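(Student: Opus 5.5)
Since $\cM\simeq\Omega Y$ by Theorem~\ref{int:main}, we have $\pi_1(\cM)\cong\pi_2(Y)\cong H_2(Y)$, using the Hurewicz theorem and the simple connectivity of $Y$. The plan is to compute $H_2(Y;\Z)$ from the Mayer--Vietoris sequence of the double mapping cylinder \eqref{int:Y-definition}:
\[
 H_2(H\times\Z)\xrightarrow{(Ba_*,\,Bb_{h*})}H_2(G)\oplus H_2(H)\longrightarrow H_2(Y)\longrightarrow H_1(H\times\Z)\xrightarrow{(a_*,\,b_{h*})}H_1(G)\oplus H_1(H)\longrightarrow H_1(Y)=0 .
\]

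\emph{Step 1: the cokernel in degree two.} By the K\"unneth theorem, $H_2(H\times\Z)=H_2(H)\oplus H_1(H)\otimes H_1(\Z)$. On the summand $H_2(H)$ the map is $w\mapsto(i_*w,w)$, because $a$ and $b_h$ restrict to $i$ and $\id$ there. On $v\times[\Z]$ the first component vanishes, since $a$ kills the $\Z$ factor and a cross product with a class pulled back from $B\Z$ maps to zero under a map factoring through $BH$. The second component is $(Bb_h)_*(v\times[\Z])$. I will take the automorphism $(x,y)\mapsto(x-i_*y,\,y)$ of $H_2(G)\oplus H_2(H)$ and quotient by the second summand. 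This identifies the cokernel with $H_2(G)$ modulo $\{-i_*(Bb_h)_*(v\times[\Z])\}=\operatorname{im}\tau_h$, as in \eqref{low:torus-map}.

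\emph{Step 2: the kernel in degree one.} We have $H_1(H\times\Z)=H_{\mathrm{ab}}\oplus\Z$, and the map sends $(v,m)\mapsto(i_*v,\,v+m[h])$. The kernel therefore consists of the pairs with $v=-m[h]_H$ and $i_*v=0$, that is, $m[h]_G=0$ in $G_{\mathrm{ab}}$. Projecting to $m$ identifies this kernel with $D$. The kernel of $D\to H_1(Y)$ is a subgroup of $\Z$, hence free, so the resulting sequence \eqref{low:exact-sequence} splits noncanonically.

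\emph{Step 3: the remaining claims.} $G_{\mathrm{ab}}$ is cyclic generated by $[h]$, because $G$ is generated by conjugates of $h$ and all conjugates have the same class in $G_{\mathrm{ab}}$. This gives $D=q\Z$ when $[h]$ has finite order $q$, and $D=0$ when it has infinite order. Finally, if $G$ is finite then $H_2(G)$ is finite, so its quotient is finite. The only point needing care is the sign and cross-product bookkeeping in Step~1, namely checking that $Ba_*$ kills the $H_1\otimes H_1$ K\"unneth summand. This holds because $a$ factors through the projection $H\times\Z\to H$, and that projection annihilates $v\times[\Z]$.
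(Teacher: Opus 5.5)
Your proof is correct and is essentially the paper's argument: you use the Mayer--Vietoris sequence of the double mapping cylinder with the K\"unneth splitting of $H_*(H\times\Z)$, and you cancel the $H_2(H)$ and $H_1(H)$ summands to obtain $H_1(H)\xrightarrow{\tau_h}H_2(G)\to H_2(Y)\to\Z\to G_{\mathrm{ab}}\to0$, followed by $\pi_1(\cM)\cong\pi_2(Y)\cong H_2(Y)$ and freeness of subgroups of $\Z$. The only blemish is the phrase ``the kernel of $D\to H_1(Y)$'', which should simply say that $D$, the kernel of $H_1(H\times\Z)\to H_1(G)\oplus H_1(H)$, is a subgroup of $\Z$ and hence free.
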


\begin{proof}
Since the conjugates of $h$ generate $G$, the element $[h]$ generates $G_{\mathrm{ab}}$.
Apply the K\"unneth decomposition to the Mayer--Vietoris sequence of \eqref{int:Y-definition} and cancel the direct summands $H_2(H)$ and $H_1(H)$ for which the second component is $-\id$.
This gives an exact sequence of abelian groups
\[
 H_1(H)\xrightarrow{\tau_h}H_2(G)\longrightarrow H_2(Y)
 \xrightarrow{\delta}\Z\xrightarrow{1\mapsto[h]}G_{\mathrm{ab}}
 \longrightarrow0.
\]
Now \eqref{low:exact-sequence} follows from $\operatorname{im}\delta=D$ and the identification $H_2(Y)\cong\pi_2(Y)\cong\pi_1(\cM)$ given by the Hurewicz theorem and Theorem~\ref{int:main}.
The sequence splits because every subgroup of $\Z$ is free.
The descriptions of $D$ follow from its definition.
If $G$ is finite, then $H_2(G)$ is finite.
The collapse of the defining span to $*\leftarrow B\Z\to *$ gives a map $Y\to S^2$ inducing $\delta$, up to orientation, by naturality of Mayer--Vietoris; its image is $D$.
\end{proof}

We now relate this computation to associated groups and reduced Schur multipliers.

For connected $X$, the presentation \eqref{pre:associated-group} identifies $\As(X)_{\mathrm{ab}}$ with $\Z$ via $\varepsilon:\As(X)\to\Z$, $e_x\mapsto1$.
Let $\psi:\As(X)\to\Inn(X)$ be the canonical epimorphism and put
\[
 \overline G=\operatorname{im}(\varepsilon,\psi),\qquad
 \overline h=(1,S_{x_0}).
\]
The action through the second projection and the map $x\mapsto(1,S_x)$ define a generating transitive augmentation.
The first projection induces $\overline G_{\mathrm{ab}}\cong\Z$, so $D=0$; Proposition~\ref{low:fundamental-group} and \eqref{pre:fundamental-cover} therefore give
\[
 \ker(\varepsilon,\psi)\cong
 H_2(\overline G)/\operatorname{im}\tau_{\overline h},
\]
where $\tau_{\overline h}$ uses $\operatorname{Stab}_{\overline G}(x_0)$.
By transitivity and conjugation invariance, these torus relations agree up to sign with those of \cite[Theorem~3.1]{IshikawaAssociated2024}, giving a geometric proof of its kernel formula for connected quandles.
Ishikawa also determines the central extension class and treats nonconnected racks.

For a finite faithful connected quandle with its canonical augmentation, $X$ is identified with the conjugacy class $c$ of $h$ and $H=C_G(h)$.
Thus $H_2(G)/\operatorname{im}\tau_h$ is the reduced Schur multiplier $H_2(G,c)$ \cite[Section~4B]{RobertsVenkatesh2015}, which is isomorphic to $\ker\psi\cap[\As(X),\As(X)]$ \cite[Proposition~6.9]{Loenne2019}.
For a nonfaithful quandle, the torus relations use the stabilizer $H$, which need not equal $C_G(h)$.

\subsection{Universal covers}\label{low:covers-subsection}

Let $Y\langle2\rangle$ be the $2$-connected cover of $Y$, namely the homotopy fiber of a based map $Y\to K(\pi_2Y,2)$ inducing the identity on $\pi_2$.

\begin{corollary}\label{low:universal-cover}
There are ordinary homotopy equivalences
\begin{equation}\label{low:universal-model}
 \widetilde{\mathcal{R}_X}\simeq\widetilde \cM\simeq\Omega(Y\langle2\rangle).
\end{equation}
If $\widetilde \cM$ is equipped with the multiplication lifted from $\cM$, the second equivalence may be chosen to be an $A_\infty$ equivalence.
\end{corollary}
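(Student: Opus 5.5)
The plan is to prove the two equivalences separately: the first from covering space theory, the second by looping the $2$-connected cover of $Y$ and comparing with the model of Theorem~\ref{int:main}.

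\emph{The equivalence $\widetilde{\mathcal{R}_X}\simeq\widetilde\cM$.} The map $\cM\to\mathcal{R}_X$ is a regular covering with deck group $G$ (Section~\ref{sec:preliminaries}). Hence the universal cover of $\cM$ is also a universal cover of $\mathcal{R}_X$. Concretely, covering space theory together with \eqref{pre:fundamental-cover} identifies $\widetilde\cM$ with the covering of $\mathcal{R}_X$ corresponding to the trivial subgroup of $\As(X)$. This gives a homeomorphism, so in particular a homotopy equivalence.

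\emph{The equivalence $\widetilde\cM\simeq\Omega(Y\langle2\rangle)$ as spaces.} By \eqref{int:loop-model}, $\cM\simeq_{A_\infty}\Omega Y$. By Theorem~\ref{int:main}, $Y$ is simply connected, so $\pi_1(\Omega Y)\cong\pi_2(Y)$. Consider the fiber sequence $Y\langle2\rangle\to Y\to K(\pi_2Y,2)$ and loop it. This gives a fibration $\Omega(Y\langle2\rangle)\to\Omega Y\to K(\pi_2Y,1)$ whose last map is an isomorphism on $\pi_1$. Since $Y\langle 2\rangle$ is $2$-connected, $\Omega(Y\langle2\rangle)$ is simply connected. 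It is therefore the universal cover of $\Omega Y$, and homotopy uniqueness of universal covers (all spaces have CW type) gives $\widetilde\cM\simeq\widetilde{\Omega Y}\simeq\Omega(Y\langle2\rangle)$.

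\emph{The $A_\infty$ refinement.} I expect this to be the main step. First I would fix a model so that the cover is an honest topological monoid. Replace $\Omega Y$ by a strictly associative model, say the Moore loops, or use the zigzag of strict monoid equivalences underlying the bar--loop comparison. The universal cover of a connected well-pointed topological monoid $Z$ carries a unique lifted multiplication fixing a chosen unit lift. Realize it as the space of homotopy classes rel endpoints of paths from $1$, multiplied pointwise; this is a strict monoid with a covering homomorphism to $Z$. This construction is functorial for strict monoid maps, and it sends weak equivalences to weak equivalences on covers. Applying it along the zigzag connecting $\cM$ and Moore loops on $Y$ gives $\widetilde\cM\simeq_{A_\infty}\widetilde{\Omega^M Y}$.

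It remains to identify $\widetilde{\Omega^M Y}$ with $\Omega^M(Y\langle2\rangle)$ as monoids. The map $\Omega^M(Y\langle2\rangle)\to\Omega^M Y$ is a strict monoid map from a simply connected monoid. It therefore lifts uniquely, as a based map, through the covering $\widetilde{\Omega^M Y}\to\Omega^M Y$. By uniqueness of based lifts, the lift is multiplicative, since both $\mu\circ(\lambda\times\lambda)$ and $\lambda\circ\mu$ lift the same map from the simply connected space $\Omega^M(Y\langle2\rangle)^2$ and agree at the basepoint. The lift is a homotopy equivalence by the previous step.
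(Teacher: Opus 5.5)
Your argument is correct. The first two steps are what the paper does: the covering $\cM\to\mathcal R_X$ identifies the universal covers, and $\Omega(Y\langle2\rangle)$ is the simply connected cover of $\Omega Y$. For the $A_\infty$ refinement, however, you take a genuinely different route.

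The paper works on the delooping side. The universal covering map $\widetilde\cM\to\cM$ is a strict monoid homomorphism, so it induces $B\widetilde\cM\to B\cM$. The natural bar--loop comparison gives $\pi_nB\widetilde\cM\cong\pi_{n-1}\widetilde\cM$, so this map is a $2$-connected cover. Via $B\cM\simeq Y$ one gets $B\widetilde\cM\simeq Y\langle2\rangle$, and looping yields the $A_\infty$ equivalence.

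You instead stay on the monoid side. You rectify $\cM\simeq_{A_\infty}\Omega Y$ to a zigzag of strict monoid equivalences ending at Moore loops, and push the functorial path-class universal cover along it. You then lift the strict map $\Omega^M(Y\langle2\rangle)\to\Omega^MY$ through the covering and use uniqueness of lifts to see that the lift is multiplicative.

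The paper's route is more economical. An $A_\infty$ equivalence is just a based equivalence of deloopings, so it needs no rectification and no covering-space theory on the intermediate monoids or on loop spaces. Your route relies on two standard inputs that you should state explicitly:
\begin{itemize}
\item an $A_\infty$ equivalence between well-pointed grouplike monoids is realized by a zigzag of strict monoid weak equivalences;
\item every monoid in the zigzag, as well as $\Omega^MY$ and $\Omega^M(Y\langle2\rangle)$, is locally nice enough for universal covers and the lifting criterion to apply.
\end{itemize}
For the second point, you can first replace everything by CW monoids via the product-preserving functor $|\mathrm{Sing}(-)|$.

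In return, your argument produces an explicit strict monoid map $\Omega^M(Y\langle2\rangle)\to\widetilde{\Omega^MY}$. It also shows concretely that the lifted multiplication is the one being compared.
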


\begin{proof}
The covering $\cM\to \mathcal{R}_X$ identifies the universal covers.
Since $\Omega(Y\langle2\rangle)$ models the simply connected cover of $\Omega Y$, Theorem~\ref{int:main} gives \eqref{low:universal-model}.
The multiplication on $\cM$ lifts uniquely to the based universal cover, preserving associativity and the unit.
The bar--loop comparison shows that
$B\widetilde\cM\to B\cM$ is a $2$-connected cover.
Together with $B\cM\simeq Y$, this gives
$B\widetilde\cM\simeq Y\langle2\rangle$.
Looping yields the required $A_\infty$ equivalence.
\end{proof}

Under a vanishing assumption on second homology, the localized universal cover has the following description.

\begin{corollary}\label{loc:universal-cover-explicit}
For a generating transitive augmentation as in Section~\ref{int:section}, suppose that $G$ is finite, $p\nmid |H|$, and $H_2(G;\Z)\otimes\Z_{(p)}=0$.
Then there is an ordinary homotopy equivalence
\begin{equation}\label{loc:universal-cover-formula}
 (\widetilde{\mathcal{R}_X})_{(p)}\simeq
 \Omega\bigl((S^3\vee(S^1_+\wedge L))_{(p)}\bigr).
\end{equation}
If $\widetilde{\mathcal{R}_X}$ is replaced by $\widetilde\cM$ equipped with its lifted multiplication, the equivalence can be chosen to be an $A_\infty$ equivalence.
Here $S^1_+$ is the circle with a disjoint basepoint adjoined, and
\[
 S^1_+\wedge L=(S^1\times L)/(S^1\times\{*\}).
\]
\end{corollary}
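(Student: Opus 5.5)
We combine the localized model of Theorem~\ref{loc:main} with the description of universal covers in Corollary~\ref{low:universal-cover}. By Corollary~\ref{low:universal-cover}, $\widetilde{\mathcal R_X}\simeq\widetilde\cM\simeq_{A_\infty}\Omega(Y\langle2\rangle)$. Since $Y$ is simply connected, $Y\langle2\rangle$ is simply connected, and localization commutes with taking $2$-connected covers of simply connected spaces of finite type. Hence $(\widetilde\cM)_{(p)}\simeq_{A_\infty}\Omega\bigl((Y_{(p)})\langle2\rangle\bigr)$. By \eqref{loc:Y-equivalence}, $Y_{(p)}\simeq(S^2\vee L)_{(p)}$. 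It therefore suffices to identify the $2$-connected cover of $W=S^2_{(p)}\vee L_{(p)}$.

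\textbf{Step 1: $\pi_2(W)\cong\Z_{(p)}$, generated by the sphere.} By Hurewicz, $\pi_2(W)\cong H_2(S^2;\Z_{(p)})\oplus H_2(L;\Z_{(p)})$. Lemma~\ref{loc:L-properties} shows that $\ell$ is a $\Z_{(p)}$-homology isomorphism, so $H_2(L;\Z_{(p)})\cong H_2(G;\Z)\otimes\Z_{(p)}=0$. Consequently the map $W\to K(\pi_2W,2)$ can be taken to be the localized collapse $c\colon W\to S^2_{(p)}\to K(\Z_{(p)},2)=(\mathbb{CP}^\infty)_{(p)}$, i.e.\ the map killing $L$ followed by the inclusion of the bottom cell.

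\textbf{Step 2: identify the homotopy fiber of $c$.} Apply Mather's cube theorem or Ganea's fiber-of-a-wedge formula to $W=S^2\vee L$ mapping to $K(\Z,2)$, working integrally first and localizing at the end. The fiber over $S^2$ of the composite $S^2\to\mathbb{CP}^\infty$ is $S^3$, which is the Hopf fibration $S^1\to S^3\to S^2$ viewed through the fiber $S^1\simeq\Omega\mathbb{CP}^\infty$ of $S^2\to\mathbb{CP}^\infty$. Over $L$ the map is null, so the fiber there is $S^1\times L$. These pieces are glued along the fiber over the basepoint, which is $S^1$. The resulting homotopy pushout is
\[
 S^3\cup_{S^1}(S^1\times L),
\]
where $S^1$ sits in $S^1\times L$ as $S^1\times\{*\}$ and the map $S^1\to S^3$ is null-homotopic. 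Collapsing this contractible piece gives $S^3\vee\bigl((S^1\times L)/(S^1\times *)\bigr)=S^3\vee(S^1_+\wedge L)$. Localizing, and using that localization preserves fiber sequences of simply connected spaces, yields $W\langle2\rangle\simeq(S^3\vee(S^1_+\wedge L))_{(p)}$.

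\textbf{Step 3: $A_\infty$ refinement.} Since $\widetilde\cM$ carries its lifted multiplication, Corollary~\ref{low:universal-cover} already gives the equivalence as an $A_\infty$ equivalence with $\Omega(Y\langle2\rangle)$. Looping a based equivalence of deloopings then produces an $A_\infty$ equivalence, which gives the final claim.

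The main obstacle is Step~2: the fiber-of-a-wedge computation must be justified carefully. My approach is to realize $c$ as a fibration by pulling back the path fibration over $K(\Z_{(p)},2)$, restrict it over the two wedge summands, and then use Mather's second cube theorem to see that the total space is the homotopy pushout of the restricted total spaces. One also has to check that the $S^1$-fiber over the basepoint includes null-homotopically into $S^3$. This holds because the inclusion $S^1\to S^3$ is null for dimensional reasons, after localization as well.
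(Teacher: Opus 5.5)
Your proposal is correct and follows essentially the same route as the paper: identify the $2$-connected cover of $(S^2\vee L)_{(p)}$ as the localized homotopy fiber of the collapse to $K(\Z,2)$, computed as the homotopy pushout of $S^3\leftarrow S^1\to S^1\times L$ with null-homotopic first map, use $H_2(G;\Z)\otimes\Z_{(p)}=0$ so that only the sphere contributes to $\pi_2$, and obtain the $A_\infty$ statement by looping the comparison of $2$-connected covers of the deloopings. The only difference is the order of steps: you check $\pi_2(W)\cong\Z_{(p)}$ before computing the fiber, whereas the paper first computes $\pi_2(E)\cong\pi_2(L)$ for the integral fiber and then shows that it vanishes after localization.
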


\begin{proof}
Put $W=S^2\vee L$, and let $c:W\to K(\Z,2)$ be a map representing a generator on $S^2$ and constant on $L$.
Its homotopy fiber $E$ is obtained by gluing the Hopf bundle and a trivial circle bundle along their common fiber.
Using the fiber inclusion $S^1\to S^3$ and the subcomplex inclusion $S^1\to S^1\times L$ gives the homotopy equivalences
\[
 E\simeq\hocolim[S^3\longleftarrow S^1\longrightarrow S^1\times L]
   \simeq S^3\vee(S^1_+\wedge L).
\]
The first arrow is null-homotopic, and the second is the inclusion of the subcomplex $S^1\times\{*\}$.
The fiber sequence shows that $E$ is simply connected, $\pi_2(E)\cong\pi_2(L)$, and $\pi_n(E)\cong\pi_n(W)$ for $n\ge3$.
By the Hurewicz theorem and Lemma~\ref{loc:L-properties},
\[
 \pi_2(E)\otimes\Z_{(p)}
   \cong H_2(G;\Z)\otimes\Z_{(p)}=0
\]
so $E_{(p)}\to W_{(p)}$ is the $2$-connected cover.
The comparison $\widetilde \cM\to \cM_{(p)}\simeq\Omega W_{(p)}$ of Theorem~\ref{loc:main} lifts to $\Omega E_{(p)}$ and localizes homotopy groups in degrees at least two.
Thus $(\widetilde \cM)_{(p)}\simeq\Omega E_{(p)}$, and we obtain an $A_\infty$ equivalence by looping the comparison between the $2$-connected covers of the deloopings.
Corollary~\ref{low:universal-cover} gives the same model for $(\widetilde{\mathcal{R}_X})_{(p)}$.
\end{proof}

Without the vanishing assumption, we have $\pi_1(\Omega E_{(p)})=H_2(G;\Z)_{(p)}$.

\begin{remark}[Rationalization and multiplication]\label{rat:model}
Let $X$ be a finite connected quandle.
Put $G=\Inn(X)$ and $\cM=B(G;X)$, and let $Y$ be the space in \eqref{int:Y-definition}.
Since $L_{\Q}\simeq *$ by Lemma~\ref{loc:L-properties}, Theorem~\ref{loc:main} gives, respectively, a homotopy equivalence and an $A_\infty$ equivalence
\begin{equation}\label{rat:loop-equivalences}
 Y_{\mathbb Q}\simeq S^2_{\mathbb Q},\qquad
 \cM_{\mathbb Q}\simeq_{A_\infty}\Omega S^2_{\mathbb Q}.
\end{equation}
In particular, $\pi_j(\cM)\otimes\Q=0$ for $j\ge3$.

Under \eqref{rat:loop-equivalences}, let $\alpha\in\pi_1(\cM_{\Q})$ and $\beta\in\pi_2(\cM_{\Q})$ correspond to the adjoints of the rationalizations of $\iota_2=[\id_{S^2}]\in\pi_2(S^2)$ and the Hopf class $\eta\in\pi_3(S^2)$, respectively.
The integral identity $[\iota_2,\iota_2]=\pm2\eta$
\cite[Section~4.J, immediately before Corollary~4J.4]{Hatcher}
and the Whitehead--Samelson adjunction give
\begin{equation}\label{rat:samelson-square}
 [\alpha,\alpha]_{\mathrm{Sam}}=\pm2\beta\ne0.
\end{equation}
\end{remark}

\section{From the covering monoid to rack and quandle homology}
\label{hom:section}

Throughout this section, $X$ is a finite connected quandle, equipped with its canonical augmentation unless otherwise stated.
We use the notation $G,H,\cM,\mathcal{N}_X,L$ introduced above.
Propositions~\ref{hom:covering} and~\ref{hom:shift} also apply to generating transitive augmentations into finite groups $G$.
Coefficients are constant, and we write $H_n^R(X;\Lambda)=H_n(\mathcal{R}_X;\Lambda)$ and $H_R^n(X;\Lambda)=H^n(\mathcal{R}_X;\Lambda)$, including $H_0^R=H_R^0=\Lambda$.
For chain complexes, our shift convention is $D[m]_n=D_{n-m}$ with differential $d_{D[m]}=(-1)^m d_D$.

We convert the homology of $\Omega L$ into rack and quandle homology, retaining torsion orders for the computations in Sections~\ref{sec:cyclic}--\ref{ex:nonalex-section}.
The final subsection determines the ordinary rack cup product over a field.

\subsection{Transfer and the degree shift}

\begin{proposition}\label{hom:covering}
Let $q:\cM\to\mathcal{N}_X$ be the principal $H$-covering in
\eqref{pre:intermediate-cover}.
Every deck transformation is homotopic to the identity.
Consequently, for a commutative ring $\Lambda$ in which $|H|$ is invertible, the induced $\Lambda$-module homomorphisms
\[
 q_*\colon H_n(\cM;\Lambda)\xrightarrow{\ \cong\ }H_n(\mathcal{N}_X;\Lambda),
 \qquad
 q^*\colon H^n(\mathcal{N}_X;\Lambda)\xrightarrow{\ \cong\ }H^n(\cM;\Lambda)
\]
are isomorphisms for all $n\geq0$.
The same holds with coefficients in any constant $\Lambda$-module.
\end{proposition}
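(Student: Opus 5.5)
The argument has two steps. First I show that the deck group of $q$ acts homotopically trivially on $\cM$. Then I feed this into the standard transfer for a finite regular covering.

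\emph{Deck transformations.} By \eqref{pre:intermediate-cover}, $\mathcal N_X\cong H\backslash\cM$, where $H\subset G$ acts by left multiplication by vertices: $u\cdot[g;\mathbf t,\mathbf x]=[ug;\mathbf t,\mathbf x]$. So each deck transformation is $L_u\colon m\mapsto um$ for some $u\in H$, with $u$ regarded as a vertex of $\cM$. The edge $[g;t,x]$ joins $g$ to $g\eta(x)$, and $\cM$ is path-connected, so there is a path $\sigma\colon[0,1]\to\cM$ from the unit vertex $1$ to $u$. Then
\[
 F(m,s)=\sigma(s)\,m
\]
is a homotopy from $L_1=\id$ to $L_u$, by continuity of the Clauwens product \eqref{pre:monoid-product}. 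Since we only need free homotopies to control induced maps on homology and cohomology, no basepoint bookkeeping is required.

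\emph{Transfer.} The group $H$ is finite because $G$ is finite, and the covering $q$ is regular with deck group $H$. This gives a transfer $\operatorname{tr}\colon C_*(\mathcal N_X;\Lambda)\to C_*(\cM;\Lambda)$, sending a singular simplex to the sum of its $|H|$ lifts (equivalently, cellularly on the cubical CW structure, since $H$ permutes the cubes freely). It satisfies
\[
 q_*\circ\operatorname{tr}=|H|\cdot\id,
 \qquad
 \operatorname{tr}\circ q_*=\sum_{u\in H}(L_u)_*.
\]
By the first step, each $(L_u)_*$ is the identity on $H_n(\cM;\Lambda)$, so the second composite is also multiplication by $|H|$. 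Since $|H|$ is invertible in $\Lambda$, the map $q_*$ is an isomorphism with inverse $|H|^{-1}\operatorname{tr}$.

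\emph{Cohomology and other coefficients.} Dualizing gives $\operatorname{tr}^*$ with
\[
 \operatorname{tr}^*q^*=|H|,
 \qquad
 q^*\operatorname{tr}^*=\sum_u L_u^*=|H|,
\]
so $q^*$ is an isomorphism as well. For a constant $\Lambda$-module $M$, tensor or Hom the chain-level identities with $M$. They hold on chains, and the homotopies $F$ act on chains with arbitrary constant coefficients, so the same argument applies verbatim. The case $n=0$ is immediate from connectedness.

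There is no real obstacle here. The only point needing care is that the deck action is exactly left multiplication by vertices of $H$; this is what makes the path-multiplication homotopy available. That identification was recorded in Section~\ref{sec:preliminaries}.
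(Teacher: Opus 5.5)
Your proposal is correct and follows the paper's own proof. You trivialize each deck transformation by the homotopy $(s,m)\mapsto\sigma(s)m$ along a path from the unit to $u\in H$. The transfer identities $q_*\operatorname{tr}=|H|$ and $\operatorname{tr}q_*=\sum_{u\in H}(L_u)_*=|H|$ then give the inverse $|H|^{-1}\operatorname{tr}$, and the same argument carries over to cohomology and to constant coefficient modules.
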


\begin{proof}
The identification \eqref{pre:intermediate-cover} exhibits $q$ as a principal $H$-covering.
Since $\cM$ is connected, choose a path $\gamma_h$ from its unit to $h\in H$.
Then $(s,m)\mapsto\gamma_h(s)m$ gives a homotopy from the identity to the deck transformation $h$.
Thus the transfer $\operatorname{tr}$, a $\Lambda$-module homomorphism, satisfies
\[
 q_*\operatorname{tr}=|H|\,\id,
 \qquad
 \operatorname{tr}q_*=
 \sum_{h\in H}(h\cdot-)_*=|H|\,\id.
\]
It follows that $|H|^{-1}\operatorname{tr}$ is the inverse of $q_*$.
The same transfer argument applies to cohomology and to any constant $\Lambda$-module.
\end{proof}

This is Clauwens' transfer argument \cite[Proposition~4.2]{Clauwens}.
The following degree shift is the chain identification of \cite[\S2.3]{Clauwens}; its integral homological form is also recorded in \cite[Remark~4.1]{NosakaSurfaces2013}.

\begin{proposition}\label{hom:shift}
For every commutative coefficient ring $\Lambda$ and every $n\geq0$, there are $\Lambda$-module isomorphisms
\begin{equation}\label{hom:shift-groups}
 H_{n+1}^R(X;\Lambda)\cong H_n(\mathcal{N}_X;\Lambda),
 \qquad
 H_R^{n+1}(X;\Lambda)\cong H^n(\mathcal{N}_X;\Lambda)
\end{equation}
natural in the coefficients.
In particular, if $p\nmid|H|$, then
\begin{equation}\label{hom:shift-cover}
 H_{n+1}^R(X;\Z_{(p)})\cong H_n(\cM;\Z_{(p)}),
 \qquad
 H_R^{n+1}(X;\Z_{(p)})\cong H^n(\cM;\Z_{(p)}).
\end{equation}
The corresponding statements hold with coefficients in any $\Z_{(p)}$-module, including any field of characteristic $p$.
\end{proposition}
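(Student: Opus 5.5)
The plan is to prove the degree shift at the chain level by identifying the rack complex in degrees $\ge1$ with a shifted copy of $C_*(X;X;\Lambda)$, the cellular chains of $\mathcal{N}_X=B(X;X)$.

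\textbf{Step 1: the chain map.} For $n\ge0$ define
\[
 \phi_n\colon C_n(X;X;\Lambda)\to C_{n+1}^R(X;\Lambda),\qquad \phi_n(s;x_1,\ldots,x_n)=(s,x_1,\ldots,x_n).
\]
Each $\phi_n$ is a bijection on bases, so $\phi$ is a degreewise isomorphism from $C_*(X;X;\Lambda)$ onto the truncation $C_{\ge1}^R$.

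\textbf{Step 2: compatibility with the faces.} In $\mathcal{N}_X$ the faces \eqref{pre:faces0}--\eqref{pre:faces1} are as follows. The face $d_i^0$ deletes $x_i$. The face $d_i^1$ replaces $s$ by $s\eta(x_i)=s*x_i$ and $x_j$ by $x_j*x_i$ for $j<i$. Under $\phi$ these are exactly the rack faces $d_{i+1}^0$ and $d_{i+1}^1$ of the tuple $(s,x_1,\ldots,x_n)$. The rack boundary \eqref{pre:rack-chain} has one extra term, from direction $1$, and it vanishes because $d_1^0=d_1^1$ there. So
\[
 \partial^R_{n+1}\phi_n=\sum_{i=1}^n(-1)^{i+1}(d_{i+1}^0-d_{i+1}^1)=-\phi_{n-1}\partial_n.
\]
Hence $\phi$ is an isomorphism of chain complexes $C_*(X;X;\Lambda)[1]\cong C^R_{\ge1}$, using the sign convention $d_{D[1]}=-d_D$.

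\textbf{Step 3: the homology statement.} The truncation $C^R_{\ge1}$ differs from $C^R$ only through $\partial_1\colon C_1^R\to C_0^R$, and this map is zero. Therefore $H_{n+1}(C^R_{\ge1})=H^R_{n+1}$ for all $n\ge0$. This includes $n=0$, since $H_1^R=C_1^R/\operatorname{im}\partial_2$ computed in either complex.

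\textbf{Step 4: cohomology and other coefficients.} The chain isomorphism is defined over $\Z$, so applying $\operatorname{Hom}(-,\Lambda)$ gives a cochain isomorphism between $C^*(X;X;\Lambda)$ and the rack cochains in degrees $\ge1$. The coboundary $C^0\to C^1$ is the dual of $\partial_1=0$, so $H_R^{n+1}\cong H^n(\mathcal{N}_X)$. Since the isomorphism is defined over $\Z$ and tensored or homed into coefficients, it is natural in $\Lambda$ and applies verbatim to any constant coefficient module.

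\textbf{Step 5: the localized statement.} Compose with Proposition~\ref{hom:covering}, taking $\Lambda=\Z_{(p)}$ (or any $\Z_{(p)}$-module), where $|H|$ is invertible; this gives \eqref{hom:shift-cover}.

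The only delicate point is Step 2: matching face indices and signs, and checking that the action on $s$ is $s\eta(x_i)=s*x_i$ via \eqref{pre:augmentation}. The rest is bookkeeping.
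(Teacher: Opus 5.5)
Your proposal is correct and follows essentially the same route as the paper. Both use the reindexing $C^R_{n+1}\leftrightarrow C_n(X;X)$, cancel the direction-one term and use $\partial_1=0$, then apply $\otimes\Lambda$ and $\operatorname{Hom}(-,\Lambda)$ followed by Proposition~\ref{hom:covering}. The differences are cosmetic: the paper puts the sign $(-1)^n$ into the map $\psi_n$, whereas you absorb it into the shift convention $d_{D[1]}=-d_D$, and you check the face matching directly where the paper cites Clauwens.
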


\begin{proof}
Reindex the positive-degree integral rack complex as
$D_n=C_{n+1}^R(X;\Z)$ for $n\ge0$, with $D_n=0$ for $n<0$,
retaining the rack differential.
The cited chain isomorphism, with the conventions of \eqref{pre:faces0}--\eqref{pre:rack-chain}, is
\begin{equation}\label{hom:psi}
 \psi_n\colon D_n\longrightarrow C_n(X;X;\Z),\qquad
 \psi_n(x_0',x_1,\ldots,x_n)
   =(-1)^n(x_0';x_1,\ldots,x_n).
\end{equation}
Since $\partial_1=0$, tensoring with $\Lambda$ and applying $\operatorname{Hom}_{\Z}(-,\Lambda)$ give \eqref{hom:shift-groups} for all $n\ge0$, naturally in the coefficients.
The same argument applies to any constant coefficient module.
The remaining assertions follow from Proposition~\ref{hom:covering}.
\end{proof}

For finite connected Alexander quandles whose defining automorphism has order coprime to the quandle order, the integral comparison $H_n(\cM;\Z)\cong H_{n+1}^R(X;\Z)$ in the range $n\ge1$ already appears in \cite[Lemma~5.9]{NosakaSurfaces2013}.

\subsection{Loop homology of a wedge}

We recall the algebraic fact needed to use the localized homotopy model.
For augmented algebras, $*$ denotes the unital free product with the copies of the coefficient ring identified.
We write $T_\Lambda(V)=\bigoplus_{m\ge0}V^{\otimes_\Lambda m}$ for the unital tensor algebra on a graded $\Lambda$-module $V$, and abbreviate $T_\Lambda(\Lambda u)$ to $T_\Lambda(u)$.

\begin{lemma}\label{hom:loop-wedge}
Let $U$ and $V$ be based simply connected CW spaces, and let $k$ be a field.
Looping the wedge inclusions induces an isomorphism of graded Hopf algebras
\begin{equation}\label{hom:loop-wedge-iso}
 H_*(\Omega U;k)*H_*(\Omega V;k)
    \xrightarrow{\ \cong\ }H_*(\Omega(U\vee V);k).
\end{equation}
In particular, for a based simply connected CW space $L$,
\begin{equation}\label{hom:sphere-free-product}
 H_*(\Omega(S^2\vee L);k)
       \cong T_k(u)*H_*(\Omega L;k),\qquad |u|=1,
\end{equation}
and $u$ is primitive.
No $2$-connectedness assumption on $L$ is needed.
\end{lemma}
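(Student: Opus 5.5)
The plan is to deduce the isomorphism \eqref{hom:loop-wedge-iso} from the classical computation of the loop homology of a wedge, and then specialize to $U=S^2$. First, the two wedge inclusions $U\to U\vee V$ and $V\to U\vee V$ loop to $H$-maps $\Omega U\to\Omega(U\vee V)$ and $\Omega V\to\Omega(U\vee V)$. These induce maps of connected graded Hopf algebras over $k$. By the universal property of the free product of augmented algebras, they assemble into an algebra map $\Phi\colon H_*(\Omega U;k)*H_*(\Omega V;k)\to H_*(\Omega(U\vee V);k)$. This map is compatible with coproducts, because both factor maps are, and the coproduct on the free product is the one determined by the factors. So it suffices to show that $\Phi$ is bijective.

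For bijectivity I would compare Poincaré series, and then identify $\Phi$ as an isomorphism via the Adams--Hilton or cobar model. Concretely, take Adams--Hilton models $A(U)=(T(s^{-1}\widetilde C_*U),d)$ and $A(V)$ built from CW structures with a single $0$-cell and no $1$-cells. Such structures exist up to homotopy because $U$ and $V$ are simply connected CW spaces. The wedge $U\vee V$ then has cells equal to the disjoint union of the cells of $U$ and $V$, apart from the basepoint. Its Adams--Hilton model can therefore be chosen as the free product $A(U)*A(V)$ of the two dg algebras, and this choice is natural with respect to the wedge inclusions. The Künneth-type theorem for free products of dg algebras over a field gives $H(A*B)\cong H(A)*H(B)$. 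This is proved by splitting each augmentation ideal as homology plus an acyclic complement and using the standard filtration by word length. Combining these yields \eqref{hom:loop-wedge-iso}.

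The main subtlety I expect is the naturality of the Adams--Hilton model under the inclusions, so that the abstract isomorphism really is $\Phi$. I would handle this by noting that the inclusions are cellular inclusions of subcomplexes, for which the Adams--Hilton construction can be chosen strictly compatibly. Alternatively, I would cite the standard result for wedges of simply connected spaces (Lemaire; Félix--Halperin--Thomas) directly. No connectivity beyond simple connectivity enters, since the models need only the absence of $1$-cells.

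For \eqref{hom:sphere-free-product}, take $U=S^2$. Then $H_*(\Omega S^2;k)\cong T_k(u)$ with $|u|=1$, for example by the Bott--Samelson theorem or because $S^2=\Sigma S^1$. Here $u$ is the Hurewicz image of the adjoint of $\iota_2$. This class is primitive because it lies in degree $1$ of a connected Hopf algebra, and its image under the Hopf map $\Phi$ remains primitive. Finally, no $2$-connectedness of $L$ is required, since only simple connectivity was used above.
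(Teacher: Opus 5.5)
Your proposal is correct, but it argues differently from the paper. For the wedge formula the paper only cites the classical coproduct formula of Clark--Smith. You instead reprove it: you choose Adams--Hilton models with $A(U\vee V)=A(U)*A(V)$ compatibly with the subcomplex inclusions, then apply the K\"unneth theorem for free products of augmented dg algebras over a field. That theorem holds because the augmentation ideals are $d$-stable, so $A*B$ splits as a direct sum of tensor products over alternating words. Your specialization to $S^2$ matches the paper's: Bott--Samelson gives $T_k(u)$, and $u$ is primitive for degree reasons. Your route makes it explicit that the abstract isomorphism is the map $\Phi$ induced by the looped wedge inclusions. It is also essentially the chain-level mechanism the paper uses later in Lemma~\ref{hom:pid-words}, with the cobar construction and the Szczarba comparison in place of Adams--Hilton models; there the word decomposition is needed over a principal ideal domain to track Tor terms. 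The paper's citation is shorter and avoids the bookkeeping about naturality of Adams--Hilton models. One small point: the Poincar\'e series comparison you mention first would not by itself give bijectivity. The bijectivity comes entirely from the natural model, so you can drop that step.
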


\begin{proof}
The first assertion is the classical coproduct formula for loop-space homology \cite[p.~245, Remark]{ClarkSmith1968}.
The second follows from the Bott--Samelson theorem \cite{BottSamelson1953}, which gives $H_*(\Omega S^2;k)\cong T_k(u)$ with $|u|=1$.
Since this Hopf algebra is connected, $u$ is primitive for degree reasons.
\end{proof}

The loop model also determines the Pontryagin Hopf algebra.

\begin{theorem}[Pontryagin Hopf algebra]\label{hom:series}
Let $p$ be a prime with $p\nmid|H|$, let $k$ be a field of characteristic $p$, and put $L=\Cofib(BH\to BG)$.
There is an isomorphism of graded Hopf algebras
\begin{equation}\label{hom:monoid-algebra}
 H_*(\cM;k)\cong T_k(u)*H_*(\Omega L;k),\qquad |u|=1,
 \qquad\Delta u=u\otimes1+1\otimes u.
\end{equation}
\end{theorem}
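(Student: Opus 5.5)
We derive \eqref{hom:monoid-algebra} from the localized loop model of Theorem~\ref{loc:main} combined with Lemma~\ref{hom:loop-wedge}. Since $k$ has characteristic $p$, it is a $\Z_{(p)}$-algebra. Hence $H_*(-;k)$ does not distinguish a space from its $p$-localization, and the same holds for loop spaces of simply connected spaces. The hypothesis $p\nmid|H|$ gives $\widetilde H_*(BH;\Z_{(p)})=0$, so Theorem~\ref{loc:main} applies with $R=\Z_{(p)}$ and yields an $A_\infty$ equivalence $\cM_{(p)}\simeq_{A_\infty}\Omega\bigl((S^2\vee L)_{(p)}\bigr)$.

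The first step is to show that the localization map $\cM\to\cM_{(p)}$ induces an isomorphism of Pontryagin Hopf algebras on $H_*(-;k)$. It is a map of $A_\infty$-spaces, because the localized structure is represented by $\Omega(Y_R)$ and $\cM\simeq_{A_\infty}\Omega Y$ by Theorem~\ref{int:main}; so the induced map respects the product and the coproduct. The space $\cM$ is simple, with $\pi_1(\cM)$ finitely generated abelian by Proposition~\ref{low:fundamental-group}. By the localization theorem for nilpotent spaces, the map is therefore a $\Z_{(p)}$-homology equivalence, and hence a $k$-homology equivalence. Concretely, one can apply $\Omega$ to $Y\to Y_{(p)}$, where both spaces are simply connected. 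An $A_\infty$ equivalence also induces an isomorphism of Hopf algebras on homology, since the Pontryagin product is determined up to homotopy by the $A_2$ part. We obtain
\[
H_*(\cM;k)\cong H_*\bigl(\Omega((S^2\vee L)_{(p)});k\bigr).
\]

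The second step removes the outer localization. The map $S^2\vee L\to(S^2\vee L)_{(p)}$ is a $\Z_{(p)}$-homology equivalence of simply connected spaces. Its loop map is therefore a $k$-homology equivalence, for example by comparing the Serre spectral sequences of the path fibrations (Zeeman comparison) or by localizing loop spaces as in \cite[Proposition~6.2.4]{MayPonto}. It is an H-map, so it induces a Hopf algebra isomorphism onto $H_*(\Omega(S^2\vee L);k)$.

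The third step applies \eqref{hom:sphere-free-product} of Lemma~\ref{hom:loop-wedge}. The space $L$ is a simply connected CW complex by Lemma~\ref{loc:L-properties}, so the lemma gives $T_k(u)*H_*(\Omega L;k)$ with $u$ primitive of degree one.

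The main obstacle is bookkeeping rather than new mathematics. We must make sure the chain of identifications consists of H-maps, or at least maps inducing algebra and coalgebra isomorphisms. The delicate link is the first one, passing from $\cM$ to $\Omega(Y_{(p)})$ through the zigzag of $A_\infty$ equivalences, together with the non-simply-connected localization of $\cM$. To handle it, I would work directly with $\Omega Y\to\Omega(Y_{(p)})$, which avoids localizing a space with nontrivial $\pi_1$. Its effect on $\pi_*\otimes\Z_{(p)}$ is an isomorphism, and both sides are simple H-spaces, so it is a $\Z_{(p)}$-homology isomorphism.
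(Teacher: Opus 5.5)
Your proposal is correct and follows the paper's route. The paper's proof combines Theorem~\ref{loc:main} at $R=\Z_{(p)}$ with Lemma~\ref{hom:loop-wedge}, noting that localization of connected nilpotent loop spaces preserves the Hopf algebra $H_*(-;k)$; your three steps, passing through the loop maps $\Omega Y\to\Omega(Y_{(p)})$ and $\Omega(S^2\vee L)\to\Omega((S^2\vee L)_{(p)})$, simply spell out that remark in detail.
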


\begin{proof}
Theorem~\ref{loc:main} and Lemma~\ref{hom:loop-wedge} give \eqref{hom:monoid-algebra}, since localization of connected nilpotent loop spaces preserves the homology Hopf algebra with coefficients in $k$.
\end{proof}

\subsection{Quandle normalization and degenerate summands}

This subsection applies to any finite quandle, without assumptions on $G$, $H$, or the prime.

We follow \cite[\S3]{CJKLS2003} for the definition of the quandle complex.
For $n\ge2$, let $C_n^D(X;\Lambda)$ be the submodule of $C_n^R(X;\Lambda)$ generated by tuples satisfying $x_i=x_{i+1}$ for some $i$.
Put $C_n^D=0$ for $n\le1$.
Quandle idempotency ensures that these modules form a subcomplex.
Write $H_n^D(X;\Lambda)=H_n(C_*^D(X;\Lambda))$ and define
\[
 C_n^Q(X;\Lambda)=C_n^R(X;\Lambda)/C_n^D(X;\Lambda)
 \quad(n\ge1).
\]
The differential out of degree one in this positive-degree complex is zero.
We denote its positive-degree homology and cohomology by $H_n^Q$ and $H_Q^n$, respectively.
We do not adopt a convention for $H_0^Q$.
All quandle Poincar\'e series below start in degree one, whereas the rack Poincar\'e series includes $H_0^R$.

\begin{proposition}\label{hom:degenerate}
Let $\Lambda$ be a principal ideal domain and write
\[
 R_i=H_i^R(X;\Lambda)\quad(i\geq0),\qquad
 Q_j=H_j^Q(X;\Lambda)\quad(j\geq1),\qquad R_0=\Lambda.
\]
For $n\geq1$, there is a splitting $H_n^R(X;\Lambda)\cong Q_n\oplus
H_n^D(X;\Lambda)$.
Moreover, the following sequence of $\Lambda$-module homomorphisms is split exact, although its splitting is generally nonnatural:
\begin{equation}\label{hom:degenerate-kunneth}
 0\to
 \bigoplus_{\substack{i+j=n-1\\i\ge0,\ j\ge1}}R_i\otimes_\Lambda Q_j
 \to H_n^D(X;\Lambda)\to
 \bigoplus_{\substack{i+j=n-2\\i\ge0,\ j\ge1}}
 \operatorname{Tor}_1^\Lambda(R_i,Q_j)\to0.
\end{equation}
An empty direct sum is understood to be zero.
\end{proposition}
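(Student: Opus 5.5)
This result is the Litherland--Nelson splitting of the degenerate complex, and I would prove it in three steps. First, split the rack complex. Second, build a chain-level Künneth isomorphism for $C^D_*$. Third, apply the algebraic Künneth theorem over the principal ideal domain $\Lambda$.

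For the first splitting $H_n^R\cong Q_n\oplus H_n^D$, I will take the normalization map $\alpha\colon C_n^R\to C_n^R$ of Litherland--Nelson. One standard choice is $\alpha(x_1,\dots,x_n)=(x_1,x_2-x_1,x_3-x_2,\dots)$, expanded multilinearly in formal differences. I need to check three things: that $\alpha$ is a chain map, that it vanishes on $C^D_*$, and that it induces the identity modulo $C^D_*$. These checks use $x*x=x$ together with rack self-distributivity. Granting them, $\alpha$ gives a chain retraction of $C^R_*$ onto a complement of $C^D_*$ isomorphic to $C^Q_*$. Hence $0\to C^D_*\to C^R_*\to C^Q_*\to0$ splits as complexes, and the homology splitting follows. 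In degree one, $C^D_1=0$, so the splitting holds trivially there.

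Next, I will show that there is a chain isomorphism
\[
 C^D_*\cong\bigl(C^R_*\otimes C^Q_*\bigr)[1]
\]
in total degrees $n\ge2$. Here $C^R_*$ includes degree zero with $C^R_0=\Lambda$, and $C^Q_*$ is the positive-degree complex. The candidate is the Litherland--Nelson map: send $\mathbf a\otimes\mathbf b\in C^R_i\otimes C^Q_j$ to a signed, normalized "doubling" of the tuple $(\mathbf a,b_1,b_1,b_2,\dots,b_j)$, with the repeated entry inserted at the junction. I would first establish a filtration of $C^D_*$ by the position of the first repeated pair $x_i=x_{i+1}$. I would then check that the associated graded pieces match the tensor product complex. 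Self-distributivity is used to move the $d^1$ faces past the repeated pair; the faces that hit the doubled pair cancel, because $d^0_i$ and $d^1_i$ agree on $(x,x)$ by idempotency.

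This chain-level identification is the main obstacle, since the signs and the compatibility with both face maps must be verified carefully. My first attempt is to use $\alpha$ itself to normalize the right-hand factor, so that the map is an honest chain map rather than a map defined only up to filtration. If that fails, I fall back on a spectral sequence of the filtration that collapses for degree reasons.

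Finally, $C^R_*$ and $C^Q_*$ are complexes of free $\Lambda$-modules: $C^Q_n$ is free on the tuples with no adjacent repeats, by the first step. So the Künneth theorem over the principal ideal domain $\Lambda$ gives a split short exact sequence for $H_{n-1}(C^R\otimes C^Q)$. The sequence has $\bigoplus_{i+j=n-1}R_i\otimes Q_j$ on the left and $\bigoplus_{i+j=n-2}\operatorname{Tor}_1^\Lambda(R_i,Q_j)$ on the right, with $i\ge0$ and $j\ge1$. The degree shift by one in the isomorphism of the second step accounts for the indices $n-1$ and $n-2$ in \eqref{hom:degenerate-kunneth}. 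The splitting is the usual nonnatural Künneth splitting.
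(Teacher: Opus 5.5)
\textbf{There is a genuine gap in your Step~2.} Your architecture (Litherland--Nelson splitting, a chain isomorphism $C^D_*\cong(C^R_*\otimes C^Q_*)[1]$, then the Künneth theorem over a principal ideal domain) matches the paper's, and your degree bookkeeping giving $n-1$ and $n-2$ is right. Steps~1 and~3 are fine. But the chain isomorphism of Step~2 is the whole content of the proposition. The paper does not construct it; it imports it from Przytycki--Putyra as $\widehat C^{\mathrm{PP}}[2]\otimes_\Lambda C^{N,\mathrm{PP}}\cong C^{D,\mathrm{PP}}$. (That tensor decomposition is due to them, not to Litherland--Nelson.) Your sketch does not supply this isomorphism, and the candidate you describe is not a chain map.

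\textbf{Why the doubling map fails.} The $d^1$-faces taken in the $\mathbf b$-block act on every earlier entry, including $\mathbf a$, whereas the tensor differential leaves $\mathbf a$ alone. Already for $\mathbf a=(a)$ and $\mathbf b=(b_1,b_2)$, with $c=b_1*b_2$, one finds
\[
 \partial(a,b_1,b_1,b_2)=(a,b_1,b_1)-(a*b_2,c,c).
\]
The tensor side, however, demands $\pm\bigl[(a,b_1,b_1)-(a,c,c)\bigr]$.

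Normalizing $\mathbf b$ by $\alpha$ cannot repair this, because $\alpha$ never touches $\mathbf a$. With $\alpha(\mathbf b)=(b_1,b_2)-(b_1,b_1)$, the boundary of the image becomes $(a*b_1,b_1,b_1)-(a*b_2,c,c)$, which is still wrong in general. A genuine correction term is needed: $(a,b_1,b_1,b_2)-(a,b_2,c,c)$ has boundary $(a,b_1,b_1)-(a,c,c)$. So the isomorphism is not a ``normalized doubling.''

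\textbf{The filtration is also misplaced.} Since $\mathbf a$ may contain repeats, the inserted pair is the \emph{last} repeat in your tuple, not the first. For the first-repeat filtration, the associated graded pieces are twisted complexes $C_*(S_p;X)$, where $S_p$ is the set of nondegenerate $p$-tuples with the diagonal action. These are not tensor products. For $p=1$ the piece is $C_*(X;X)$, whose homology in degree $q$ is $H^R_{q+1}(X;\Lambda)$ by Proposition~\ref{hom:shift}. The tensor product would instead give $\Lambda[X]\otimes H^R_q(X;\Lambda)$; these differ already for $q=0$ when $X$ is connected with $|X|>1$.

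\textbf{The spectral-sequence fallback does not close the gap.} Nothing forces collapse ``for degree reasons.'' Even a collapsing spectral sequence only gives a filtration of $H^D_n$ with the right subquotients, not the split short exact sequence \eqref{hom:degenerate-kunneth}. The unresolved extension problems are exactly the torsion orders ($C_2$ versus $C_4$, for instance) that the later sections depend on.

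\textbf{How to complete the argument.} You have two options:
\begin{itemize}
\item Cite the Przytycki--Putyra chain isomorphism, as the paper does. When translating conventions, check three things: their degree-$a$ generators lie in $X^{a+1}$; their differential is the negative of ours; and the augmented term $\Lambda$ in degree $-1$ receives the zero map $x\mapsto 1-1$ for trivial coefficients.
\item Or actually construct and verify an honest chain isomorphism, including all the correction terms, before applying Künneth.
\end{itemize}
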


\begin{proof}
The chain-level splitting and tensor-product description are given in \cite[Theorems~2.7 and~7.1]{PrzytyckiPutyra2016}.
In their conventions, indicated by the superscript $\mathrm{PP}$, tuples in $X^{a+1}$ have degree $a$.
Write $C^{N,\mathrm{PP}}$ and $C^{D,\mathrm{PP}}$ for their normalized and degenerate complexes, and $\widehat C^{\mathrm{PP}}$ for the rack complex extended by $\Lambda$ in degree $-1$.
For trivial coefficients, the added differential $\Lambda[X]\to\Lambda$ sends $x$ to $1-1=0$.
Their differential in nonnegative degrees is the negative of ours, so reindexing and multiplying by $(-1)^a$ in degree $a$ gives
\[
 H_a(\widehat C^{\mathrm{PP}})=R_{a+1}\quad(a\ge-1),\qquad
 H_b(C^{N,\mathrm{PP}})=Q_{b+1}\quad(b\ge0).
\]
Apply the cited chain isomorphism $\widehat C^{\mathrm{PP}}[2]\otimes_\Lambda C^{N,\mathrm{PP}}
\cong C^{D,\mathrm{PP}}$ and then the split K\"unneth theorem for free complexes.
The tensor terms are indexed by $i+j=n-1$, and the Tor terms by $i+j=n-2$.
This gives \eqref{hom:degenerate-kunneth}.
\end{proof}

\begin{corollary}\label{hom:quandle-series}\label{hom:rack-determines-quandle}
Over a principal ideal domain $\Lambda$, the rack homology groups of a finite quandle determine the module isomorphism classes of its quandle homology groups in all positive degrees.
In particular, if $\Lambda=k$ is a field, put $r_n=\dim_k H_n^R(X;k)$, $q_n=\dim_k H_n^Q(X;k)$ for $n\ge1$, and $r_0=1$.
Then
\begin{equation}\label{hom:quandle-recurrence}
 q_n=r_n-\sum_{j=1}^{n-1}r_{n-1-j}q_j\qquad(n\ge1).
\end{equation}
The positive-degree quandle series $Q_{X,+}(t)=\sum_{n\ge1}q_nt^n$ and the series $R_X(t)=\sum_{n\ge0}r_nt^n$ satisfy
\begin{equation}\label{hom:rack-quandle-series}
 Q_{X,+}(t)=\frac{R_X(t)-1}{1+tR_X(t)}.
\end{equation}
We also have $q_n=\dim_k H_Q^n(X;k)$.
\end{corollary}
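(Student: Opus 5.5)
The plan is to read everything off Proposition~\ref{hom:degenerate}, proceeding by induction on $n$. For the first assertion, over a PID $\Lambda$ we have $H_n^R\cong Q_n\oplus H_n^D$. The split exact sequence \eqref{hom:degenerate-kunneth} determines $H_n^D$ up to isomorphism from $R_i$ with $i\le n-1$ and $Q_j$ with $j\le n-1$, because the sequence splits. We are working with finite quandles, so each $R_i$ is a finitely generated $\Lambda$-module. Since $H_n^R=Q_n\oplus H_n^D$ with $H_n^D$ already known, $Q_n$ is determined up to isomorphism by cancellation for finitely generated modules over a PID. This cancellation uses the structure theorem: compare the free rank and the multiplicities of each elementary divisor $\Lambda/(\pi^e)$. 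Starting from $Q_1\cong R_1$ (since $H_1^D=0$), induction on $n$ gives the first claim.

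For the field case, every Tor term vanishes and every tensor product of dimensions multiplies. Taking dimensions in the splitting gives
\[
 r_n=q_n+\sum_{i+j=n-1,\ j\ge1}r_iq_j ,
\]
which is \eqref{hom:quandle-recurrence} after writing $i=n-1-j$. Summing over $n\ge1$ with $t^n$ and using $r_0=1$ gives
\[
 R_X(t)-1=Q_{X,+}(t)+t\,R_X(t)\,Q_{X,+}(t).
\]
The extra factor of $t$ comes from the index shift $i+j=n-1$. Solving this identity in formal power series yields \eqref{hom:rack-quandle-series}, because $1+tR_X(t)$ has constant term $1$ and is therefore invertible.

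Finally, for cohomology, $C^Q_*(X;\Z)$ is a complex of finitely generated free abelian groups (tuples without adjacent repeats form a basis). Universal coefficients over the field $k$ then give $H_Q^n(X;k)\cong\operatorname{Hom}_k(H_n^Q(X;k),k)$. These spaces are finite-dimensional, so $q_n=\dim_k H_Q^n(X;k)$.

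The only delicate step is the cancellation in the PID case, which requires finite generation. This holds here because $X$ is finite. No step looks like a genuine obstacle.
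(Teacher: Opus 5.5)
Your proposal is correct and follows essentially the same route as the paper: induction on $n$ via the splitting $H_n^R\cong Q_n\oplus H_n^D$ and the split sequence of Proposition~\ref{hom:degenerate}, with direct-sum cancellation justified by the structure theorem for finitely generated modules over a PID. As in the paper, you then count dimensions over a field to obtain the recurrence and the series identity, and apply the universal coefficient theorem for the cohomological dimensions; you simply make explicit a few details the paper leaves implicit, such as the base case $Q_1\cong R_1$, finite generation, and the freeness of $C^Q_*$.
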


\begin{proof}
Every term $Q_j$ in \eqref{hom:degenerate-kunneth} has $j<n$.
The structure theorem for finitely generated $\Lambda$-modules and the splitting in Proposition~\ref{hom:degenerate} determine $Q_n$ by direct-sum cancellation from $R_n$ and the already determined groups $Q_j$.
Over a field, Tor vanishes; taking dimensions gives \eqref{hom:quandle-recurrence}, and summing gives \eqref{hom:rack-quandle-series}.
The dimensions of the cohomology groups follow from the universal coefficient theorem.
\end{proof}

\subsection{Additive formulas and torsion orders}

We first give a common formula for the additive groups, then specialize it to dimensions over a field and to counts of cyclic summands of each order.
All series are interpreted as formal power series.

For a principal ideal domain $\Lambda$, we record separately the rank of the free part and the cyclic direct summands $\Lambda/(\pi^e)$ corresponding to each prime power $\pi^e$.
Let $\mathcal G_\Lambda$ be the $t$-adic completion of the commutative $\Z[t]$-algebra whose underlying $\Z[t]$-module has basis $1$ and $\epsilon_{\pi,e}$.
Here $\pi$ runs through representatives of the prime elements of $\Lambda$ up to associates, and $e\geq1$.
Define multiplication by
\begin{equation}\label{hom:pid-bookkeeping-product}
 \epsilon_{\pi,e}\epsilon_{\pi,f}
     =(1+t)\epsilon_{\pi,\min(e,f)},\qquad
 \epsilon_{\pi,e}\epsilon_{\sigma,f}=0\quad(\pi\not\sim\sigma).
\end{equation}
For the series considered below, each coefficient involves only finitely many basis symbols.
For a finitely generated $\Lambda$-module $A$, define $[A]$ as the free rank of $A$ times $1$, plus one $\epsilon_{\pi,e}$ for each cyclic summand $\Lambda/(\pi^e)$.
By the structure theorem, this notation is independent of the chosen decomposition.
For a field $k$, this convention gives $\mathcal G_k=\Z[[t]]$ and $[A]=\dim_k A$.
Formula~\eqref{hom:pid-bookkeeping-product} records both tensor and Tor terms:
\[
 [A][B]=[A\otimes_\Lambda B]
          +t[\operatorname{Tor}_1^\Lambda(A,B)].
\]
The term multiplied by $t$ records the Tor contribution, which is absent over a field.

\begin{lemma}[Additive homology formulas]\label{hom:pid-words}
Let $\Lambda$ be a principal ideal domain and $L$ a based simply connected CW space, and assume that $H_n(\Omega L;\Lambda)$ is finitely generated for every $n$.
Put
\[
 \kappa(t)=\sum_{n\geq0}[H_n(\Omega L;\Lambda)]t^n,
 \qquad
 \mu(t)=\sum_{n\geq0}[H_n(\Omega(S^2\vee L);\Lambda)]t^n.
\]
Then, in $\mathcal G_\Lambda$,
\begin{equation}\label{hom:pid-loop-series}
 \mu(t)=\sum_{m\geq0}t^m\kappa(t)^{m+1}
       =\frac{\kappa(t)}{1-t\kappa(t)}.
\end{equation}
For the quandle data of this section, take $L=\Cofib(BH\to BG)$, a prime $p\nmid|H|$, and $\Lambda=\Z_{(p)}$ or a field of characteristic $p$.
The finite-generation hypothesis then holds, and the series
\[
 \rho(t)=\sum_{n\geq0}[H_n^R(X;\Lambda)]t^n,
 \qquad
 \chi(t)=\sum_{n\geq1}[H_n^Q(X;\Lambda)]t^n
\]
satisfy
\begin{equation}\label{hom:pid-series}
 \rho(t)=\frac{1}{1-t\kappa(t)},\qquad
 \chi(t)=\frac{t\kappa(t)}{1+t-t\kappa(t)}.
\end{equation}
These formulas record module isomorphism classes, including torsion orders; they make no assertion about cup products or homology algebras over a principal ideal domain.
\end{lemma}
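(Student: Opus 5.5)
The plan has three steps: compute $\mu$ from $\kappa$ via a chain-level tensor-algebra model, transfer to rack homology using the localized loop model, and then obtain $\chi$ by inverting the degenerate splitting.

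\emph{Step 1: the loop formula.} Over a field, Lemma~\ref{hom:loop-wedge} already gives $H_*(\Omega(S^2\vee L);k)\cong T_k(u)*H_*(\Omega L;k)$. As a graded vector space, $T(u)*A$ with $\bar A$ the augmentation ideal is spanned by alternating words. Since $T(u)*A\cong A\otimes T(u\otimes A)$, its series is $\kappa\sum_m(t\kappa)^m$. Over a principal ideal domain I would work at chain level. I would use a free $\Lambda$-model for $C_*(\Omega(S^2\vee L);\Lambda)$, namely the coproduct of $T_\Lambda(u)$ with a free dga model $C$ of $C_*(\Omega L)$. Adams--Hilton type models, or cobar on $C_*(S^2\vee L)=C_*(S^2)\vee C_*(L)$, make this a free product. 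As a chain complex, this is quasi-isomorphic to $\bigoplus_{m\ge0}C\otimes(\Lambda u\otimes C)^{\otimes m}$, that is, to $(m+1)$-fold tensor powers of free complexes shifted by $m$. The iterated split K\"unneth theorem then gives the homology of $C^{\otimes(m+1)}$. Its bookkeeping class is exactly $\kappa^{m+1}$ in $\mathcal G_\Lambda$, because $[A][B]=[A\otimes B]+t[\operatorname{Tor}_1(A,B)]$ and the product in $\mathcal G_\Lambda$ is associative. I have to check this associativity of the bookkeeping for iterated K\"unneth, including $\operatorname{Tor}$ of $\operatorname{Tor}$ terms vanishing in the expected way. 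This reduces to the computation for cyclic modules: $\Lambda/\pi^e\otimes\Lambda/\pi^f$ and $\operatorname{Tor}$ are both $\Lambda/\pi^{\min(e,f)}$, and this is encoded by \eqref{hom:pid-bookkeeping-product}. Summing over $m$ gives \eqref{hom:pid-loop-series}. This chain-level free-product identification, valid over $\Lambda$ and not just over fields, is the step I expect to be the main obstacle. I would first try the cobar construction on a coalgebra model of the wedge, where the free product is visible on the nose.

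\emph{Step 2: rack series.} For $\Lambda=\Z_{(p)}$ or a field of characteristic $p$, Proposition~\ref{hom:shift} gives $H^R_{n+1}(X;\Lambda)\cong H_n(\cM;\Lambda)$. Theorem~\ref{loc:main} gives $\cM_{(p)}\simeq\Omega((S^2\vee L)_{(p)})$, and $\Lambda$-homology is unchanged by localization. Hence $\rho=1+t\mu=1+t\kappa/(1-t\kappa)=1/(1-t\kappa)$. Finite generation follows because $L$ has finite type by Lemma~\ref{loc:L-properties}, so $\Omega L$ does as well (by the Serre spectral sequence and simple connectivity).

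\emph{Step 3: quandle series.} Proposition~\ref{hom:degenerate} gives, in $\mathcal G_\Lambda$, the identity $\rho-1=\chi+t(\rho\chi)$. Here the tensor terms of bidegree $(i,j)$ with $i+j=n-1$ carry the factor $t$, and the $\operatorname{Tor}$ terms with $i+j=n-2$ carry the factor $t^2$; this is exactly $t\cdot[R_i][Q_j]$. So $\chi=(\rho-1)/(1+t\rho)$. Substituting $\rho=1/(1-t\kappa)$ gives $\rho-1=t\kappa/(1-t\kappa)$ and $1+t\rho=(1+t-t\kappa)/(1-t\kappa)$, hence $\chi=t\kappa/(1+t-t\kappa)$. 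Division is legitimate since the denominators have constant term $1$ in the $t$-adically complete ring.
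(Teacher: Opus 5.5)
Your plan is correct and follows the paper's proof essentially step by step. The paper also uses the cobar construction on a $1$-reduced simplicial model of $L\vee S^2$, where $\mathcal C*T_\Lambda(u)$ appears on the nose. The obstacle you flag is settled there by the integral Szczarba comparison, a quasi-isomorphism of bounded-below free complexes and hence preserved by $-\otimes\Lambda$. The rest matches yours: the word splitting $\bigoplus_m\mathcal C^{\otimes(m+1)}[m]$, split K\"unneth recorded in $\mathcal G_\Lambda$, $\rho=1+t\mu$, and $\rho-1=\chi+t\rho\chi$. The only divergence is finite generation: the paper exhibits $H_n(\Omega L;\Lambda)$ as a retract of $H_n(\cM;\Lambda)$, which is finitely generated because $\cM$ has finitely many cells in each degree, whereas you use the Serre spectral sequence; both arguments work. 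No ``vanishing of Tor of Tor'' is needed. Iterated Tor terms are simply recorded by higher powers of $t$ in the associative product of $\mathcal G_\Lambda$, and only $\operatorname{Tor}_{\ge2}$ vanishes over a principal ideal domain.
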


\begin{proof}
Choose a $1$-reduced simplicial model $K$ of $L$, meaning that it has one vertex and no nondegenerate $1$-simplices.
Put $\mathcal C=\Omega C_*(K;\Lambda)$, where $C_*$ denotes normalized chains and $\Omega$ the cobar construction; then $\mathcal C_0=\Lambda$.
For $S=\Delta[2]/\partial\Delta[2]$, the reduced chains of $K\vee S$ and their reduced Alexander--Whitney coproduct split as direct sums, giving an isomorphism of differential graded algebras
\begin{equation}\label{hom:cobar-wedge}
 \Omega C_*(K\vee S;\Lambda)\cong\mathcal C*T_\Lambda(u),
 \qquad |u|=1,\quad du=0.
\end{equation}
The natural integral Szczarba comparison with chains on the Kan loop group \cite[Theorem~3.1 and the following paragraph]{HPST2006} has an acyclic bounded-below cone of free abelian groups, hence a contractible cone.
It remains a quasi-isomorphism after tensoring with $\Lambda$, so \eqref{hom:cobar-wedge} models loop chains compatibly with the inclusions.

The differential preserves the number $m$ of occurrences of $u$.
Separating the words $c_0uc_1u\cdots uc_m$, with each $c_i$ allowed to be the unit, gives a chain isomorphism
\begin{equation}\label{hom:pid-word-complex}
 \mathcal C*T_\Lambda(u)
    \cong\bigoplus_{m\geq0}
          \mathcal C^{\otimes_\Lambda(m+1)}[m].
\end{equation}
On homogeneous factors, the isomorphism from the $m$th summand is
\[
 c_0\otimes\cdots\otimes c_m\longmapsto
 (-1)^{\sum_{i=0}^m(m-i)|c_i|}c_0uc_1u\cdots uc_m.
\]
The split K\"unneth theorem, recorded by \eqref{hom:pid-bookkeeping-product}, gives \eqref{hom:pid-loop-series}.
Only $m\le n$ contributes in degree $n$, so the resulting homology groups are finitely generated.

For the quandle application, Theorem~\ref{loc:main} and localization identify $H_*(\cM;\Lambda)$ with $H_*(\Omega(S^2\vee L);\Lambda)$.
The looped wedge retraction makes $H_n(\Omega L;\Lambda)$ a direct summand of $H_n(\cM;\Lambda)$, which is finitely generated since $\cM$ has finitely many cells in each degree.
Proposition~\ref{hom:shift} and the split K\"unneth formula \eqref{hom:degenerate-kunneth} give, respectively,
\[
 \rho=1+t\mu,\qquad \rho-1=\chi+t\rho\chi.
\]
The product in $\mathcal G_\Lambda$ includes the Tor terms; cancellation is justified by Corollary~\ref{hom:rack-determines-quandle}.
Solving these identities gives \eqref{hom:pid-series}; the denominators have constant term $1$ and hence are invertible in $\mathcal G_\Lambda$.
\end{proof}

For the quandle data above and a field $k$ of characteristic $p$ with $p\nmid|H|$, write
\[
 A_X(t)=\sum_{n\geq0}\dim_k H_n(\Omega L;k)t^n,
 \qquad
 P_\cM(t)=\sum_{n\geq0}\dim_k H_n(\cM;k)t^n.
\]
Lemma~\ref{hom:pid-words} ensures finite-dimensionality.
With $R_X$ and $Q_{X,+}$ as in Corollary~\ref{hom:quandle-series}, formulas \eqref{hom:pid-loop-series} and \eqref{hom:pid-series} apply with $\Lambda=k$ and the identifications
\[
 (\kappa,\mu,\rho,\chi)=(A_X,P_\cM,R_X,Q_{X,+}).
\]
The universal coefficient theorem gives the same dimension series for cohomology.
Clauwens obtained the corresponding rack recurrence under a Leray--Hirsch hypothesis and applied it to dihedral quandles \cite[Section~4.3, Remark~4.11]{Clauwens}.

\begin{corollary}[Multiplicities of cyclic summands]\label{hom:exponent-profiles}
Retain the quandle data and the good prime $p$ of Lemma~\ref{hom:pid-words}.
Put $R=\Z_{(p)}$ and $K=\Omega(L_{(p)})$.
For an integer $e\ge1$ and a finite abelian $p$-group $A$, let $\nu_e(A)$ be the number of cyclic direct summands $C_{p^f}$ with $f\ge e$.
The groups $H_n(K;R)$ for $n>0$ and $H_n^Q(X;R)$ for $n\ge2$ are finite abelian $p$-groups, and $H_1^Q(X;R)=R$.
The cumulative exponent series
\[
 K_{\ge e}(t)=\sum_{n\ge1}\nu_e(H_n(K;R))t^n,\qquad
 Q_{\ge e}(t)=\sum_{n\ge2}\nu_e(H_n^Q(X;R))t^n
\]
satisfy
\begin{equation}\label{hom:exponent-profile-series}
 Q_{\ge e}(t)=\frac{t(1+t)K_{\ge e}(t)}{1-t(1+t)K_{\ge e}(t)}.
\end{equation}
The series counting cyclic direct summands of order exactly $p^e$ in quandle homology is $Q_{\ge e}-Q_{\ge e+1}$.
Moreover, with coefficients in $\F_p$,
\begin{equation}\label{hom:exponent-profile-uct}
 A_X(t)=1+(1+t)K_{\ge1}(t),\qquad
 Q_{X,+}(t)=t+(1+t)Q_{\ge1}(t).
\end{equation}
\end{corollary}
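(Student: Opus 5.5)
Our approach is to work entirely inside the bookkeeping ring $\mathcal G_R$ of Lemma~\ref{hom:pid-words}, with $R=\Z_{(p)}$, and then extract the coefficients of the symbols $\epsilon_{p,e}$. First we settle the structural claims. By Lemma~\ref{loc:L-properties}, $\widetilde H_*(L;\Q)=0$ and $L$ is simply connected of finite type, so $L_{(p)}$ is rationally trivial. Hence $K=\Omega(L_{(p)})$ has $\widetilde H_*(K;\Q)=0$. Since $H_*(K;R)=H_*(\Omega L;R)$ is finitely generated (Lemma~\ref{hom:pid-words}), each $H_n(K;R)$ with $n>0$ is a finite $p$-group. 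Writing $\kappa=1+\sum_{e}k_e\,\epsilon_{p,e}$ with $k_e\in t\Z[[t]]$ the exact-order series, formula \eqref{hom:pid-series} then expresses $\chi$. We also have $H_1^Q=H_1^R=H_0(\cM)=R$ (the $t$-coefficient of $\rho$, or directly $\partial_1=0$). Since $\chi-t$ involves no free part in higher degrees, as we check below, the groups $H_n^Q(X;R)$ for $n\ge 2$ are finite $p$-groups.

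The key algebraic step is to pass to cumulative variables. Put $\delta_e=\epsilon_{p,e}-\epsilon_{p,e+1}$, i.e.\ change basis so that $\epsilon_{p,e}=\sum_{f\ge e}\delta_f$ formally. It is cleaner to use the observation that in $\mathcal G_R$ the elements $\eta_e:=\epsilon_{p,e}$ multiply by the "min'' rule. Consequently the map sending an element $a_0+\sum_e a_e\epsilon_{p,e}$ to the family $(a_0+\sum_{f\ge e}a_f\,x)_{e}$ behaves like evaluation at independent idempotents. Concretely, define for each $e$ the ring homomorphism $\phi_e:\mathcal G_R\to\Z[[t]][x]/(x^2-(1+t)x)$ by $\epsilon_{p,f}\mapsto x$ for $f\ge e$ and $\epsilon_{p,f}\mapsto0$ for $f<e$. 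The min rule shows that $\phi_e$ is multiplicative, and the coefficient of $x$ in $\phi_e([A])$ for a finite $p$-group $A$ is $\nu_e(A)$. Further, $x$ behaves like $(1+t)$ times an idempotent $\varepsilon=x/(1+t)$, so the target is $\Z[[t]]\times\Z[[t]]$ via $\varepsilon\mapsto(1,0)$ and $1-\varepsilon\mapsto(0,1)$. On the $\varepsilon$-component, $\phi_e(\kappa)$ becomes $1+(1+t)K_{\ge e}$; on the complementary component it becomes $1$.

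Next we apply $\phi_e$ to $\chi=t\kappa/(1+t-t\kappa)$. On the complementary component we get $t/(1+t-t)=t$, the free class in degree one. On the $\varepsilon$-component, with $\kappa\mapsto 1+(1+t)K$ where $K=K_{\ge e}$, we get
\[
 \frac{t(1+(1+t)K)}{1-t(1+t)K}.
\]
Subtracting the common contribution $t$ leaves
\[
 \frac{t(1+t)K(1+t)}{1-t(1+t)K}.
\]
This component equals $t+(1+t)Q_{\ge e}$, because $\phi_e(\chi)=t+Q_{\ge e}\,x=t+(1+t)Q_{\ge e}\,\varepsilon$. Dividing by $1+t$ yields \eqref{hom:exponent-profile-series}. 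The same computation shows that the free part of $\chi$ is exactly $t$, which justifies the finiteness claim above. The exact-order statement is immediate from $\nu_e-\nu_{e+1}$.

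Finally, for \eqref{hom:exponent-profile-uct}, the universal coefficient theorem shows that each cyclic summand $C_{p^f}$ in degree $n$ contributes one $\F_p$ dimension in degree $n$ and one in degree $n+1$, while a free $R$ summand contributes one. This gives $A_X=1+(1+t)K_{\ge1}$, and similarly $Q_{X,+}=t+(1+t)Q_{\ge1}$, using $H_1^Q=R$. Equivalently, this is $\phi_1$ followed by the specialization $x\mapsto 1+t$ on the $\varepsilon$-component. The main obstacle is justifying that $\phi_e$ is a well-defined continuous ring homomorphism on $\mathcal G_R$ and that $t$-adic inversion commutes with it. This is routine, since the denominators have constant term $1$.
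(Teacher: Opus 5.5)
Your proposal is correct and is essentially the paper's own proof. Your map $\phi_e$ into $\Z[[t]][x]/(x^2-(1+t)x)\cong\Z[[t]]\times\Z[[t]]$ is exactly the pair formed by the paper's two specializations, applied to \eqref{hom:pid-series} with the universal coefficient theorem giving \eqref{hom:exponent-profile-uct}: the complementary component is the rank specialization $\epsilon_{p,f}\mapsto0$, which sends $\chi\mapsto t$ and gives finiteness in degrees $n\ge2$, and the $\varepsilon$-component is the homomorphism $\epsilon_{p,f}\mapsto1+t$ for $f\ge e$. The abandoned change of basis $\delta_e$ is unnecessary, since the infinite telescoping sum has no meaning in $\mathcal G_R$, and can simply be deleted.
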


\begin{proof}
Localization identifies $H_*(\Omega L;R)$ with $H_*(K;R)$.
By Lemma~\ref{hom:pid-words} and $L_{\Q}\simeq *$ from Lemma~\ref{loc:L-properties}, the positive-degree homology groups of $K$ are finite $p$-groups.
The rank specialization $\epsilon_{p,f}\mapsto0$ sends $\kappa$ to $1$ and, by \eqref{hom:pid-series}, $\chi$ to $t$.
Thus $H_n^Q(X;R)$ is finite for $n\ge2$, while connectedness gives $H_1^Q(X;R)=R$.

For each $e\ge1$, the rule $\epsilon_{p,f}\mapsto1+t$ for $f\ge e$ and $\epsilon_{p,f}\mapsto0$ otherwise, fixing $t$ and $1$, defines a continuous ring homomorphism to $\Z[[t]]$ by \eqref{hom:pid-bookkeeping-product}.
It sends $\kappa$ and $\chi$ to $1+(1+t)K_{\ge e}$ and $t+(1+t)Q_{\ge e}$, respectively.
Applying it to \eqref{hom:pid-series} gives \eqref{hom:exponent-profile-series}.
Successive differences count exact orders, and the universal coefficient theorem gives \eqref{hom:exponent-profile-uct}.
\end{proof}

\paragraph{Recovering integral homology.}\label{hom:alexander-integral-recovery}
For a finite connected Alexander quandle $X$ of order $N$, multiplication by $N$ annihilates $H_n^Q(X;\Z)$ for $n\ge2$ \cite[Corollary~6.2]{Nosaka2013}.
These finitely generated groups are therefore finite and are determined by their localizations at the primes dividing $N$.
In particular, when $N=p^a$, localization at $p$ determines the integral groups.
In degree one, connectedness gives $H_1^Q(X;\Z)\cong\Z$.

\subsection{The rack cup product}

We now use the marked fiber model to determine the ordinary cup product on $H_R^*(X;k)=H^*(\mathcal R_X;k)$, which agrees with the cubical rack cup product \cite[Sections~2.8 and~3]{Clauwens}.

\begin{proposition}[Rack cohomology as a shuffle algebra]\label{hom:rack-shuffle}
Let $p$ be a prime with $p\nmid|H|$, let $k$ be a field of characteristic $p$, and put $K=\Omega L$, where $L=\Cofib(BH\to BG)$.
There is a noncanonical isomorphism of graded unital algebras
\begin{equation}\label{hom:rack-shuffle-algebra}
 H_R^*(X;k)\cong\operatorname{Sh}_k\bigl(sH^*(K;k)\bigr).
\end{equation}
Here $s$ raises degrees by one, so $(sH^*(K;k))^n=H^{n-1}(K;k)$ for $n\ge1$, and $\operatorname{Sh}_k(V)$ is $\bigoplus_{m\ge0}V^{\otimes m}$ with the shuffle product and Koszul signs computed from the degrees in $V$.
Only the graded vector space $H^*(K;k)$ is used, including the suspension of $H^0(K;k)$ as a letter in degree one.
\end{proposition}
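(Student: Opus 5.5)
The plan is to identify $\mathcal R_X$ $p$-locally with the homotopy fiber of a map into $Y_{(p)}\simeq(S^2\vee L)_{(p)}$, and then obtain the cohomology ring from an Eilenberg--Moore computation over the path-loop fibration. By \eqref{int:rack-fibre}, $\mathcal R_X\simeq\hofib(j:BG\to Y)$. Let $P Y$ be the path space. Then $\mathcal R_X$ is the pullback of $PY\to Y$ along $j$. So $H^*(\mathcal R_X;k)$ should be computed by $\operatorname{Tor}_{C^*(Y;k)}(C^*(BG;k),k)$.

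First, reduce everything mod $p$ using Theorem~\ref{loc:main} and Lemma~\ref{loc:L-properties}. Both $\ell:BG\to L$ and $Y\to Y_{(p)}\simeq(S^2\vee L)_{(p)}$ are $k$-cohomology equivalences, compatibly under $BG$. Since $Y$ is simply connected, the Eilenberg--Moore spectral sequence converges strongly. By naturality, $H^*(\mathcal R_X;k)$ is then the cohomology of the homotopy fiber $F$ of the composite
\[
L\hookrightarrow S^2\vee L.
\]
Both spaces are simply connected, so this replacement of $BG$ by $L$ is legitimate. I will state this step as a comparison of Eilenberg--Moore models, i.e.\ derived tensor products of cochain algebras, which are invariant under quasi-isomorphism of the input maps.

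Second, compute $F$ directly. The fiber of a wedge-summand inclusion $L\to S^2\vee L$ is the Ganea-type fiber. Its homotopy type is $\Sigma\Omega S^2\wedge(\Omega L)_+$, i.e.\ $\bigvee_{m\ge1}S^{m}\wedge(\Omega L)_+$, because $\Sigma\Omega S^2\simeq\bigvee_{m\ge1}S^{m+1}$. Alternatively, it is $\Omega(S^2\vee L)$ divided by the action of $\Omega L$. Additively this gives
\[
\widetilde H^*(F;k)\cong\bigoplus_{m\ge1}\bigl(s\,H^*(K;k)\bigr)^{\otimes m}\text{-sized pieces},
\]
and this must be matched with the series $\rho=1/(1-t\kappa)$ of Lemma~\ref{hom:pid-words}. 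That series is exactly the Poincaré series of $\operatorname{Sh}_k(sH^*(K;k))$, so the additive isomorphism already holds.

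Third, determine the ring structure; this is the step I expect to be the main obstacle. I would exhibit $F$ as the total space of the fibration $\Omega(S^2\vee L)\to F\to L$. Via Lemma~\ref{hom:loop-wedge}, identify
\[
H_*(F;k)\cong H_*(\Omega(S^2\vee L);k)\otimes_{H_*(K;k)}k.
\]
This is the cotensor of the free product $T_k(u)*H_*(K;k)$ over $H_*(K)$, which is additively $k\oplus\bigoplus_{m\ge1}u\,(H_*(K)\,u)^{\otimes}\cdots$. Equivalently, it is the tensor coalgebra on $V=s\,H_*(K;k)$, with the letters $u\cdot c$ for $c\in H_*(K)$.

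The key claim is that the diagonal of $F$ induces the deconcatenation coproduct on words. This should follow because $F$ is the bar-construction-like space $B(*,\Omega L,\Omega(S^2\vee L))$, and the coproduct on $T_k(u)*H_*(K)$ is multiplicative with $u$ primitive. Dualizing gives the shuffle product.

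Noncanonicity enters through the choice of splitting and of the letters, since the coproduct on $H_*(K)$ mixes words. To handle this, I would filter by word length, show the associated graded is the tensor coalgebra, and then use that a connected graded coalgebra whose primitives-with-filtration are cofree is cofree. The dual statement is that a commutative algebra whose associated graded is a shuffle algebra on $V$ is isomorphic to $\operatorname{Sh}_k(V)$, because shuffle algebras are free divided-power-like objects. In characteristic $p$ this lifting is delicate, and it is where I would spend the effort. My first attempt would be to construct an explicit coalgebra map $H_*(F)\to T^c(V)$ using the projection onto length-one words and the universal property of the cofree conilpotent coalgebra. Then I would check bijectivity on the associated graded.
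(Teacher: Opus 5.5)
\textbf{There is a genuine gap: the homotopy type of $F$ is misidentified, and the key claim about its coproduct is false.}

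Your first step is sound and is essentially the paper's. The marked comparisons give a map $\mathcal R_X\to F=\hofib(L\to S^2\vee L)$ that is a $k$-homology equivalence, so $H_R^*(X;k)\cong H^*(F;k)$ as rings. The paper gets this from a Serre spectral sequence comparison of the two fibration sequences, where you use Eilenberg--Moore. Your additive count via $\rho=1/(1-t\kappa)$ is also correct.

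The gap is the identification of $F$, on which the whole ring structure depends. Your Step~2 claim $F\simeq\Sigma\Omega S^2\wedge(\Omega L)_+$ is false. For $L\simeq *$ it gives $\Sigma\Omega S^2$, whereas in fact $F\simeq\Omega S^2$. More to the point, a suspension has trivial cup products, which is incompatible with the shuffle algebra you want to prove. The correct identification uses the pinch map $r\colon S^2\vee L\to L$, which is a retraction of the inclusion $i$. Since $r_*i_*=\id$ on homotopy groups, the composite $\Omega\hofib(r)\to\Omega(S^2\vee L)\to F$ is an equivalence. Moreover $\hofib(r)\simeq(S^2\times K)/(\{*\}\times K)=\Sigma^2K_+$. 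Hence $F\simeq\Omega\Sigma V$ with $V=\Sigma K_+$, and $V$ is itself a suspension.

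Without this, Step~3 fails at its key claim.
\begin{itemize}
\item The coproduct on $H_*(F;k)$ induced by the diagonal is graded cocommutative, while deconcatenation is not once $\dim V\ge2$.
\item The dual of deconcatenation is the concatenation product of a tensor algebra, not the shuffle product.
\item For the same reason, the coalgebra map into the cofree conilpotent coalgebra $T^c(V)$ that you propose to build from the length-one projection cannot be an isomorphism.
\item In your model $H_*(\Omega(S^2\vee L);k)\otimes_{H_*(K;k)}k$, the coproduct of words of length at least two picks up terms from both the coproduct and the product of $H_*(K;k)$. This leaves a filtration-and-lift problem, which in characteristic $p$ is a genuine multiplicative extension problem: an associated-graded argument controls products only modulo the filtration, while $\operatorname{Sh}_k(V)$ has divided-power relations such as vanishing $p$th powers of even-degree letters. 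You leave this unresolved.
\end{itemize}

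Once you have $F\simeq\Omega\Sigma V$, the problem disappears.
\begin{itemize}
\item Bott--Samelson gives $H_*(F;k)\cong T_k(\widetilde H_*(V;k))=T_k(sH_*(K;k))$.
\item Because $V$ is a suspension, its reduced diagonal is nullhomotopic. So the letters, being images of $\widetilde H_*(V;k)$ under the coalgebra map induced by $V\to\Omega\Sigma V$, are primitive, and the coproduct is the signed unshuffle coproduct.
\item The finite type from Lemma~\ref{hom:pid-words} lets you dualize degreewise to $\operatorname{Sh}_k(sH^*(K;k))$.
\end{itemize}
The only noncanonical choice is the equivalence $\hofib(r)\simeq\Sigma^2K_+$.
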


\begin{proof}
Put $W=S^2\vee L$, let $i:L\to W$ be the inclusion, and set $F=\hofib(i)$.
The marked comparisons in Theorems~\ref{int:main} and~\ref{loc:main} give a map of homotopy fibrations
\[
\begin{array}{ccccc}
 \Omega Y&\longrightarrow&\mathcal R_X&\longrightarrow&BG\\
 {\scriptstyle\Omega\Phi}\downarrow&&{\scriptstyle g}\downarrow&&{\scriptstyle\ell}\downarrow\\
 \Omega W&\longrightarrow&F&\longrightarrow&L.
\end{array}
\]
By Lemma~\ref{loc:L-properties} and localization of loop spaces, $\ell$ and $\Omega\Phi$ are $k$-homology equivalences.
Both rows are pullbacks of path fibrations over the simply connected spaces $Y$ and $W$, so their homology local systems are trivial.
The Serre spectral sequence comparison therefore shows that $g$ is a $k$-homology equivalence, giving an isomorphism $g^*:H^*(F;k)\to H_R^*(X;k)$ of cup-product algebras.

Let $r:W\to L$ be the pinch map and put $E=\hofib(r)$.
The standard description of the fiber of a wedge retraction \cite[Example~3.6 and Remark~3.7]{Theriault2024} gives
\[
 E\simeq (S^2\times K)/(\{*\}\times K)
       =S^2\wedge K_+=\Sigma^2K_+,
\]
where $K_+$ denotes $K$ with a disjoint basepoint.
Since $ri=\id_L$, the homotopy exact sequences and the splitting $r_*i_*=\id$ show that the natural map $\Omega E\to\Omega W\to F$ induces isomorphisms on all homotopy groups.
Both spaces are connected and have CW type, so $F\simeq\Omega\Sigma^2K_+$.

Set $V=\Sigma K_+$, which is connected.
The Bott--Samelson theorem \cite{BottSamelson1953} gives
\[
 H_*(\Omega\Sigma V;k)\cong
 T_k\bigl(\widetilde H_*(V;k)\bigr)
 =T_k\bigl(sH_*(K;k)\bigr).
\]
Because $V$ is a suspension, its reduced diagonal is nullhomotopic; hence the letters are primitive and the coproduct is the signed unshuffle coproduct.
By Lemma~\ref{hom:pid-words}, the letter space is finite dimensional in each degree and concentrated in positive degrees.
Thus each total degree has only finitely many word lengths, and taking degreewise duals gives $\operatorname{Sh}_k(sH^*(K;k))$, with no completion required.
Together with $g^*$, this proves the assertion.
\end{proof}

\section{Cyclic Alexander quandles and cyclic Sylow subgroups}\label{sec:cyclic}

We prove Theorem~\ref{intro:cyclic} and then extend the local computation to finite connected quandles $X$ at good primes $p$ for which a Sylow $p$-subgroup of $\Inn(X)$ is nontrivial and cyclic.
This gives the integral quandle homology of regular cyclic Alexander quandles.

We first record the augmentation data used in the Alexander examples.
Let $A$ be a finite abelian group and let $T\in\operatorname{Aut}(A)$ be such that $1-T$ is invertible.
Equip $X=A$ with the Alexander operation $x*y=Tx+(1-T)y$.
Since $(1-T)A=A$, the right translations generate all translations of $A$ together with $T$.
Thus $X$ is connected, and its inner automorphism group and the stabilizer of $0$ are
\begin{equation}\label{low:affine-data}
 G\cong A\rtimes\langle T\rangle,\qquad H\cong\langle T\rangle=C_d,
 \qquad d=\operatorname{ord}(T).
\end{equation}
Our right-action multiplication is opposite to the usual left affine multiplication; inversion identifies the two groups and sends $T$ to $T^{-1}$.
The quandle is called \emph{regular} if $\gcd(|A|,d)=1$.
Since $|H|=d$, Proposition~\ref{loc:completion} applies at every prime $p\nmid d$, in particular at every prime dividing $|A|$ in the regular case.

For the cyclic case, fix an odd prime $p$, an integer $r\ge1$, and a unit $\omega\in(\Z/p^r)^\times$ of order $d>1$ dividing $p-1$.
Put
\begin{equation}\label{cyc:definition}
 X_{p^r,\omega}=\Z/p^r,\qquad x*y=\omega x+(1-\omega)y.
\end{equation}
Since the kernel of reduction modulo $p$ is a $p$-group, the reduction of $\omega$ also has order $d$, and $1-\omega$ is a unit.
Thus $X_{p^r,\omega}$ is connected and regular.
By \eqref{low:affine-data}, its inner automorphism group and the stabilizer of $0$ are
\begin{equation}\label{cyc:groups}
 G=C_{p^r}\rtimes C_d,\qquad H=C_d
\end{equation}
and the augmentation element is $h=S_0$.
Replacing the generator of $C_d$ by its inverse interchanges $\omega$ and $\omega^{-1}$ in the semidirect-product presentation.
The pair $(G,H)$ is preserved, but the augmentation element $h$ must be tracked separately.
Put $L=\Cofib(BH\to BG)$.

In the case $r=1$, the author computed the integral quandle homology for all $d=\operatorname{ord}(\omega)>1$, including composite $d$, and gave cocycles representing generators of the cohomology with coefficients in $\F_p$ \cite[Theorem~1.1, Corollary~5.9]{Nosaka2013}.

\subsection{Cohomology and higher Bocksteins}

The following calculation supplies the input for the loop-space identification in Theorem~\ref{cyc:loop}.
We use the Bockstein spectral sequence associated with the $p$-adic filtration on integral cochains, with the ordinary mod-$p$ Bockstein denoted by $\beta_1$.
A statement about $\beta_r$ includes the assertion that the specified classes survive to the $r$th page.
We write $\Lambda(-)$ for the exterior algebra over $\F_p$; subscripts on algebra generators indicate their degrees.

\begin{lemma}\label{cyc:invariants}
Generators $z$ and $w$ may be chosen so that
\begin{equation}\label{cyc:group-cohomology}
 H^*(G;\F_p)=\Lambda(z_{2d-1})\otimes\F_p[w_{2d}],
 \qquad \beta_r(z)=w.
\end{equation}
All Bockstein differentials before $\beta_r$ vanish.
\end{lemma}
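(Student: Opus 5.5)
The plan is to compute $H^*(G;\F_p)$ for $G=C_{p^r}\rtimes C_d$ through the Lyndon--Hochschild--Serre spectral sequence of $1\to N\to G\to C_d\to1$, where $N=C_{p^r}$. Since $p\nmid d$, the quotient $C_d$ has trivial higher cohomology with coefficients in $\F_p$-modules. The spectral sequence therefore collapses onto the edge, and
\[
 H^*(G;\F_p)\cong H^*(C_{p^r};\F_p)^{C_d}.
\]
The same transfer argument applies to integral and $\Z/p^s$ coefficients. Consequently, the whole Bockstein spectral sequence of $G$ is the $C_d$-invariant part of the Bockstein spectral sequence of $C_{p^r}$: restriction is injective and commutes with every $\beta_s$, and the corestriction–restriction composite is multiplication by $d$, which is a unit.

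Next I would write $H^*(C_{p^r};\F_p)=\Lambda(x_1)\otimes\F_p[y_2]$, with $y$ the reduction of an integral generator of $H^2(C_{p^r};\Z)\cong\Z/p^r$. For $p$ odd, the generator of $C_d$ acts on $N$ by multiplication by $\omega$ (or by $\omega^{-1}$; the choice is irrelevant for the conclusion). It therefore acts on $x$ by the scalar $\bar\omega^{\pm1}$ and on $y$ by the same scalar, because $H^1$ and $H^2(-;\Z)$ are both naturally $\operatorname{Hom}(N,-)$-type functors. Here $\bar\omega\in\F_p^\times$ is the reduction of $\omega$, which has exact order $d$. The monomial $x^\epsilon y^m$ is then acted on by $\bar\omega^{\pm(\epsilon+m)}$, so it is invariant if and only if $d\mid \epsilon+m$. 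The invariant subalgebra is spanned by $y^{dk}$ and $xy^{dk-1}$ for $k\ge1$, together with $1$. I would set $w=y^d$ in degree $2d$ and $z=xy^{d-1}$ in degree $2d-1$. Then $z^2=0$ because $p$ is odd, and the invariant ring is $\Lambda(z)\otimes\F_p[w]$, which gives \eqref{cyc:group-cohomology}.

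For the Bocksteins, recall that in $C_{p^r}$ one has $\beta_s=0$ for $s<r$ and $\beta_r(x)=y$ (up to a unit), with $y$ a permanent cycle. The page-$s$ terms for $s\le r$ equal the full $E_1$ page. Since the differentials are derivations, $\beta_r(xy^{d-1})=y^d$, up to the unit factor, which can be absorbed by rescaling $z$. Passing to invariants, which is exact, I conclude that $\beta_s(z)=0$ for $s<r$ and $\beta_r(z)=w$, and that all earlier Bockstein differentials vanish on the invariant ring.

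The main obstacle is justifying that the Bockstein spectral sequence of $G$ really is the invariant part of that of $N$ at each page. I would handle this by working with the integral cochain complex together with its $p$-adic filtration and applying the idempotent $\frac1d\sum_{g\in C_d}g$, which acts on $C^*(BN;\Z_{(p)})$ by chain maps. Its image computes $H^*(G;\Z_{(p)})$ with all its reductions, and the idempotent splits the exact couple as a direct summand. This is cleaner than arguing with the LHS spectral sequence page by page. The step that needs care is checking that the action scalar on $y$ matches the one on $x$, which follows from the naturality of the Bockstein $\beta_r\colon H^1\to H^2$.
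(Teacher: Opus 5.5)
Your proposal is correct and follows essentially the same route as the paper. Both arguments identify $H^*(G;\F_p)$ with the $C_d$-invariants of $H^*(C_{p^r};\F_p)$ via restriction and averaging, note that the generator acts on the degree-one and degree-two classes by the same scalar of order $d$, take $z=xy^{d-1}$ and $w=y^d$, and obtain $\beta_r(z)=w$ from the derivation rule. Your idempotent on integral cochains is a more explicit version of the paper's remark that averaging makes passage to invariants exact on every page of the Bockstein spectral sequence.
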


\begin{proof}
For $P=C_{p^r}$, the periodic resolution gives $H^*(P;\F_p)=\Lambda(a_1)\otimes\F_p[b_2]$.
The integral cochain differential alternates between $0$ and $p^r$, so all differentials before the $r$th page vanish, and generators may be chosen such that $\beta_r(a)=b$ and $\beta_r(b)=0$.
Since $p\nmid d$, restriction and averaging give
\[
 H^*(G;\F_p)\cong H^*(P;\F_p)^{C_d}.
\]
By naturality of $H^1(P;\F_p)=\operatorname{Hom}(P,\F_p)$ and $\beta_r$, a generator of $C_d$ acts on $a,b$ by the same scalar of order $d$.
The invariant monomials are $b^{jd}$ for $j\ge0$ and $ab^{jd-1}$ for $j\ge1$, so we may take $z=ab^{d-1}$ and $w=b^d$.
Since $p$ is odd, $z^2=0$.
Averaging makes passage to invariants exact on every page, and the derivation rule gives $\beta_r(z)=w$.
Replacing the generator of the quotient group by its inverse does not change the invariants.
\end{proof}

\subsection{The loop space of the homotopy cofiber}

For odd $m>1$ and an integer $N\ge1$, let $S^m\{N\}$ be the homotopy fiber of a based degree-$N$ map $[N]:S^m\to S^m$.
Its localization satisfies
\begin{equation}\label{cyc:sphere-fibre-definition}
 S^m\{N\}_{(p)}\simeq\hofib\bigl([N]_{(p)}:S^m_{(p)}\to S^m_{(p)}\bigr).
\end{equation}
For an odd prime $p$ and $d>1$ dividing $p-1$, the space $S^{2d-1}\{p^r\}$ is a loop space.
Moreover, if a $p$-complete space $Z$ has the mod-$p$ cohomology algebra and higher Bocksteins of Lemma~\ref{cyc:invariants}, then $\Omega Z\simeq S^{2d-1}\{p^r\}$ \cite[Corollary~1.2 and Lemma~2.1]{BrotoLevi}.
This characterization requires the classes to survive to the $r$th page, but does not require $d$ to be prime.

\begin{theorem}\label{cyc:loop}
For $X=X_{p^r,\omega}$ in \eqref{cyc:definition}, there is an ordinary homotopy equivalence
\begin{equation}\label{cyc:loop-equivalence}
 \Omega L_{(p)}\simeq S^{2d-1}\{p^r\}_{(p)}
\end{equation}
and $\cM_{(p)}$ admits the $A_\infty$ model
\begin{equation}\label{cyc:monoid-model}
 \cM_{(p)}\simeq_{A_\infty}
 \Omega\bigl((S^2\vee B(C_{p^r}\rtimes C_d)^\wedge_p)_{(p)}\bigr).
\end{equation}
Moreover,
\begin{equation}\label{cyc:loop-low-homotopy}
 \pi_j(\Omega L_{(p)})=0\quad(1\le j<2d-2),\qquad
 \pi_{2d-2}(\Omega L_{(p)})\cong C_{p^r}.
\end{equation}
\end{theorem}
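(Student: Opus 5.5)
The plan is to identify $L_{(p)}$ with $BG^\wedge_p$ and then apply the Broto--Levi characterization quoted just before the statement. Since $H=C_d$ and $p\nmid d$, Proposition~\ref{loc:completion} gives $L_{(p)}\simeq BG^\wedge_p$ for $G=C_{p^r}\rtimes C_d$. Substituting this into \eqref{loc:completed-group-formula} immediately yields the $A_\infty$ model \eqref{cyc:monoid-model}. This part is purely formal.

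For \eqref{cyc:loop-equivalence}, set $Z=BG^\wedge_p$, which is $p$-complete. Bousfield--Kan completion is a mod-$p$ cohomology equivalence, so $H^*(Z;\F_p)\cong H^*(G;\F_p)$ as algebras. Lemma~\ref{cyc:invariants} then gives $H^*(Z;\F_p)=\Lambda(z_{2d-1})\otimes\F_p[w_{2d}]$, with all Bocksteins before $\beta_r$ vanishing and $\beta_r z=w$.

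\textbf{Main obstacle.} The Bockstein spectral sequence of $Z$ must agree with that of $BG$. I would argue this through $L$: by Lemma~\ref{loc:L-properties}, $L$ has finite-type $p$-local homology in the sense needed, $\ell$ is a $\Z_{(p)}$-homology equivalence, and $L\to L_{(p)}$ is one as well. The Bockstein spectral sequence depends only on $\Z_{(p)}$-cochains up to quasi-isomorphism, so it is transported along $BG\to L\to L_{(p)}\simeq Z$ by naturality. Alternatively, one can use that completion of a space with finite-type $\F_p$-homology preserves $\Z_p$-cohomology. Either route lets us invoke \cite[Corollary~1.2 and Lemma~2.1]{BrotoLevi} to get $\Omega Z\simeq S^{2d-1}\{p^r\}$. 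The left side is $\Omega L_{(p)}$. The right side is already $p$-local after localization, since $S^{2d-1}\{p^r\}$ has $p$-primary homotopy in positive degrees, and the $p$-complete and $p$-local versions agree for these finite-homotopy spaces. I would also check that Broto--Levi's conventions, whether they state the result with $S^{2d-1}\{p^r\}$ or its completion, match \eqref{cyc:sphere-fibre-definition}.

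Finally, \eqref{cyc:loop-low-homotopy} follows from the long exact sequence of $S^{2d-1}\{p^r\}\to S^{2d-1}\xrightarrow{[p^r]}S^{2d-1}$ after localization at $p$. We have $\pi_j(S^{2d-1})=0$ for $j<2d-1$ and $\pi_{2d-1}=\Z$, with $[p^r]$ acting as multiplication by $p^r$. This gives $\pi_j=0$ for $1\le j<2d-2$ and $\pi_{2d-2}\cong\Z/p^r=C_{p^r}$. As a cross-check, Hurewicz applied to $L_{(p)}$ should give $H_{2d-1}(L;\Z_{(p)})\cong H_{2d-1}(G;\Z_{(p)})\cong C_{p^r}$, which is consistent with $\beta_r z=w$.
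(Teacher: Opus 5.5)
Your proposal is correct and follows the paper's proof essentially step for step. In both, Proposition~\ref{loc:completion} identifies $L_{(p)}\simeq BG^\wedge_p$ under $BG$, and naturality transports the mod-$p$ cohomology and the Bockstein spectral sequence along the mod-$p$ equivalence $BG\to L_{(p)}$. Lemma~\ref{cyc:invariants} and the Broto--Levi characterization then identify the loop space, and \eqref{cyc:loop-low-homotopy} is read off from the homotopy exact sequence of the degree-$p^r$ map. The only minor difference is that you justify $S^{2d-1}\{p^r\}\simeq S^{2d-1}\{p^r\}_{(p)}\simeq S^{2d-1}\{p^r\}^\wedge_p$ directly, using that its homotopy groups are finite $p$-groups, whereas the paper cites Broto--Levi for this.
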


\begin{proof}
By Proposition~\ref{loc:completion}, $L_{(p)}\simeq(BG)^\wedge_p$ is simply connected and $p$-complete.
By Lemma~\ref{loc:L-properties} and localization,
$BG\to L_{(p)}$ induces an isomorphism in mod-$p$
cohomology. Naturality identifies the corresponding
Bockstein spectral sequences.
Thus Lemma~\ref{cyc:invariants} and the Broto--Levi
characterization identify the loop space.
The fiber of a degree-$p^r$ map on an odd-dimensional sphere is equivalent to both its localization and its completion \cite[Section~1.1]{BrotoLevi}, giving \eqref{cyc:loop-equivalence}.
The homotopy exact sequence of \eqref{cyc:sphere-fibre-definition} gives \eqref{cyc:loop-low-homotopy}.
Proposition~\ref{loc:completion} gives \eqref{cyc:monoid-model}.
\end{proof}

\subsection{Integral input for rack and quandle homology}

Henceforth, put $R=\Z_{(p)}$ and denote the $R$-module $R/p^rR$ by $C_{p^r}$.

\begin{lemma}\label{cyc:integral-fibre}\label{cyc:fibre-series}
Let $m=2d-1$ and $F=S^m\{p^r\}_{(p)}$.
Then
\begin{equation}\label{cyc:integral-fibre-groups}
 H_j(F;R)\cong
 \begin{cases}
 R,&j=0,\\
 C_{p^r},&j=k(2d-2)\text{ for some }k\ge1,\\
 0,&\text{otherwise}.
 \end{cases}
\end{equation}
In particular, these are also the groups $H_j(\Omega L;R)$ for $X=X_{p^r,\omega}$.
With coefficients in $\F_p$,
\begin{equation}\label{cyc:A-series}
 A_X(t)=\frac{1+t^{2d-1}}{1-t^{2d-2}}.
\end{equation}
\end{lemma}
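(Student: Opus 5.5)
The plan is to compute $H_*(F;R)$ with the Serre spectral sequence of the fibration
\[
 \Omega S^m_{(p)}\longrightarrow F\longrightarrow S^m_{(p)},
\]
obtained by looping the defining sequence $F\to S^m_{(p)}\xrightarrow{[p^r]}S^m_{(p)}$. The base $S^m_{(p)}$ is simply connected, since $m=2d-1\ge3$, so the local system is trivial. By Bott--Samelson, $H_*(\Omega S^m;R)=T_R(x)$ with $|x|=m-1=2d-2$, a free $R$-module with one generator in each degree $k(2d-2)$. The $E^2$ page is therefore $R$-free and concentrated in the columns $0$ and $m$. Hence the only differential is $d_m:E^m_{m,q}\to E^m_{0,q+m-1}$, and it gives Wang-type exact sequences
\[
 0\to H_{j}\to H_{j-m}(\Omega S^m;R)\xrightarrow{\ \theta\ }H_{j-1}(\Omega S^m;R)\to H_{j-1}(F;R)\to\cdots,
\]
where I have written $E^2_{0,*}$ and $E^2_{m,*}$ in terms of loop homology.

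The key step, and the main obstacle, is to identify $\theta$. I claim that $\theta$ sends $x^{k}$ to a unit multiple of $p^r x^{k+1}$, so that it is multiplication by $p^r$ (up to a unit) on each rank-one free module in the relevant degree. In the bottom degree this follows from the homotopy exact sequence: $\pi_{m-1}(F)$ is the cokernel of $p^r$ on $\pi_m(S^m_{(p)})$, which is $C_{p^r}$, and this matches the statement \eqref{cyc:loop-low-homotopy} already proved. Hence $\theta(1)=p^r x$ up to a unit. For higher degrees I would use that the Wang differential is a derivation with respect to the $H_*(\Omega S^m)$-module structure coming from the principal action of $\Omega S^m_{(p)}$ on the fibration. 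This is available because $F\to S^m$ is the pullback of the path fibration along $[p^r]$. With this, $\theta(x^k)=\pm x^k\theta(1)=\pm p^r x^{k+1}$.

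A less structural alternative is a rank count. Rationally, $F_{\Q}$ is contractible because $[p^r]$ is a rational equivalence, so $\theta\otimes\Q$ is an isomorphism. Mod $p$, the homology $H_*(F;\F_p)$ is that of $S^{m-1}\{p^r\}$-type: it has one class in each degree $k(2d-2)$ and one in each degree $k(2d-2)+1$. This can be seen from the same spectral sequence, where $\theta\otimes\F_p=0$ because $[p^r]$ induces zero on mod-$p$ homology of the base. Together these force each map $R\to R$ to be multiplication by $p^r$ times a unit, provided I check that the cokernel order is $p^r$ in every degree. I would settle this by the module-derivation argument, but the mod-$p$ count is at least consistent with it. Either way, $\theta$ is injective with cokernel $C_{p^r}$ in each degree $k(2d-2)$ for $k\ge1$, and this yields \eqref{cyc:integral-fibre-groups}.

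The remaining claims follow directly. The identification with $H_j(\Omega L;R)$ comes from Theorem~\ref{cyc:loop}, because localization does not change $R$-homology. For \eqref{cyc:A-series}, apply the universal coefficient theorem. Each $C_{p^r}$ in degree $k(2d-2)$ contributes $\F_p$ in degrees $k(2d-2)$ and $k(2d-2)+1$, and $R$ contributes $1$ in degree $0$. The resulting series is
\[
 1+(1+t)\frac{t^{2d-2}}{1-t^{2d-2}}=\frac{1+t^{2d-1}}{1-t^{2d-2}}.
\]
Equivalently, this is Corollary~\ref{hom:exponent-profiles}'s formula $A_X=1+(1+t)K_{\ge1}$.
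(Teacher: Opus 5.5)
Your proof is correct but follows a different route from the paper. The paper does not compute anything here. It quotes the integral homology of $S^m\{p^r\}$ from Broto--Levi \cite[Section~1.1]{BrotoLevi} and localizes at $p$. Then, as you do, it transfers the result to $\Omega L$ via Theorem~\ref{cyc:loop} and reads off $A_X$ from the universal coefficient theorem.

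You instead derive \eqref{cyc:integral-fibre-groups} from the Wang sequence of the principal fibration $\Omega S^m_{(p)}\to F\to S^m_{(p)}$. You identify $\theta(1)$ as a unit times $p^r x$ via Hurewicz and $\pi_{m-1}(F)\cong C_{p^r}$, then propagate it to all degrees using the principal action. This makes the lemma self-contained, works verbatim for any degree $N$, and is the same kind of argument the paper itself uses for $K_{\mathrm{tet}}$ in Lemma~\ref{tet:integral-K}.

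Two small corrections:
\begin{itemize}
\item The property you actually use, $\theta(x^k)=\pm x^k\theta(1)$, is linearity over $H_*(\Omega S^m;R)=R[x]$, not a derivation property. A derivation would force $\theta(1)=0$.
\item Your displayed sequence should read $\cdots\to H_j(\Omega S^m;R)\to H_j(F;R)\to H_{j-m}(\Omega S^m;R)\xrightarrow{\theta}H_{j-1}(\Omega S^m;R)\to H_{j-1}(F;R)\to\cdots$.
\end{itemize}

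There is also a quicker way to identify $\theta$ in all degrees at once, without the module structure; it is the naturality comparison used in Lemma~\ref{tet:integral-K}. The canonical map from $F\to S^m_{(p)}$ to the path fibration over $S^m_{(p)}$ covers $[p^r]$ and is the identity on fibers. The path fibration's $d_m$ is an isomorphism because its total space is contractible. Hence $\theta$ is $p^r$ times an isomorphism.

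Your rank-count alternative cannot replace this step, as you already suspect. Mod-$p$ dimensions do not see the exponent $r$; compare Proposition~\ref{cyc:dimension-versus-type}.
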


\begin{proof}
Formula~\eqref{cyc:integral-fibre-groups} is the $p$-localization of the integral homology computation for the homotopy fiber of a degree map on a sphere \cite[Section~1.1]{BrotoLevi}.
By Theorem~\ref{cyc:loop} and invariance of homology with coefficients in $R$ under localization, the same formula computes $H_j(\Omega L;R)$.
The universal coefficient theorem gives $A_X(t)=1+(1+t)t^{2d-2}/(1-t^{2d-2})$, which is \eqref{cyc:A-series}.
\end{proof}

For any field $k$ of characteristic $p$, Benson and Greenlees determined the Pontryagin algebra $H_*(\Omega BG_p^\wedge;k)$ together with its minimal $A_\infty$-algebra structure; its Poincar\'e series agrees with \eqref{cyc:A-series} \cite[Theorem~1.3 and Section~5]{BensonGreenlees2021}.

\begin{theorem}\label{cyc:integral-homology}
For $X=X_{p^r,\omega}$ with $d>1$ dividing $p-1$,
\begin{equation}\label{cyc:integral-quandle-groups}
 H_1^Q(X;\Z)\cong\Z,\qquad
 H_n^Q(X;\Z)\cong(C_{p^r})^{b_n}\quad(n\ge2),
\end{equation}
where the multiplicities are determined by
\begin{equation}\label{cyc:integral-multiplicity-series}
 \sum_{n\ge2}b_nt^n
   =\frac{t^{2d-1}(1+t)}
          {1-t^{2d-2}-t^{2d-1}-t^{2d}}.
\end{equation}
\end{theorem}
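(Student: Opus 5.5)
The plan is to first pass from integral to $p$-local homology and then read off the multiplicities from Corollary~\ref{hom:exponent-profiles}. Since $|X|=p^r$, the recovery paragraph after Corollary~\ref{hom:exponent-profiles} applies. Multiplication by $p^r$ annihilates $H_n^Q(X;\Z)$ for $n\ge2$, so these groups are finite $p$-groups and agree with $H_n^Q(X;\Z_{(p)})$. In degree one, connectedness gives $H_1^Q(X;\Z)\cong\Z$. The prime $p$ is good because $|H|=d$ divides $p-1$, so Lemma~\ref{hom:pid-words} and Corollary~\ref{hom:exponent-profiles} apply with $R=\Z_{(p)}$ and $K=\Omega(L_{(p)})$.

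Next I will compute the cumulative exponent series of $K$. Lemma~\ref{cyc:integral-fibre} gives $H_j(K;R)\cong C_{p^r}$ for $j=k(2d-2)$ with $k\ge1$, and zero in all other positive degrees. Every summand therefore has order exactly $p^r$. It follows that $K_{\ge e}(t)=t^{2d-2}/(1-t^{2d-2})$ for $1\le e\le r$, and $K_{\ge e}=0$ for $e>r$. Substituting into \eqref{hom:exponent-profile-series} gives $Q_{\ge e}=0$ for $e>r$. So every cyclic summand of $H_n^Q(X;R)$ with $n\ge2$ has order dividing $p^r$.

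For $e\le r$ I will simplify
\[
 Q_{\ge e}(t)=\frac{t(1+t)\,t^{2d-2}/(1-t^{2d-2})}{1-t(1+t)\,t^{2d-2}/(1-t^{2d-2})}
 =\frac{t^{2d-1}(1+t)}{1-t^{2d-2}-t^{2d-1}-t^{2d}}.
\]
This series does not depend on $e$ in the range $1\le e\le r$. Hence $Q_{\ge e}-Q_{\ge e+1}=0$ for $e<r$, and $Q_{\ge r}-Q_{\ge r+1}=Q_{\ge r}$. Every summand is therefore exactly $C_{p^r}$, with multiplicity $b_n$ given by \eqref{cyc:integral-multiplicity-series}.

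I do not expect a real obstacle, because Lemma~\ref{cyc:integral-fibre} and Corollary~\ref{hom:exponent-profiles} do all the heavy lifting. The step I would check most carefully is the integral-recovery step. It needs the annihilation result of \cite[Corollary~6.2]{Nosaka2013} to apply to the Alexander quandle $\Z/p^r$, and it needs $H_n^Q(X;\Z)$ to be finitely generated, which holds because $X$ is finite. Granting these, localization at the single prime $p$ determines the integral groups, as the recovery paragraph states.
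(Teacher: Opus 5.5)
Your proposal is correct and follows the paper's proof essentially step for step. Lemma~\ref{cyc:integral-fibre} gives $K_{\ge1}=\cdots=K_{\ge r}=t^{2d-2}/(1-t^{2d-2})$ and $K_{\ge r+1}=0$, Corollary~\ref{hom:exponent-profiles} turns these into the stated series with only $C_{p^r}$ summands, and the annihilation-based recovery of integral homology for an Alexander quandle of order $p^r$ passes from $\Z_{(p)}$ to $\Z$.
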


\begin{proof}
In Corollary~\ref{hom:exponent-profiles}, Lemma~\ref{cyc:integral-fibre} gives
\[
 K_{\ge 1}(t)=\cdots=K_{\ge r}(t)=\frac{t^{2d-2}}{1-t^{2d-2}},
 \qquad K_{\ge r+1}(t)=0.
\]
Thus $Q_{\ge1}=\cdots=Q_{\ge r}$ equals the right-hand side of \eqref{cyc:integral-multiplicity-series}, and $Q_{\ge r+1}=0$.
Consequently, the only torsion summands are $C_{p^r}$, and the free part is $H_1^Q(X;R)=R$.
Since $X$ is a finite connected Alexander quandle of order $p^r$, \eqref{cyc:integral-quandle-groups} follows from the \hyperref[hom:alexander-integral-recovery]{recovery of integral homology}.
\end{proof}

\begin{corollary}\label{cyc:series}
For $X=X_{p^r,\omega}$, the mod-$p$ Poincar\'e series of the covering monoid and the rack space are
\begin{align}
 P_\cM(t)
  &=\frac{1+t^{2d-1}}{1-t-t^{2d-2}-t^{2d}},\label{cyc:M-series}\\
 R_X(t)
  &=\frac{1-t^{2d-2}}{1-t-t^{2d-2}-t^{2d}}.\label{cyc:R-series}
\end{align}
The same series give the corresponding cohomology dimensions.
Let $b_n$ be the multiplicities in Theorem~\ref{cyc:integral-homology}, with $b_1=0$.
Then
\[
 \dim_{\F_p}H_n^Q(X;\F_p)=\dim_{\F_p}H_Q^n(X;\F_p)
 =\begin{cases}
 1,&n=1,\\
 b_n+b_{n-1},&n\ge2.
 \end{cases}
\]
\end{corollary}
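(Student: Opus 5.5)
The plan is to specialize the general series identities of Lemma~\ref{hom:pid-words}, with $\Lambda=\F_p$, to the input series $A_X(t)$ computed in Lemma~\ref{cyc:integral-fibre}. By Lemma~\ref{hom:pid-words} and the identifications $(\kappa,\mu,\rho,\chi)=(A_X,P_\cM,R_X,Q_{X,+})$, we have
\[
 P_\cM=\frac{A_X}{1-tA_X},\qquad R_X=\frac{1}{1-tA_X}.
\]

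Substitute $A_X=(1+t^{2d-1})/(1-t^{2d-2})$. Then
\[
 1-tA_X=\frac{1-t^{2d-2}-t-t^{2d}}{1-t^{2d-2}},
\]
and \eqref{cyc:M-series} and \eqref{cyc:R-series} follow by clearing the common denominator $1-t^{2d-2}$. The universal coefficient theorem over a field gives the same dimensions for cohomology, as was already noted after Lemma~\ref{hom:pid-words}.

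For the quandle dimensions, I would not recompute from $\chi=tA_X/(1+t-tA_X)$. Instead I apply the universal coefficient formula \eqref{hom:exponent-profile-uct}, namely $Q_{X,+}(t)=t+(1+t)Q_{\ge1}(t)$. The proof of Theorem~\ref{cyc:integral-homology} showed that $Q_{\ge1}=\sum_{n\ge2}b_nt^n$, since every torsion summand is $C_{p^r}$. Reading off the coefficient of $t^n$ gives $q_1=1$ (using $b_1=0$) and $q_n=b_n+b_{n-1}$ for $n\ge2$. Equivalently, each $C_{p^r}$ in $H_n^Q(X;\Z)$ contributes to both $H_n$ and $H_{n+1}$ with $\F_p$ coefficients, and the $\Z$ in degree one contributes only to $H_1$.

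As a consistency check I would verify the identity
\[
 \frac{tA_X}{1+t-tA_X}=t+(1+t)\frac{t^{2d-1}(1+t)}{1-t^{2d-2}-t^{2d-1}-t^{2d}}.
\]
Both sides have the form $\text{numerator}/(1-t^{2d-2}-t^{2d-1}-t^{2d})$ after multiplying through by $1-t^{2d-2}$, and a short polynomial comparison completes it. Cohomology dimensions again agree by universal coefficients, via the last clause of Corollary~\ref{hom:quandle-series}.

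No step is a real obstacle. The only place needing care is the rational-function algebra, and in particular confirming that the denominator $1+t-tA_X$ simplifies to $(1-t^{2d-2}-t^{2d-1}-t^{2d})/(1-t^{2d-2})$. This follows from $(1+t)(1-t^{2d-2})-t(1+t^{2d-1})=1-t^{2d-2}-t^{2d-1}-t^{2d}$.
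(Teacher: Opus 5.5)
Your proposal is correct and follows the paper's proof. You substitute \eqref{cyc:A-series} into Lemma~\ref{hom:pid-words} with $\Lambda=\F_p$ to get $P_\cM$ and $R_X$, and you obtain the quandle dimensions from Theorem~\ref{cyc:integral-homology} via the universal coefficient theorem; your use of \eqref{hom:exponent-profile-uct} is exactly that step. Your consistency check against $\chi=tA_X/(1+t-tA_X)$ is correct but not needed.
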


\begin{proof}
The two series follow by substituting \eqref{cyc:A-series} into Lemma~\ref{hom:pid-words} with $\Lambda=\F_p$.
The quandle dimensions follow from Theorem~\ref{cyc:integral-homology} and the universal coefficient theorem.
\end{proof}

\begin{example}[The dihedral family and parameter lifts]\label{cyc:dihedral}\label{cyc:parameter-lifts}\label{cyc:prime-parameters}
Let $p$ be an odd prime and $d>1$ any divisor of $p-1$, and choose an element $\bar\omega\in\F_p^\times$ of order $d$.
By Hensel's lemma, for every $r\ge1$ the simple root $\bar\omega$ of $u^d-1$ lifts uniquely to a root $\omega_r\in(\Z/p^r)^\times$, necessarily of order $d$.
Put $X_r=X_{p^r,\omega_r}$.
Theorem~\ref{cyc:integral-homology} gives
\[
 H_n^Q(X_r;\Z)=0\quad(2\le n\le2d-2),\qquad
 H_{2d-1}^Q(X_r;\Z)\cong H_{2d}^Q(X_r;\Z)\cong C_{p^r}.
\]
Thus the first degree at least two in which torsion occurs is $2d-1$.
By the universal coefficient theorem, both homology and cohomology with coefficients in $\F_p$ have dimension $1$ in degree $2d-1$ and dimension $2$ in degree $2d$.

The case $d=2$, namely $\omega_r=-1$, is the dihedral quandle; the dimension formula in Corollary~\ref{cyc:series} yields
\[
 Q_{X_r,+}(t)=\frac{t+t^4}{1-t^2-t^3-t^4}.
\]
The positive-degree dimensions begin $1,0,1,2,2,3,5,7,10,\ldots$.
\end{example}

For the prime-power family $X_{p^r,\omega}$, the field-coefficient dimensions do not distinguish the homotopy types.

\begin{proposition}\label{cyc:dimension-versus-type}
For fixed $p$ and $d$, the mod-$p$ homology dimensions in Corollary~\ref{cyc:series} are independent of $r$, whereas the ordinary homotopy types of $\Omega L_{(p)}$ and $\cM_{(p)}$ are each pairwise distinct as $r$ varies.
More precisely, put $m=2d-1$.
Then
\begin{equation}\label{cyc:M-distinguishing-homotopy}
 \pi_{m-1}(\cM_{(p)})\cong
 \pi_m(S^2)_{(p)}\oplus C_{p^r}.
\end{equation}
The graded algebras $H^*(C_{p^r}\rtimes C_d;\F_p)$ are isomorphic for all $r$, but their Bockstein spectral sequences distinguish $r$.
\end{proposition}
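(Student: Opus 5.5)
We prove the three assertions of Proposition~\ref{cyc:dimension-versus-type} separately, in the order stated.

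\emph{Independence of the dimensions.} This is immediate. The series \eqref{cyc:M-series}, \eqref{cyc:R-series} and the quandle dimensions in Corollary~\ref{cyc:series} are expressed through $d$ only. The same holds for \eqref{cyc:A-series}.

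\emph{Homotopy groups of $\cM_{(p)}$.} We compute $\pi_{m-1}(\cM_{(p)})=\pi_m\bigl((S^2\vee L)_{(p)}\bigr)$ using Theorem~\ref{loc:main}. The retractions of $S^2\vee L$ onto each wedge summand split off $\pi_m(S^2)_{(p)}\oplus\pi_m(L_{(p)})$. The complementary summand is $\pi_{m+1}$ of the pair $(S^2\times L,S^2\vee L)$. Since $S^2\times L/S^2\vee L=S^2\wedge L$, and $L_{(p)}$ is $(2d-2)$-connected by \eqref{cyc:loop-low-homotopy}, the smash $S^2\wedge L_{(p)}$ is $(2d)$-connected. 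The relative Hurewicz/Blakers--Massey theorem then shows that the pair is $(m+1)$-connected, because $m+1=2d$. Hence the cross term vanishes through degree $m$, and the wedge gives exactly $\pi_m(S^2)_{(p)}\oplus\pi_m(L_{(p)})$. By \eqref{cyc:loop-low-homotopy} we have $\pi_m(L_{(p)})=\pi_{2d-2}(\Omega L_{(p)})\cong C_{p^r}$, which yields \eqref{cyc:M-distinguishing-homotopy}. This connectivity estimate is the only step needing care. If the relative connectivity is off by one, I would fall back on the Hilton--Milnor theorem: the first cross term is $\pi_m(\Sigma\Omega S^2\wedge\Omega L)$, which vanishes in degree $m$ since that space is $(2d-1)$-connected.

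\emph{Distinguishing the homotopy types.} Since $\pi_m(S^2)_{(p)}$ is a finitely generated $\Z_{(p)}$-module independent of $r$, the isomorphism type of $\pi_m(S^2)_{(p)}\oplus C_{p^r}$ determines $r$. This follows from the structure theorem by cancellation of the fixed summand. Hence the $\cM_{(p)}$ are pairwise distinct. For $\Omega L_{(p)}$, the group $\pi_{2d-2}\cong C_{p^r}$ already distinguishes $r$.

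\emph{Cohomology algebras and Bocksteins.} By Lemma~\ref{cyc:invariants}, $H^*(C_{p^r}\rtimes C_d;\F_p)\cong\Lambda(z_{2d-1})\otimes\F_p[w_{2d}]$ for every $r$. In the Bockstein spectral sequence, $\beta_j$ vanishes for $j<r$ and $\beta_r(z)=w$. So the first nonzero Bockstein page is $r$, and this page is an invariant of the spectral sequence. This distinguishes $r$.
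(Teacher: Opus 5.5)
Your proposal is correct and follows essentially the paper's route: reduce via Theorem~\ref{loc:main} to $\pi_m(S^2_{(p)}\vee L_{(p)})$, use the $(m-1)$-connectivity of $L_{(p)}$ to make the pair $(S^2\times L_{(p)},S^2\vee L_{(p)})$ $(m+1)$-connected, read off $\pi_m(L_{(p)})\cong C_{p^r}$ from \eqref{cyc:loop-low-homotopy}, and cite Corollary~\ref{cyc:series} and Lemma~\ref{cyc:invariants} for the dimensions and the Bocksteins. The differences are cosmetic: you get the pair's connectivity from relative Hurewicz on the quotient $S^2\wedge L_{(p)}$ rather than from a CW model of $L_{(p)}$ with no cells below dimension $m$, and you recover $r$ by cancellation in the structure theorem rather than from the order of the torsion subgroup.
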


\begin{proof}
The assertions about the dimensions and Bocksteins follow from Corollary~\ref{cyc:series} and Lemma~\ref{cyc:invariants}.
By \eqref{cyc:loop-low-homotopy}, $L_{(p)}$ is $(m-1)$-connected with $\pi_m(L_{(p)})\cong C_{p^r}$.
Choose a CW model of $L_{(p)}$ with no positive-dimensional cells below dimension $m$.
The relative cells of $(S^2\times L_{(p)},S^2\vee L_{(p)})$ then have dimension at least $m+2$, so the wedge inclusion induces an isomorphism on $\pi_m$.
After $p$-localization, Theorem~\ref{loc:main} gives \eqref{cyc:M-distinguishing-homotopy}.
Since the torsion subgroup of $\pi_m(S^2)_{(p)}$ is finite and independent of $r$, the order of the torsion subgroup in \eqref{cyc:M-distinguishing-homotopy} determines $r$.
The ordinary homotopy type of $\Omega L_{(p)}$ also determines $r$ by \eqref{cyc:loop-low-homotopy}.
\end{proof}

\subsection{Reduction to cyclic Sylow subgroups}

The same computation extends to the case of cyclic Sylow subgroups.
The inertial index of the Sylow subgroup then plays the role of $d$.

\begin{theorem}\label{cyc:sylow-homology}
Let $p$ be a prime and $X$ a finite connected quandle.
Put $G=\Inn(X)$ and let $H$ be a point stabilizer.
Assume $p\nmid|H|$, that a Sylow $p$-subgroup $P$ of $G$ is cyclic of order $p^r$, and that $r\ge1$.
Put
\begin{equation}\label{cyc:sylow-inertial-index}
 e=|N_G(P):C_G(P)|.
\end{equation}
Then $p$ is odd, $e>1$, and $e\mid p-1$.
For $L=\Cofib(BH\to BG)$, there is an ordinary homotopy equivalence
\begin{equation}\label{cyc:sylow-fibre}
 \Omega L_{(p)}\simeq S^{2e-1}\{p^r\}_{(p)}.
\end{equation}
If $\omega\in(\Z/p^r)^\times$ has order $e$, there is also an $A_\infty$ equivalence of the specified monoids
\begin{equation}\label{cyc:sylow-monoid}
 (\cM_X)_{(p)}\simeq_{A_\infty}(\cM_{X_{p^r,\omega}})_{(p)}.
\end{equation}
The $p$-local quandle homology of $X$ is
\begin{equation}\label{cyc:sylow-integral-groups}
 H_1^Q(X;\Z_{(p)})\cong\Z_{(p)},\qquad
 H_n^Q(X;\Z_{(p)})\cong(C_{p^r})^{b_n(e)}\quad(n\ge2).
\end{equation}
Here $b_n(e)$ is the multiplicity in \eqref{cyc:integral-multiplicity-series} with $d=e$.
The mod-$p$ formulas of Corollary~\ref{cyc:series} also hold with $d=e$.
\end{theorem}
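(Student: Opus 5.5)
The plan is to reduce everything to the mod-$p$ cohomology of $G$ with its Bockstein spectral sequence. Then I will invoke the Broto--Levi characterization exactly as in Theorem~\ref{cyc:loop}, and afterwards reuse Lemma~\ref{cyc:integral-fibre}, Theorem~\ref{cyc:integral-homology} and Corollary~\ref{hom:exponent-profiles} verbatim with $d$ replaced by $e$.

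\textbf{Step 1: arithmetic of $e$.} Since $P$ is abelian, Swan's theorem (or Cartan--Eilenberg stable elements together with Burnside's fusion theorem) gives
\[
 H^*(G;\F_p)\cong H^*(P;\F_p)^{N_G(P)}=H^*(P;\F_p)^{W},\qquad W=N_G(P)/C_G(P).
\]
The group $W$ embeds in $\operatorname{Aut}(C_{p^r})$. It has order prime to $p$, because $P\le C_G(P)$ and $P$ is Sylow. Hence $W$ is cyclic of order $e$ with $e\mid p-1$ for odd $p$; for $p=2$ we get $e=1$.

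To show $e>1$, I argue as follows. If $e=1$, Burnside's transfer theorem gives a normal $p$-complement, so $G_{\mathrm{ab}}$ surjects onto $P\ne1$. But $G_{\mathrm{ab}}$ is cyclic generated by $[h]$ (Proposition~\ref{low:fundamental-group}), and $h\in H$ has order prime to $p$. So the image of $[h]$ in the $p$-group quotient is trivial, a contradiction. This also forces $p$ odd, since $p=2$ would give $e=1$.

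\textbf{Step 2: the loop space.} The invariant computation of Lemma~\ref{cyc:invariants} now goes through unchanged with $d$ replaced by $e$. A generator of $W$ acts on $a_1$ and on $b_2=\beta_r a$ by the same scalar of order $e$, so
\[
 H^*(G;\F_p)=\Lambda(z_{2e-1})\otimes\F_p[w_{2e}],\qquad \beta_r z=w,
\]
with the earlier Bocksteins vanishing. This uses that restriction to $P$ is injective and compatible with the Bockstein spectral sequence, the latter because transfer splits it on every page.

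By Proposition~\ref{loc:completion}, $L_{(p)}\simeq BG^\wedge_p$ is simply connected, $p$-complete, and mod-$p$ equivalent to $BG$. The Broto--Levi characterization then yields \eqref{cyc:sylow-fibre}, exactly as in the proof of Theorem~\ref{cyc:loop}.

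\textbf{Step 3: the monoid equivalence.} For \eqref{cyc:sylow-monoid}, I apply Theorem~\ref{cyc:loop} to $X_{p^r,\omega}$ with $d=e$. Its $L'_{(p)}$ has the same mod-$p$ cohomology algebra and Bocksteins. The same Broto--Levi result (its Corollary~1.2 classifies the $p$-complete space itself, not only its loop space) gives $L_{(p)}\simeq L'_{(p)}$. Wedging with $S^2$, localizing, and applying Theorem~\ref{loc:main} (equation \eqref{loc:M-equivalence}) gives the $A_\infty$ equivalence.

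I expect the main obstacle to be the precise citation: whether Broto--Levi determine the homotopy type of $Z$, or only of $\Omega Z$. If only $\Omega Z$ is determined, I would instead argue that $BG^\wedge_p\simeq B(P\rtimes W)^\wedge_p$ by the Martino--Priddy/Oliver theorem, since the fusion systems agree (both are controlled by $N_G(P)$ for abelian $P$). That reduces the question to the case $G=C_{p^r}\rtimes C_e$ directly.

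\textbf{Step 4: homology.} Given \eqref{cyc:sylow-fibre}, Lemma~\ref{cyc:integral-fibre} computes $H_*(\Omega L;\Z_{(p)})$ with $d=e$. Corollary~\ref{hom:exponent-profiles} then produces \eqref{cyc:sylow-integral-groups}, exactly as in the proof of Theorem~\ref{cyc:integral-homology}. The mod-$p$ series follow by substituting into Lemma~\ref{hom:pid-words} as in Corollary~\ref{cyc:series}. Only local statements are claimed, so the annihilation theorem is not needed here.
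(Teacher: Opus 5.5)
Your proof is correct, but it is organized differently from the paper's.

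\textbf{The paper's route.} The paper uses a single input, the cyclic Sylow reduction $BG^\wedge_p\simeq B\Gamma^\wedge_p$ with $\Gamma=C_{p^r}\rtimes C_e$ (Benson--Greenlees). From it, $e>1$ follows because for $e=1$ the space $B\Gamma^\wedge_p\simeq BC_{p^r}$ is not simply connected. Both \eqref{cyc:sylow-fibre} and \eqref{cyc:sylow-monoid} are then inherited from Theorem~\ref{cyc:loop}, using $\Gamma\cong\Inn(X_{p^r,\omega})$ and Proposition~\ref{loc:completion}. The homology is transported through the monoid equivalence.

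\textbf{Your route.} You prove $e>1$ group-theoretically: Burnside's normal $p$-complement theorem contradicts the fact that $G_{\mathrm{ab}}$ is generated by the $p'$-element $[h]$. You recompute $H^*(G;\F_p)$ and its Bockstein spectral sequence via Swan's theorem and restriction to $P$, which transfer makes injective on every page. You then apply Broto--Levi directly to $L_{(p)}$ and feed the fibre straight into Corollary~\ref{hom:exponent-profiles}. All of this is sound, and your homology step is slightly more direct than the paper's.

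\textbf{Where it costs you.} The price appears in Step~3. The fibre equivalence does not by itself give the $A_\infty$ statement, which needs $L_{(p)}\simeq L'_{(p)}$ as spaces. The Broto--Levi result, as quoted in the paper, only yields the loop space.

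Your fallback closes this gap, but it is essentially the paper's reduction, and Oliver's theorem is more than you need. $BN_G(P)\to BG$ is a mod-$p$ cohomology isomorphism by Swan. So is $BN_G(P)\to B\bigl(N_G(P)/O_{p'}(C_G(P))\bigr)\cong B(P\rtimes W)$, since the kernel is a $p'$-group. Here $C_G(P)=P\times O_{p'}(C_G(P))$, and the extension by $W$ splits by Schur--Zassenhaus. Bousfield--Kan completion therefore already gives $BG^\wedge_p\simeq B(P\rtimes W)^\wedge_p$.

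If you take that reduction as the main argument, your separate Bockstein computation in Step~2 becomes unnecessary. That is exactly the economy the paper's proof gains.
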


\begin{proof}
The cyclic Sylow reduction in \cite[Section~1]{BensonGreenlees2021} gives
\[
 BG_p^\wedge\simeq B\Gamma_p^\wedge,
 \qquad \Gamma=C_{p^r}\rtimes C_e,
\]
where $e\mid p-1$ and $C_e$ acts faithfully on $C_{p^r}$.
By Proposition~\ref{loc:completion}, $BG_p^\wedge$ is simply connected.
If $e=1$, then $B\Gamma_p^\wedge\simeq BC_{p^r}$, since the classifying space of a finite $p$-group is already $p$-complete.
Its fundamental group is nontrivial because $r\ge1$, a contradiction.
Thus $e>1$, and hence $p$ is odd.

Choose $\omega\in(\Z/p^r)^\times$ of order $e$.
Then $\Gamma\cong\Inn(X_{p^r,\omega})$, with point stabilizer $C_e$.
Proposition~\ref{loc:completion} and Theorem~\ref{cyc:loop} give \eqref{cyc:sylow-fibre} and \eqref{cyc:sylow-monoid}.
The monoid equivalence and Proposition~\ref{hom:shift} identify the rack homology groups over $\Z_{(p)}$ with those of $X_{p^r,\omega}$.
Corollary~\ref{hom:rack-determines-quandle} therefore gives isomorphic quandle homology groups.
Localizing Theorem~\ref{cyc:integral-homology} for $X_{p^r,\omega}$ yields \eqref{cyc:sylow-integral-groups}.
Over $\F_p$, the same monoid equivalence and Proposition~\ref{hom:shift} give the $P_\cM$ and $R_X$ formulas of Corollary~\ref{cyc:series}; the universal coefficient theorem applied to \eqref{cyc:sylow-integral-groups} gives its quandle dimensions.
\end{proof}

\begin{corollary}[Regular cyclic Alexander quandles]
\label{cyc:regular-cyclic-integral}
Let $N>1$, let $\omega\in(\Z/N)^\times$, and put
$D=\operatorname{ord}_N(\omega)$.
Assume
\[
 \gcd(N,1-\omega)=\gcd(N,D)=1,
\]
and equip $X=\Z/N$ with $x*y=\omega x+(1-\omega)y$.
Write $N=\prod_{p\mid N}p^{r_p}$ and put
$d_p=\operatorname{ord}_{p^{r_p}}(\omega)$.
Then every $p\mid N$ is odd, $d_p>1$ divides $p-1$, and
\begin{equation}\label{cyc:regular-cyclic-groups}
 H_1^Q(X;\Z)\cong\Z,\qquad
 H_n^Q(X;\Z)\cong
 \bigoplus_{p\mid N}(C_{p^{r_p}})^{b_n(d_p)}
 \quad(n\ge2),
\end{equation}
where $b_n(d)$ is determined by \eqref{cyc:integral-multiplicity-series}.
In particular, for the dihedral quandle $R_N$ of any odd order $N>1$,
\begin{equation}\label{cyc:odd-dihedral-integral}
 H_n^Q(R_N;\Z)\cong(C_N)^{b_n(2)}\quad(n\ge2).
\end{equation}
\end{corollary}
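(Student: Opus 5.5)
The plan is to apply Theorem~\ref{cyc:sylow-homology} prime by prime and then assemble the results with the recovery paragraph for Alexander quandles. First, the hypothesis $\gcd(N,1-\omega)=1$ makes $1-\omega$ invertible, so $X$ is a finite connected Alexander quandle. By \eqref{low:affine-data}, $G=\Inn(X)\cong\Z/N\rtimes C_D$ and $H\cong C_D$. Since $\gcd(N,D)=1$, every prime $p\mid N$ is good. Moreover, $p\mid|X|=N$, so Corollary~\ref{loc:arithmetic-nontriviality} tells us $L_{(p)}$ is noncontractible.

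Next, fix $p\mid N$. Since $p\nmid D$, a Sylow $p$-subgroup of $G$ is $P=\Z/p^{r_p}$, the $p$-primary part of the normal subgroup $\Z/N$; it is cyclic and nontrivial. I will compute the inertial index $e=|N_G(P):C_G(P)|$ directly. Because $P$ is characteristic in the normal abelian subgroup $\Z/N$, it is normal in $G$, so $N_G(P)=G$. The conjugation action of $G$ on $P$ factors through $C_D$, with the generator acting by multiplication by $\omega$ (or $\omega^{-1}$, which does not matter). Hence $G/C_G(P)$ is cyclic of order $\operatorname{ord}_{p^{r_p}}(\omega)=d_p$, so $e=d_p$. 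Theorem~\ref{cyc:sylow-homology} then yields that $p$ is odd, that $d_p>1$, that $d_p\mid p-1$, and that
\[
 H_n^Q(X;\Z_{(p)})\cong(C_{p^{r_p}})^{b_n(d_p)}\quad(n\ge2).
\]
As an alternative check, one could apply Theorem~\ref{cyc:integral-homology} after observing that $X$ localized at $p$ resembles $X_{p^{r_p},\omega}$. The Sylow route is cleaner, since it only needs the computation of $e$.

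Finally, I assemble the integral groups. By \cite[Corollary~6.2]{Nosaka2013}, as recorded in the recovery paragraph, $H_n^Q(X;\Z)$ for $n\ge2$ is a finite group annihilated by $N$. It is therefore the direct sum of its localizations at the primes $p\mid N$, which gives \eqref{cyc:regular-cyclic-groups}. In degree one, connectedness gives $\Z$.

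For the dihedral case, take $\omega=-1$ with $N$ odd. Then $1-\omega=2$ is a unit, $D=2$ is coprime to $N$, and $d_p=2$ for each $p$. The summands $\bigoplus_p(C_{p^{r_p}})^{b_n(2)}$ combine via the Chinese remainder theorem into $(C_N)^{b_n(2)}$.

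The only step requiring real care is the identification $e=d_p$. In particular, I must check that the centralizer of $P$ is exactly the kernel of the action on $P$, and not on all of $\Z/N$; using the order of $\omega$ modulo $p^{r_p}$ rather than $D$ handles this.
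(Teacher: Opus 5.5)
Your proposal is correct and follows the paper's proof essentially step for step. Both identify $G=C_N\rtimes C_D$ and $H=C_D$. Both observe that the translation subgroup $P=C_{p^{r_p}}$ is a normal Sylow $p$-subgroup whose inertial index is $d_p$, the order of the image of $C_D$ in $\operatorname{Aut}(P)$. Both then apply Theorem~\ref{cyc:sylow-homology} at each $p\mid N$, assemble the primary components using the annihilation result in the recovery paragraph, and invoke the Chinese remainder theorem for the dihedral case. Your extra remark on the noncontractibility of $L_{(p)}$ is harmless but not needed.
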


\begin{proof}
The first coprimality assumption makes $X$ connected, and
\eqref{low:affine-data} gives $G=C_N\rtimes C_D$ and $H=C_D$.
For each $p\mid N$, the assumption $p\nmid D$ makes the translation subgroup $P=C_{p^{r_p}}$ a normal Sylow $p$-subgroup and gives $p\nmid|H|$.
The image of $C_D$ in $\operatorname{Aut}(P)$ has order $d_p$, so the inertial index is $d_p$.
Theorem~\ref{cyc:sylow-homology} therefore gives the asserted
$p$-primary groups and the conditions on $p$ and $d_p$.
The \hyperref[hom:alexander-integral-recovery]{recovery of integral homology}
combines these primary components into \eqref{cyc:regular-cyclic-groups}.
For $\omega=-1$ and odd $N$, all $d_p$ equal $2$;
the Chinese remainder theorem gives \eqref{cyc:odd-dihedral-integral}.
\end{proof}

\begin{example}\label{cyc:mixed-local-orders}
For $X=\Z/15$ with $x*y=2x-y$, the global order of $\omega=2$
is $4$, while $d_3=2$ and $d_5=4$.
Hence
\[
 H_n^Q(X;\Z)\cong(C_3)^{b_n(2)}\oplus(C_5)^{b_n(4)}
 \quad(n\ge2).
\]
The first $3$-primary torsion occurs in degree $3$, and the first
$5$-primary torsion occurs in degree $7$.
\end{example}

\section{The tetrahedral quandle}\label{tet:section}

We prove Theorem~\ref{intro:tetrahedral} by identifying a homotopy-fiber model and computing its homology over $\Z_{(2)}$.

Let $\F_4=\F_2[\omega]/(\omega^2+\omega+1)$ and equip its underlying set with the Alexander quandle operation
\[
 x*y=\omega x+(1-\omega)y.
\]
Choosing the other primitive element gives an isomorphic quandle via the Frobenius automorphism.
Denote this quandle by $X$ and put
\[
 G=\Inn(X),\qquad H=\operatorname{Stab}_G(0),\qquad
 \cM=B(G;X),\qquad L=\Cofib(BH\longrightarrow BG).
\]
By \eqref{low:affine-data}, $G$ is generated by translations and multiplication by $\omega$.
These are even permutations of four letters, and the translation group is the Klein four-group; thus
\begin{equation}\label{tet:groups}
 G=\F_4\rtimes\langle\omega\rangle\cong A_4,
 \qquad H=\langle\omega\rangle\cong C_3.
\end{equation}
Lemma~\ref{loc:L-properties} shows that $L$ is simply connected.

Let $c:\mathbb RP^3\to S^3$ be the based map of degree $1$ obtained by collapsing the standard $2$-skeleton, with compatible orientations.
Define
\begin{equation}\label{tet:definition-K}
 K_{\mathrm{tet}}=\hofib\!\bigl(
 \mathbb{R}P^3_{(2)}
 \xrightarrow{\, [4]_{(2)}\circ c_{(2)}\,}
 S^3_{(2)}\bigr).
\end{equation}
Here the homotopy fiber is based, and the degree $4$ refers to the action on $H_3$; the action on $\pi_3$ is multiplication by $8$, as shown below.

The fiber sequence $(S^3)^\wedge_2\to(BA_4)^\wedge_2\to BSO(3)^\wedge_2$ and the homology of $\Omega((BA_4)^\wedge_2)$ with coefficients in $\F_2$ are known \cite[Example~13.3]{BensonGreenleesShamir2013}.
Below, we identify the degree-$4$ map geometrically and compare it with the quandle model.

\begin{theorem}\label{tet:loop-model}
There is a homotopy equivalence of underlying spaces
\begin{equation}\label{tet:ordinary-equivalence}
 \Omega L_{(2)}\simeq K_{\mathrm{tet}}.
\end{equation}
The space $K_{\mathrm{tet}}$ is nilpotent and $2$-complete, and
\begin{equation}\label{tet:low-homotopy}
 \pi_1K_{\mathrm{tet}}\cong C_2,\qquad \pi_2K_{\mathrm{tet}}\cong C_8.
\end{equation}
In the notation of \eqref{cyc:sphere-fibre-definition}, its universal cover has the ordinary homotopy type
\begin{equation}\label{tet:universal-cover}
 \widetilde K_{\mathrm{tet}}\simeq S^3\{8\}_{(2)}.
\end{equation}
The loop space comparison for the covering monoid is the $A_\infty$ equivalence
\begin{equation}\label{tet:monoid-loop}
 \cM_{(2)}\simeq_{A_\infty}\Omega\bigl((S^2\vee L)_{(2)}\bigr).
\end{equation}
An $A_\infty$ structure on $K_{\mathrm{tet}}$ is specified only when the multiplication is transferred through \eqref{tet:ordinary-equivalence}.
\end{theorem}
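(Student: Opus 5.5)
We use the good prime $p=2$: here $H\cong C_3$ by \eqref{tet:groups}, so $2\nmid|H|$. Theorem~\ref{loc:main} with $R=\Z_{(2)}$ then gives \eqref{tet:monoid-loop} at once, and Proposition~\ref{loc:completion} gives $L_{(2)}\simeq (BA_4)^\wedge_2$. The remaining task is to identify $\Omega((BA_4)^\wedge_2)$ with $K_{\mathrm{tet}}$.

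\textbf{Step 1: the fiber sequence.} The inclusion $A_4\subset SO(3)$, as rotations of a tetrahedron, gives a fibration $SO(3)/A_4\to BA_4\to BSO(3)$. Write $T^*\subset SU(2)$ for the binary tetrahedral group, of order $24$, so that $SO(3)/A_4\cong SU(2)/T^*$. Since $T^*_{\mathrm{ab}}\cong C_3$ and $H_3(SU(2)/T^*;\Z)\cong\Z$, the $2$-completion of $SU(2)/T^*$ is $(S^3)^\wedge_2$. The fiber lemma of Bousfield--Kan applies because $\pi_1BSO(3)=C_2$ acts nilpotently on the mod-$2$ homology of the fiber. It yields the fibration $(S^3)^\wedge_2\to (BA_4)^\wedge_2\to BSO(3)^\wedge_2$ recorded in \cite[Example~13.3]{BensonGreenleesShamir2013}. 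Looping it gives a fiber sequence
\[
 \Omega (BA_4)^\wedge_2\to SO(3)^\wedge_2\xrightarrow{\ \partial\ }(S^3)^\wedge_2 .
\]
Since $SO(3)=\mathbb RP^3$ and both spaces are nilpotent of finite type with finite odd torsion aside, the $2$-completions can be replaced by $2$-localizations $\mathbb RP^3_{(2)}$ and $S^3_{(2)}$.

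\textbf{Step 2: identifying $\partial$ (the main obstacle).} By obstruction theory (Hopf), based homotopy classes $\mathbb RP^3\to S^3_{(2)}$ correspond to $H^3(\mathbb RP^3;\Z_{(2)})\cong\Z_{(2)}$. They are therefore detected by their degree on $H_3$, and each class factors as $[N]\circ c$. To compute the degree, I will use the description of $\partial$ as the orbit map $g\mapsto gA_4$. Precompose with the double cover $SU(2)\to SO(3)$. The composite $SU(2)\to SU(2)/T^*$ is a covering of degree $24$, and the $2$-localization $SU(2)/T^*\to S^3_{(2)}$ is an $H_3$-isomorphism. Hence the composite $SU(2)\to S^3_{(2)}$ has degree $24$, which is $8$ up to a $2$-local unit. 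Since the double cover has degree $2$, $\partial$ has degree $4$ on $H_3(\mathbb RP^3)$, and by definition $c$ has degree $1$ there. So $\partial\simeq[4]_{(2)}\circ c_{(2)}$ up to a unit, and \eqref{tet:ordinary-equivalence} follows. On $\pi_3$, the class $c_*$ is multiplication by $2$, because $\pi_3\mathbb RP^3=\pi_3S^3$ and $S^3\to\mathbb RP^3\to S^3$ has degree $2$. Hence $\partial_*$ is multiplication by $8$ on $\pi_3$. The delicate points are checking orientation and unit ambiguities and confirming that the completed orbit map really is the connecting map.

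\textbf{Step 3: consequences.} The long exact sequence of the fibration in \eqref{tet:definition-K} gives $\pi_1K_{\mathrm{tet}}\cong\pi_1\mathbb RP^3=C_2$, using $\pi_2\mathbb RP^3=0$ and $\pi_2 S^3=0$. It also gives $\pi_2K_{\mathrm{tet}}\cong\operatorname{coker}(\times8\text{ on }\Z_{(2)})=C_8$, which proves \eqref{tet:low-homotopy}. For the universal cover, pull back along the double cover $S^3\to\mathbb RP^3$: the composite $S^3_{(2)}\to S^3_{(2)}$ has degree $8$, so the pulled-back fiber is $S^3\{8\}_{(2)}$, a simply connected double cover of $K_{\mathrm{tet}}$, which proves \eqref{tet:universal-cover}. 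Finally, $K_{\mathrm{tet}}\simeq\Omega (BA_4)^\wedge_2$ is the loop space of a simply connected $2$-complete space. It is therefore an H-space, hence nilpotent, and it is $2$-complete because loops of a $2$-complete space are $2$-complete.
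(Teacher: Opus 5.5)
Your route is the paper's route. You use the homogeneous-space fibration $SO(3)/A_4\to BA_4\to BSO(3)$, the mod-$2$ fiber lemma, the connecting map identified with the quotient of degree $12=3\cdot4$, obstruction theory on $\mathbb RP^3$, the long exact sequence, and the pullback along $S^3\to\mathbb RP^3$. The monoid statement via Theorem~\ref{loc:main} is also handled as in the paper. There is, however, a genuine gap where you pass from the completed fiber sequence to the localized model $K_{\mathrm{tet}}$.

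Your claim that the $2$-completions ``can be replaced by $2$-localizations'' is not justified by nilpotence and finite type, and odd torsion is not the issue. The spaces $(S^3)^\wedge_2$ and $S^3_{(2)}$ are not equivalent, since their $\pi_3$ are the $2$-adic integers $\Z_2$ and $\Z_{(2)}$; the same holds for $\mathbb RP^3$. Only the homotopy fibers agree, and only because the degree is nonzero. For a null map, the two fibers would have $\pi_2\cong\Z_2$ and $\pi_2\cong\Z_{(2)}$, respectively. Your final paragraph then deduces nilpotence and $2$-completeness of $K_{\mathrm{tet}}$ from $K_{\mathrm{tet}}\simeq\Omega(BA_4)^\wedge_2$. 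But that is exactly the equivalence these properties are needed to prove, so the argument as written is circular.

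The fix, which is the paper's argument, reverses the order. Work directly with $f=[4]_{(2)}\circ c_{(2)}$ and first show that $K_{\mathrm{tet}}$ is nilpotent; it is the fiber of a map out of the nilpotent space $\mathbb RP^3_{(2)}$. Next show that all its homotopy groups are finite $2$-groups. For this, use your universal cover $S^3\{8\}_{(2)}$, injectivity of multiplication by $8$ on $\Z_{(2)}$, and finiteness of $\pi_n(S^3)$ for $n>3$. Hence $K_{\mathrm{tet}}$ is $2$-complete. Since $S^3_{(2)}$ is simply connected, the mod-$2$ fiber lemma gives $K_{\mathrm{tet}}\simeq(K_{\mathrm{tet}})^\wedge_2\simeq\hofib(f^\wedge_2)$. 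Because $2$-completion does not see a prior $2$-localization of $\mathbb RP^3$ and $S^3$, this is the fiber of $([4]\circ c)^\wedge_2$. That map agrees with your $\partial$ after composing with the completed degree-$3$ self-equivalence. Only then does $\Omega L_{(2)}\simeq\Omega(BA_4)^\wedge_2\simeq K_{\mathrm{tet}}$ follow, and with it the nilpotence and completeness statements.

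Two smaller slips. First, $\pi_1BSO(3)=\pi_0SO(3)=0$; the group $C_2$ is $\pi_2BSO(3)$. Simple connectivity of $BSO(3)$ is what makes the fiber lemma apply, both to your fibration and to the path fibration giving $\Omega(BSO(3)^\wedge_2)\simeq SO(3)^\wedge_2$. Second, $SU(2)/T^*$ has non-nilpotent fundamental group, so it has no ``$2$-localization'' in the sense you use. The map to $S^3_{(2)}$ should be the degree-one collapse followed by localization.
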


\begin{proof}
Realize $A_4$ as the rotation group of an oriented regular tetrahedron and put
\[
 Q=SO(3)/A_4\cong S^3/(2T),\qquad |2T|=24.
\]
For the binary tetrahedral group $2T\cong Q_8\rtimes C_3$, the $C_3$-coinvariant quotient of $Q_8/[Q_8,Q_8]\cong C_2^2$ vanishes, so $(2T)_{\mathrm{ab}}\cong C_3$.
By Poincar\'e duality, $Q$ is a homology $3$-sphere with coefficients in $\F_2$.
Thus the orientation-preserving degree-$1$ collapse map $e:Q\to S^3$ is an $\F_2$-homology equivalence, and $e^\wedge_2:Q^\wedge_2\to(S^3)^\wedge_2$ is an equivalence.

The inclusion $A_4\subset SO(3)$ gives the homogeneous-space fiber sequence
\begin{equation}\label{tet:homogeneous-fibration}
 Q\longrightarrow BA_4\longrightarrow BSO(3).
\end{equation}
Since $BSO(3)$ is simply connected, the mod-$2$ fiber lemma \cite[Chapter II, \S5]{BK} applies both to this sequence and to the path-space fibration of $BSO(3)$.
Thus completion preserves the sequence and gives $\Omega((BSO(3))^\wedge_2)\simeq SO(3)^\wedge_2$.
Under $SO(3)\simeq\Omega BSO(3)$, the connecting map is the quotient $q:SO(3)\to Q$.
Using $e^\wedge_2$, we identify
$\Omega((BA_4)^\wedge_2)$ with the homotopy fiber of
$(e\circ q)^\wedge_2$.
Under $SO(3)\cong\mathbb RP^3$, the covering $q$ has
degree $12$.
Obstruction theory gives
$[\mathbb RP^3,S^3]\cong H^3(\mathbb RP^3;\Z)\cong\Z$,
so $e\circ q\simeq[12]\circ c\simeq[3]\circ[4]\circ c$.
The completed degree-$3$ map of $S^3$ is a homotopy
equivalence. Postcomposing with its homotopy inverse
therefore gives
\begin{equation}\label{tet:completed-degree-four}
 \Omega((BA_4)^\wedge_2)\simeq
 \hofib\!\bigl((\mathbb RP^3)^\wedge_2
       \xrightarrow{\,([4]\circ c)^\wedge_2\,}
       (S^3)^\wedge_2\bigr).
\end{equation}

For the localized fiber, let $\rho:S^3\to\mathbb RP^3$ be the universal covering and put $f=[4]_{(2)}\circ c_{(2)}$.
Since $\mathbb RP^3\cong SO(3)$ is simple, $\rho_{(2)}$ models the universal cover of $\mathbb RP^3_{(2)}$.
The composite $f\circ\rho_{(2)}$ has degree $8$, so the homotopy exact sequence gives $\pi_1K_{\mathrm{tet}}\cong C_2$ and $\pi_2K_{\mathrm{tet}}\cong\Z_{(2)}/8\Z_{(2)}\cong C_8$.
The map $K_{\mathrm{tet}}\to\mathbb RP^3_{(2)}$ induces an isomorphism on fundamental groups, so the pullback along $\rho_{(2)}$ is $\widetilde K_{\mathrm{tet}}$.
This is the fiber of the composite map of degree $8$, giving \eqref{tet:universal-cover}.

Since $f$ has nilpotent domain $\mathbb RP^3_{(2)}$, its homotopy fiber $K_{\mathrm{tet}}$ is nilpotent \cite[Proposition~4.4.1(i)]{MayPonto}.
The universal-cover description, injectivity of multiplication by $8$ on $\Z_{(2)}$, and finiteness of $\pi_n(S^3)$ for $n>3$ show that all homotopy groups of $K_{\mathrm{tet}}$ are finite $2$-groups.
Hence $K_{\mathrm{tet}}$ is $2$-complete \cite[Theorem~2.6(1)]{BarthelBousfield}.
Since $S^3_{(2)}$ is simply connected, the mod-$2$ fiber lemma identifies $(K_{\mathrm{tet}})^\wedge_2$ with the homotopy fiber of $f^\wedge_2$.
The $2$-completions of $\mathbb RP^3$ and $S^3$ are unchanged by $2$-localization, so the $2$-completeness of $K_{\mathrm{tet}}$ identifies $K_{\mathrm{tet}}$ with the fiber in \eqref{tet:completed-degree-four}.
Finally, $|H|=3$ and Proposition~\ref{loc:completion} give $L_{(2)}\simeq(BA_4)^\wedge_2$, proving \eqref{tet:ordinary-equivalence}, and Theorem~\ref{loc:main} gives \eqref{tet:monoid-loop}.
\end{proof}

\begin{remark}\label{tet:broto-levi}
The universal-cover computation agrees with \cite[Proposition~2.2]{BrotoLevi}.
This geometric comparison also retains the map on $\mathbb RP^3$ needed to recover $\Omega L_{(2)}$ itself.
For the rack space of the tetrahedral quandle, the author had already obtained $\pi_2(\mathcal{R}_X)\cong\Z\oplus C_2\oplus C_8$ \cite[Proposition~7.10, equation~(18)]{Nosaka2015Homotopical}.
\end{remark}

\subsection{Fiber homology and integral quandle homology}\label{tet:additive}

We compute the homology of the fiber with coefficients in $\Z_{(2)}$ and determine the integral quandle homology of the tetrahedral quandle in all positive degrees.

\begin{lemma}\label{tet:integral-K}
Put $R=\Z_{(2)}$.
The homology of the fiber $K_{\mathrm{tet}}$ in \eqref{tet:definition-K} is
\begin{equation}\label{tet:integral-K-groups}
 H_0(K_{\mathrm{tet}};R)=R,\qquad
 H_{2j}(K_{\mathrm{tet}};R)\cong C_4\quad(j\geq1),\qquad
 H_{2j+1}(K_{\mathrm{tet}};R)\cong C_2\quad(j\geq0).
\end{equation}
\end{lemma}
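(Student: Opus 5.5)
The plan is to compute $H_*(K_{\mathrm{tet}};R)$ with the Serre spectral sequence of the fibration $\Omega S^3_{(2)}\to K_{\mathrm{tet}}\to\mathbb RP^3_{(2)}$, which is obtained by rotating the defining fiber sequence \eqref{tet:definition-K}. Over $R=\Z_{(2)}$ the base has $H_0=R$, $H_1=C_2$, $H_2=0$, $H_3=R$. The fiber has $H_*(\Omega S^3;R)=T_R(x_2)$, so it is free of rank one in each even degree. The total space $K_{\mathrm{tet}}$ is nilpotent and $\pi_1=C_2$ acts on $H_*(\Omega S^3;R)$. I expect this action to be trivial: it comes from the holonomy of a principal $\Omega S^3$-fibration pulled back from the path fibration over a simply connected space, so it is induced by translation in a connected group. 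Hence the local coefficients are trivial.

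Next I will determine the transgression. The first differential to check is $d_3\colon E^3_{3,0}=H_3(\mathbb RP^3;R)\to E^3_{0,2}=H_2(\Omega S^3;R)$. This differential is the composite of $f_*$ on $H_3$, which is multiplication by $4$ because $c$ has degree $1$ and the sphere map has degree $4$, with the suspension isomorphism. So $d_3(\text{fund})=4x$. By the standard module structure, the spectral sequence is a module over $H_*(\Omega S^3)$ via the action of the fiber, so $d_3$ on $E_{3,2k}$ sends $[\mathbb RP^3]\otimes x^k$ to $4x^{k+1}$, which is injective. The row $E_{1,2k}=C_2\otimes x^k$ receives and emits no nonzero differentials, since $E_{2,*}=0$ and $d_2\colon E_{3,*}\to E_{1,*}$ lands in the $C_2$ column. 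Here I need to argue $d_2=0$. I will do this by comparing with the universal cover: pulling back along $\rho\colon S^3\to\mathbb RP^3$ gives the fibration for $S^3\{8\}_{(2)}$. Alternatively, I can use that $d_2$ is $\Omega S^3$-linear and that on the bottom row it is $H_3\to H_1$ of the base, which is zero.

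Finally I read off $E^\infty$. In degree $0$ it is $R$. In degree $2j\ge2$ it is $E_{0,2j}=R/4R=C_4$. In degree $2j+1$ it is $E_{1,2j}=C_2$, since $E_{3,2j-2}$ is killed by the injective $d_3$. There are no extension problems because each total degree has a single nonzero term. This gives \eqref{tet:integral-K-groups}.

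The main obstacle is justifying the multiplicative or module structure, namely that $d_3(\iota\otimes x^k)=4x^{k+1}$ for all $k$, which I rely on above. As a backup I would use the universal cover: by \eqref{tet:universal-cover} and Lemma~\ref{cyc:integral-fibre}-style input from \cite[Section~1.1]{BrotoLevi}, $H_*(S^3\{8\}_{(2)};R)$ is $C_8$ in each positive even degree. I would then run the Cartan--Leray spectral sequence for the double cover $\widetilde K_{\mathrm{tet}}\to K_{\mathrm{tet}}$ to cross-check the orders $C_4$ and $C_2$.
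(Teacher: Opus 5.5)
Your proposal is correct and follows the paper's argument: the Serre spectral sequence of $\Omega S^3_{(2)}\to K_{\mathrm{tet}}\to\mathbb RP^3_{(2)}$ has trivial local coefficients, its only possible differentials are $d_3\colon E^3_{3,2j}\to E^3_{0,2j+2}$, each is multiplication by $4$ up to a unit, and this leaves a single term $C_4$ or $C_2$ in each positive degree. The paper obtains $d_3$ in every row at once by naturality with the path fibration over $S^3_{(2)}$, where each such $d_3$ is an isomorphism because the total space is contractible, so your module-structure step is unnecessary; also, $d_2=0$ simply because all odd rows vanish, since $d_2$ maps $E^2_{3,2k}$ to $E^2_{1,2k+1}=0$ and not into $E^2_{1,2k}$ as you wrote.
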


\begin{proof}
The defining map $f=[4]_{(2)}\circ c_{(2)}$ gives the pullback fibration
\begin{equation}\label{tet:Serre-fibration}
 \Omega S^3_{(2)}\longrightarrow K_{\mathrm{tet}}
 \longrightarrow\mathbb{R}P^3_{(2)}.
\end{equation}
The local coefficient action factors through $\pi_1(S^3_{(2)})=0$, so the Serre spectral sequence is
\[
 E^2_{a,b}=H_a(\mathbb{R}P^3_{(2)};R)
                  \otimes_R H_b(\Omega S^3_{(2)};R).
\]
The nonzero columns are $a=0,1,3$, with groups $R,C_2,R$, respectively, in every nonnegative even row.
Thus $d_2=0$, and the only possibly nonzero subsequent differentials are
\[
 d_3:E^3_{3,2j}\longrightarrow E^3_{0,2j+2}
       \qquad(j\geq0).
\]
In the path-space fibration over $S^3_{(2)}$, contractibility of the total space makes the corresponding $d_3$ differentials isomorphisms.
The comparison is the identity on the fiber and multiplication by $4$ on $H_3$ of the base, so naturality makes each displayed differential multiplication by $4$ up to a unit.
Consequently, in positive degrees the surviving terms are $C_4$ in column $0$ and $C_2$ in column $1$.
There is only one term in each total degree, so there are no additive extension problems.
\end{proof}

\begin{proof}[Proof of Theorem~\ref{intro:tetrahedral}]
Put $R=\Z_{(2)}$.
The ordinary equivalence in Theorem~\ref{tet:loop-model} and Lemma~\ref{tet:integral-K} give the following input to Corollary~\ref{hom:exponent-profiles}:
\begin{equation}\label{tet:integral-kappa}
 K_{\ge1}(t)=\frac{t}{1-t},\qquad
 K_{\ge2}(t)=\frac{t^2}{1-t^2},\qquad K_{\ge e}(t)=0\quad(e\ge3).
\end{equation}
Since $|H|=3$ is a unit in $R$, substitution in \eqref{hom:exponent-profile-series} gives
\[
 Q_{\ge1}(t)=\frac{t^2(1+t)}{1-t-t^2-t^3},\qquad
 Q_{\ge2}(t)=\frac{t^3}{1-t-t^3},\qquad
 Q_{\ge e}(t)=0\quad(e\ge3).
\]
Thus the only torsion summands are $C_2$ and $C_4$.
Their multiplicities $a_n$ and $b_n$ in degree $n\ge2$ are the coefficients of $t^n$ in $Q_{\ge1}-Q_{\ge2}$ and $Q_{\ge2}$, respectively.
Since $X$ is a connected Alexander quandle of order $4$, the \hyperref[hom:alexander-integral-recovery]{recovery of integral homology} gives the asserted integral groups.
\end{proof}

The universal coefficient theorem also gives $H_Q^1(X;\Z)\cong\Z$, $H_Q^2(X;\Z)=0$, and $H_Q^n(X;\Z)\cong C_2^{a_{n-1}}\oplus C_4^{b_{n-1}}$ for $n\ge3$.

\begin{remark}[Low-degree computations and an existing conjecture]\label{tet:integral-check}
Put $a_1=b_1=0$, $s_n=a_n+b_n$, and $q_n=\dim_{\F_2}H_Q^n(X;\F_2)$.
Theorem~\ref{intro:tetrahedral} and the universal coefficient theorem give the following low-degree values.
\begin{center}
\begin{tabular}{r*{8}{r}}
\toprule
$n$&1&2&3&4&5&6&7&8\\
\midrule
$a_n$&0&1&1&2&5&9&17&33\\
$b_n$&0&0&1&1&1&2&3&4\\
$s_n$&0&1&2&3&6&11&20&37\\
$q_n$&1&1&3&5&9&17&31&57\\
\bottomrule
\end{tabular}
\end{center}

The groups through degree $7$ agree with the computations of Niebrzydowski--Przytycki \cite[Example~13(ii)]{NiebrzydowskiPrzytycki2010}, who denote the tetrahedral quandle by $S_4$.

Theorem~\ref{intro:tetrahedral} proves their conjectured recurrence $b_n=b_{n-1}+b_{n-3}$ $(n\ge4)$ for the multiplicities of the $C_4$ summands.
In degree $8$, however, the group is $H_8^Q(X;\Z)\cong C_2^{33}\oplus C_4^4$, differing from their prediction $C_2^{32}\oplus C_4^4$ by one $C_2$ direct summand.
Their prediction for the entire group therefore requires a correction.
The corresponding mod-$2$ dimension is $57$ in the table, whereas their prediction would give $56$.
\end{remark}

\section{A product Alexander quandle}
\label{ex:comparisons}

The following example illustrates the tensor and Tor terms in the integral
calculation when the Sylow subgroup is not cyclic.

\begin{example}\label{ex:product-cyclic-blocks}
Let $X=(\F_7^2,T)$ with $T=\operatorname{diag}(-1,2)$ and
$x*y=Tx+(1-T)y$.
Then $X$ is a connected regular Alexander quandle of order $49$, and
\[
 H_1^Q(X;\Z)\cong\Z,\qquad
 H_n^Q(X;\Z)\cong C_7^{\,b_n}\quad(n\ge2),
\]
where, on putting
\[
 U(t)=\frac{t^2}{1-t^2}+\frac{t^4}{1-t^4}
       +\frac{(1+t)t^6}{(1-t^2)(1-t^4)},
\]
the multiplicities are determined by
\[
 \sum_{n\ge2}b_nt^n
 =\frac{t(1+t)U(t)}{1-t(1+t)U(t)}
 =t^3+t^4+2t^5+3t^6+4t^7+8t^8+O(t^9).
\]
In particular, $H_n^Q(X;\Z)$ is annihilated by $7$ for every $n\ge2$.
\end{example}

\begin{proof}
Write $X=X_1\times X_2$ with $X_1=(\F_7,-1)$ and $X_2=(\F_7,2)$.
The two automorphisms have relatively prime orders $2$ and $3$, so
$\langle T\rangle=\langle-1\rangle\times\langle2\rangle$.
Equation~\eqref{low:affine-data} therefore gives
\[
 G=\Inn(X)\cong(C_7\rtimes C_2)\times(C_7\rtimes C_3),
 \qquad H=\operatorname{Stab}_G(0)\cong C_6.
\]
Put $G_i=\Inn(X_i)$, $H_i=\operatorname{Stab}_{G_i}(0)$, and
$L_i=\Cofib(BH_i\to BG_i)$; also put $L=\Cofib(BH\to BG)$.
By Proposition~\ref{loc:completion}, the product map
$BG_1\times BG_2\to(L_1)_{(7)}\times(L_2)_{(7)}$
is a mod-$7$ homology equivalence with simply connected, $7$-complete target.
Completing this map and applying the same proposition to $(G,H)$ gives
$L_{(7)}\simeq(L_1)_{(7)}\times(L_2)_{(7)}$.
Thus Theorem~\ref{cyc:loop} yields the ordinary homotopy equivalence
\[
 K:=\Omega L_{(7)}\simeq S^3\{7\}_{(7)}\times S^5\{7\}_{(7)}.
\]

Put $R=\Z_{(7)}$.
Lemma~\ref{cyc:integral-fibre} and the split K\"unneth theorem show
that all positive-degree homology groups of $K$ are sums of $C_7$.
Their multiplicity series is $U(t)$: the first two terms come from
the factors, and the last term records both their tensor product and
$\operatorname{Tor}_1^R(C_7,C_7)\cong C_7$, one degree higher.
Hence $K_{\ge1}=U$ and $K_{\ge e}=0$ for $e\ge2$ in
Corollary~\ref{hom:exponent-profiles}.
That corollary and the
\hyperref[hom:alexander-integral-recovery]{recovery of integral homology}
give the asserted integral groups.
\end{proof}

For any field $k$ of characteristic $7$, the universal coefficient theorem gives
\[
 A_X(t)=\sum_{n\ge0}\dim_k H_n(\Omega L;k)t^n
 =1+(1+t)U(t)
 =\frac{(1+t^3)(1+t^5)}{(1-t^2)(1-t^4)}.
\]
Substituting $\kappa=A_X$ into \eqref{hom:pid-series} with $\Lambda=k$ determines all rack homology dimensions and all positive-degree quandle homology dimensions over $k$.

\section{Two non-Alexander families}\label{ex:nonalex-section}
\label{ex:non-alexander-homology}

The same local calculation applies to two families of non-Alexander quandles.
For an integer $e\ge2$, retain $b_n(e)$ from Theorem~\ref{cyc:sylow-homology} and define $a_n(e)$ by the first series below; we recall the second series for comparison:
\begin{equation}\label{ex:nonalex-multiplicities}
 \begin{aligned}
 \sum_{n\ge1}a_n(e)t^n
   &=\frac{t^{2e-1}}
     {(1-t)(1-t-t^{2e-2}-t^{2e})},\\
 \sum_{n\ge2}b_n(e)t^n
   &=\frac{t^{2e-1}(1+t)}
     {1-t^{2e-2}-t^{2e-1}-t^{2e}}.
 \end{aligned}
\end{equation}
Let $X$ be a finite connected quandle, put $G=\Inn(X)$ and
$H=\operatorname{Stab}_G(x_0)$, and set $L=\Cofib(BH\to BG)$.
Let $\ell$ be a prime with $\ell\nmid|H|$, and suppose that there is an ordinary homotopy equivalence
\[
 \Omega L_{(\ell)}\simeq S^{2e-1}\{\ell^r\}_{(\ell)},\qquad r\ge1.
\]
The positive-degree integral homology of this fiber is $C_{\ell^r}$ in degrees
divisible by $2e-2$ and zero otherwise \cite[Section~1.1]{BrotoLevi}.
Substitution into Lemma~\ref{hom:pid-words} and
Corollary~\ref{hom:exponent-profiles} therefore gives
\begin{equation}\label{ex:nonalex-local-groups}
 \begin{aligned}
 H_n^R(X;\Z_{(\ell)})
   &\cong\Z_{(\ell)}\oplus(C_{\ell^r})^{a_n(e)}&&(n\ge1),\\
 H_1^Q(X;\Z_{(\ell)})&\cong\Z_{(\ell)},&\\[-2pt]
 H_n^Q(X;\Z_{(\ell)})
   &\cong(C_{\ell^r})^{b_n(e)}&&(n\ge2).
 \end{aligned}
\end{equation}

To verify that the following examples are not Alexander, we use a common criterion.
In a finite connected Alexander quandle, $1-T$ is invertible, so every left translation $y\mapsto x*y$ is bijective.

\begin{example}[The conjugation quandle of transpositions]
\label{ex:transposition-homology}
Let $\ell\ge5$ be prime and $m\in\{\ell,\ell+1\}$.
Let $T_m$ be the set of transpositions in $S_m$, with $x*y=y^{-1}xy$.
Then $T_m$ is a connected non-Alexander quandle, and
\begin{equation}\label{ex:transposition-groups}
 \begin{aligned}
 H_n^R(T_m;\Z_{(\ell)})
   &\cong\Z_{(\ell)}\oplus(C_\ell)^{a_n(\ell-1)}&&(n\ge1),\\
 H_1^Q(T_m;\Z_{(\ell)})&\cong\Z_{(\ell)},&\\[-2pt]
 H_n^Q(T_m;\Z_{(\ell)})
   &\cong(C_\ell)^{b_n(\ell-1)}&&(n\ge2).
 \end{aligned}
\end{equation}
In particular,
\[
 \begin{gathered}
 H_n^Q(T_m;\Z_{(\ell)})=0\quad(2\le n\le2\ell-4),\\
 H_{2\ell-3}^Q(T_m;\Z_{(\ell)})
  \cong H_{2\ell-2}^Q(T_m;\Z_{(\ell)})\cong C_\ell.
 \end{gathered}
\]
The torsion subgroups of rack homology in these last two degrees are
$C_\ell$ and $(C_\ell)^2$, respectively.
\end{example}

\begin{proof}
Transpositions generate $S_m$, and its conjugation action on $T_m$ is
faithful and transitive. Thus
\[
 G=\Inn(T_m)\cong S_m,\qquad
 H=C_{S_m}((12))\cong C_2\times S_{m-2}.
\]
Since $m=\ell$ or $\ell+1$, we have $\ell\nmid|H|$ and a Sylow
$\ell$-subgroup $P\cong C_\ell$.
Identify its support with $\F_\ell$ so that $P$ acts by translations.
The permutations $x\mapsto ax$, $a\in\F_\ell^\times$, extended by the
identity off the support, realize every automorphism of $P$.
Hence $|N_G(P):C_G(P)|=\ell-1$.
Theorem~\ref{cyc:sylow-homology} and \eqref{ex:nonalex-local-groups},
with $e=\ell-1$ and $r=1$, give \eqref{ex:transposition-groups}.
The first nonzero groups follow by expanding \eqref{ex:nonalex-multiplicities}.
Finally, $(12)*(12)=(12)*(34)=(12)$, so the left translation by $(12)$ is not injective.
The preceding criterion shows that $T_m$ is not Alexander.
\end{proof}

\begin{example}[Rank-one symplectic quandles]
\label{ex:symplectic-rank-one-homology}
Let $q=p^f>27$, where $p$ is prime and $f\ge1$.
The symplectic quandle
\[
 X_q=\F_q^2\setminus\{0\},\qquad v*w=v+\det(v,w)w,
\]
is connected and is not Alexander.
Put $R=\Z[1/p]$ and $N=q^2-1$. Then
\begin{equation}\label{ex:symplectic-rank-one-groups}
 \begin{aligned}
 H_n^R(X_q;R)&\cong R\oplus(C_N)^{a_n(2)}&&(n\ge1),\\
 H_1^Q(X_q;R)&\cong R,&\\[-2pt]
 H_n^Q(X_q;R)&\cong(C_N)^{b_n(2)}&&(n\ge2).
 \end{aligned}
\end{equation}
Here
\[
 \sum_{n\ge1}a_n(2)t^n
  =\frac{t^3}{(1-t)(1-t-t^2-t^4)},\qquad
 \sum_{n\ge2}b_n(2)t^n=\frac{t^3}{1-t-t^3}.
\]
For example,
\[
 \begin{gathered}
 H_2^Q(X_q;R)=0,\qquad
 H_3^Q(X_q;R)\cong H_4^Q(X_q;R)\cong H_5^Q(X_q;R)\cong C_N,\\
 H_6^Q(X_q;R)\cong(C_N)^2.
 \end{gathered}
\]
Thus \eqref{ex:symplectic-rank-one-groups} determines all primary
components away from $p$ in every positive degree.
\end{example}

\begin{proof}
The linear extension of a right translation is a transvection.
Replacing its direction vector $w$ by $cw$ multiplies its parameter by $c^2$.
In characteristic $2$, every element is a square; in odd characteristic,
every element is a difference of two squares.
Composing these transvections and their inverses therefore gives all
transvections, which generate $\operatorname{SL}_2(\F_q)$.
Its natural action on nonzero vectors is faithful and transitive, and
\[
 G=\Inn(X_q)\cong\operatorname{SL}_2(\F_q),\qquad
 H=\operatorname{Stab}_G((1,0))\cong(\F_q,+)\cong(C_p)^f.
\]
In particular, $X_q$ is connected and every prime $\ell\ne p$ is good.

For each such prime, we claim that there is an ordinary homotopy equivalence
\begin{equation}\label{ex:symplectic-local-fibre}
 \Omega L_{(\ell)}\simeq\bigl(S^3\{N\}\bigr)_{(\ell)}.
\end{equation}
If $\ell$ is odd and divides $N$, a Sylow $\ell$-subgroup $P$ lies in a split or nonsplit maximal torus of $G$.
These tori are cyclic of orders $q-1$ and $q+1$, respectively, so $P$ has order $\ell^{v_\ell(N)}$, where $v_\ell(N)$ is the exponent of $\ell$ in $N$.
A nontrivial element of $P$ is noncentral and has that torus as its centralizer.
The quotient of the torus normalizer by the torus is $C_2$, acting by inversion.
Since $P$ is characteristic in the torus and $N_G(P)$ normalizes $C_G(P)$, it follows that $N_G(P)/C_G(P)\cong C_2$.
Theorem~\ref{cyc:sylow-homology} gives \eqref{ex:symplectic-local-fibre}.
If $q$ is odd and $\ell=2$, put $r=v_2(N)\ge3$.
The Broto--Levi model identifies
$\Omega\bigl((B\operatorname{SL}_2(\F_q))^\wedge_2\bigr)$ with
$S^3\{2^r\}$ \cite[Section~1.1 and Lemma~2.1]{BrotoLevi};
Proposition~\ref{loc:completion} then gives
\eqref{ex:symplectic-local-fibre} at $2$.
If $\ell\nmid pN$, both sides of \eqref{ex:symplectic-local-fibre}
are contractible, since $|G|=qN$.

For $\ell\mid N$, apply \eqref{ex:nonalex-local-groups} with
$e=2$ and $r=v_\ell(N)$.
At the remaining primes $\ell\ne p$, Lemma~\ref{hom:pid-words}
gives the free rack summand and vanishing quandle homology in degrees at least two.
The homology groups are finitely generated in each degree, so assembling
the primary components proves \eqref{ex:symplectic-rank-one-groups}.
Finally, if $c\in\F_q^\times\setminus\{1\}$, then $v*v=v*(cv)=v$.
Thus a left translation is not injective, and the same criterion shows that $X_q$ is not Alexander.
\end{proof}

\begin{remark}\label{ex:symplectic-known-low-degrees}
For odd $q>27$, the author computed
\[
 H_2^Q(X_q;\Z)\cong(C_p)^f,\qquad
 H_3^Q(X_q;\Z)\cong C_{q^2-1}\oplus(C_p)^{f(f+1)/2}
\]
\cite[proof of Theorem~7.4 and Theorem~8.6]{Nosaka2015Homotopical}.
The assumption $q>27$ avoids the exceptional values
$3,5,7,9,27$ and, for odd $q$, implies $f(p-1)>6$.
The preceding example recovers the components away from $p$ in these
low degrees and determines them in all positive degrees.
Its proof uses the rank-one local models directly and does not
require the degree to lie in a homological stability range.
\end{remark}

\section{Multiplicative structures and unstable homotopy}
\label{ainf:section}

Using the local model of Section~\ref{loc:section} and the chain model of Section~\ref{hom:section}, we construct direct summands by mixed Samelson products and determine whether the specified multiplication admits an $E_2$ refinement.
Unless otherwise stated, $X$ is a finite connected quandle in this section, and we put $G=\Inn(X)$, $H=\operatorname{Stab}_G(x_0)$, $L=\Cofib(BH\to BG)$, and $\cM=\cM_X$.
For a good prime $p$ dividing $|X|$, we also give an exponential lower bound for the number of cyclic direct summands of a fixed $p$-power order in each sufficiently large homotopy degree.

\begin{definition}\label{ainf:e2-refinement-convention}
We say that a specified grouplike $A_\infty$-space $Z$ \emph{admits a refinement to an $E_2$ structure} if $Z$ is $A_\infty$ equivalent to the underlying $A_\infty$-space of a grouplike $E_2$-space.
Thus refinement is understood up to $A_\infty$ equivalence, without requiring a strict extension on a fixed model.
Throughout this section, we use the specified product on $\cM=B(G;X)$, its lift to $\widetilde\cM$, and their localizations; other multiplications on these spaces are not considered.
\end{definition}

We compute the homotopy nilpotency of a grouplike $A_\infty$-space in a topological group model $Z$, using right-nested commutators with $[x,y]=xyx^{-1}y^{-1}$.
Infinite homotopy nilpotency means that, for every $r\ge2$, the $r$-fold right-nested commutator map $Z^r\to Z$ is not null-homotopic.

The quotient from a finite product of positive-dimensional spheres
to their smash product admits a homotopy right inverse after suspension.
Since $Z\simeq\Omega BZ$, precomposition with this quotient is injective
on based homotopy classes into $Z$.
Thus a nonzero iterated Samelson product detects a non-null iterated
commutator map.

\subsection{Spherical classes and iterated Samelson products}

For a coefficient ring $R$, a based space $W$, and $q\ge1$, let $h_R:\pi_q(W)\to H_q(W;R)$ denote the Hurewicz homomorphism with coefficients in $R$.
For homogeneous elements in a graded algebra, write $[a,b]=ab-(-1)^{|a||b|}ba$.
For $i,j\ge1$, the Samelson product of based continuous maps $S^i\to \cM$ and $S^j\to \cM$ is the commutator class on $S^i\wedge S^j$.
We use the same bracket notation for Samelson products and write $\operatorname{ad}_a(b)=[a,b]$ in both settings.
In a grouplike $A_\infty$-space, the Hurewicz homomorphism sends Samelson products to graded commutators of the Hurewicz images.

\begin{lemma}\label{ainf:higher-finiteness}
The group $\pi_j(\cM)$ is finitely generated for every $j\ge1$ and finite for $j\ge3$.
Consequently, $\pi_j(\cM_{(p)})$ is a finite $p$-group for $j\ge3$.
\end{lemma}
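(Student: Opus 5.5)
We will deduce the finiteness statements from the integral model of Theorem~\ref{int:main} and the rationalization in Remark~\ref{rat:model}. By Theorem~\ref{int:main}, $\cM\simeq\Omega Y$, so $\pi_j(\cM)\cong\pi_{j+1}(Y)$ for all $j\ge1$. Since $G=\Inn(X)$ is finite, Theorem~\ref{int:main} also provides a simply connected CW model of $Y$ with finitely many cells in each dimension. Its integral homology groups are therefore finitely generated in each degree. Serre's finite generation theorem \cite[Theorem~4.5.4]{MayPonto}, applied exactly as in the proof of Lemma~\ref{loc:L-properties}, then shows that every $\pi_{j+1}(Y)$ is finitely generated. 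This proves the first assertion.

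For the finiteness in degrees $j\ge3$, we use the rational equivalence $Y_{\Q}\simeq S^2_{\Q}$ from \eqref{rat:loop-equivalences}. It follows from Theorem~\ref{loc:main} with $R=\Q$, because $\widetilde H_*(BH;\Q)=0$ and $L_{\Q}\simeq*$ by Lemma~\ref{loc:L-properties}. Since $Y$ is simply connected, $\pi_n(Y)\otimes\Q\cong\pi_n(Y_{\Q})\cong\pi_n(S^2)\otimes\Q$. The right-hand side vanishes for $n\ge4$, because the rational homotopy of $S^2$ is concentrated in degrees $2$ and $3$. Hence $\pi_j(\cM)\otimes\Q=0$ for $j\ge3$; this is the consequence already recorded after \eqref{rat:loop-equivalences}. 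A finitely generated abelian group with vanishing rationalization is finite, so $\pi_j(\cM)$ is finite for $j\ge3$.

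Finally, $\cM$ is a connected simple space, as noted in Section~\ref{sec:preliminaries}, so its $p$-localization satisfies $\pi_j(\cM_{(p)})\cong\pi_j(\cM)\otimes\Z_{(p)}$. The $p$-localization of a finite abelian group is its Sylow $p$-subgroup, which is a finite $p$-group. This gives the final assertion.

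The only point needing care is the identification $\pi_n(Y)\otimes\Q\cong\pi_n(S^2)\otimes\Q$. This rests on the localization equivalence of Theorem~\ref{loc:main} for simply connected spaces, which applies here with $R=\Q$; no separate argument is required.
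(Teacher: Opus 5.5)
Your proof is correct and follows essentially the same route as the paper. Finite generation comes from the finite-type, simply connected CW model of $Y$ in Theorem~\ref{int:main} together with Serre's theorem, and finiteness for $j\ge3$ comes from $\pi_j(\cM)\otimes\Q=0$ via the rational equivalence $Y_{\Q}\simeq S^2_{\Q}$ of Remark~\ref{rat:model}, with localization then giving the $p$-primary parts.
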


\begin{proof}
By Theorem~\ref{int:main}, $Y$ is simply connected and has a CW model of finite type.
Serre's theorem \cite[Theorem~4.5.4]{MayPonto} therefore shows that $\pi_j(\cM)\cong\pi_{j+1}(Y)$ is finitely generated for every $j\ge1$.
By Remark~\ref{rat:model}, $\pi_j(\cM)\otimes\Q=0$ for $j\ge3$, so these groups are finite, and localization gives their $p$-primary parts.
\end{proof}

\begin{proposition}[Integral splittings by mixed Samelson products]
\label{ainf:iterated-samelson}\label{ainf:infinite-nilpotency}
Assume $p\nmid |H|$ and $L_{(p)}\not\simeq *$, and put
\[
 K=\Omega L_{(p)},\qquad
 m=\min\{j\ge1:\pi_j(K)\ne0\},\qquad A=\pi_m(K).
\]
Then $A$ is a nonzero finite abelian $p$-group.
In the model of Theorem~\ref{loc:main}, let
$\alpha\in\pi_1(\cM_{(p)})$ be the image of
$1\in\pi_1(\Omega S^2_{(p)})\cong\Z_{(p)}$
under the looped sphere inclusion, and let
$i:K\to\cM_{(p)}$ be the loop of the other wedge inclusion.
For every $n\ge0$, the group homomorphism
\[
 \psi_n:A\longrightarrow\pi_{m+n}(\cM_{(p)}),\qquad
 \psi_n(\beta)=\operatorname{ad}_\alpha^n(i_*\beta)
\]
is a split injection.
Consequently, these iterated Samelson products realize $A$ as a direct summand of $\pi_j(\cM_{(p)})$ for every $j\ge m$ and preserve the order of each element.
Moreover, the specified product on $\cM_{(p)}$ has infinite homotopy nilpotency, and $\pi_j(\mathcal{R}_X)\otimes\Z_{(p)}\ne0$ for $j\ge\max\{m,2\}$.
\end{proposition}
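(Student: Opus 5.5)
The plan is to detect the iterated Samelson products $\operatorname{ad}_\alpha^n(i_*\beta)$ in homology with coefficients in $R=\Z_{(p)}$, using the free-product word decomposition from the proof of Lemma~\ref{hom:pid-words}, and to read off a splitting from the Hurewicz homomorphism.

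First I would settle the assertions about $A$. By Lemma~\ref{loc:L-properties}, $L$ is simply connected and $\pi_n(L)$ is finite for $n\ge2$. Hence $L_{(p)}$ is simply connected with finite $p$-group homotopy groups, and $K=\Omega L_{(p)}$ is connected with finite $p$-group homotopy groups. Since $L_{(p)}\not\simeq *$, the number $m$ is defined and $A$ is a nonzero finite abelian $p$-group. By the Hurewicz theorem for the $(m-1)$-connected space $K$ (for $m=1$, $\pi_1K=\pi_2L_{(p)}$ is abelian), $h_R\colon A=\pi_m(K)\to H_m(K;R)$ is an isomorphism.

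Next I would compute in $H_*(\cM_{(p)};R)\cong H_*(\Omega(S^2\vee L)_{(p)};R)$. By Theorem~\ref{loc:main} and the chain isomorphism \eqref{hom:pid-word-complex}, this is the homology of $\bigoplus_{s\ge0}\mathcal C^{\otimes(s+1)}[s]$, where $\mathcal C$ models chains on $K$. The split K\"unneth theorem gives a direct-summand decomposition of $H_{m+n}$. Among the summands is the one indexed by words $c_0uc_1\cdots uc_n$ with exactly $n$ letters $u$, $c_0\in H_m$ and $c_1=\dots=c_n=1$. This summand is $H_m(K;R)\otimes H_0^{\otimes n}\cong A$; since all other factors are $H_0=R$, no Tor term interferes. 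Call $\pi\colon H_{m+n}(\cM_{(p)};R)\to A$ the projection onto it.

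The Hurewicz image of $\alpha$ is $\pm u$, and that of $i_*\beta$ is $c=h_R(\beta)$ in the word summand with $s=0$. Because Hurewicz carries Samelson products to graded commutators, $h_R(\psi_n(\beta))=\operatorname{ad}_{\pm u}^n(c)$. Expanding gives $\sum_a \pm\binom{n}{a}u^a c u^{n-a}$, and the term $c\,u^n$ occurs with coefficient $\pm1$. Therefore $\pi\circ h_R\circ\psi_n$ is $\pm$ the identity of $A$, so $\psi_n$ is a split injection with retraction $\pm\pi h_R$, and it preserves element orders. The main thing to verify carefully is that the word-isomorphism signs and the identification of the $c\,u^n$ summand are compatible with the Hurewicz map and the geometric wedge inclusions. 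I would handle this using the naturality of the cobar model \eqref{hom:cobar-wedge} under the two wedge inclusions, and the fact that the projection need only be correct up to a unit.

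Finally, since $\psi_n\ne0$ for every $n$, the $(n+1)$-fold right-nested Samelson product $\operatorname{ad}_\alpha^n(i_*\beta)$ is nonzero. By the remark preceding this subsection, the corresponding commutator map $\cM_{(p)}^{\,n+1}\to\cM_{(p)}$ is then non-null, which gives infinite homotopy nilpotency. For the rack space, the covering $\cM\to\mathcal R_X$ gives $\pi_j(\mathcal R_X)\cong\pi_j(\cM)$ for $j\ge2$. Localization gives $\pi_j(\cM)\otimes\Z_{(p)}\cong\pi_j(\cM_{(p)})$, which contains $A\ne0$ for $j\ge\max\{m,2\}$.
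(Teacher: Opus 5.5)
Your proposal is correct and follows the paper's proof. Both arguments detect $\operatorname{ad}_\alpha^n(i_*\beta)$ through the Hurewicz map in the word model $T_R(u)*D$, using the chain-level projection onto words with one letter from $D^+$ and $n$ copies of $u$. The paper projects onto the words $u^nb$, which occur with coefficient exactly one, while you project onto the mirror words $bu^n$, which occur with coefficient $\pm1$; this works equally well. Two small points. First, since $|u|=1$, the intermediate coefficients are not $\pm\binom{n}{a}$; for example, $\operatorname{ad}_u^2(c)=u^2c-cu^2$. This is harmless, because only the extreme coefficient is used. Second, you should say explicitly, as the paper does, that bilinearity of Samelson products in positive degrees (including $m=1$) makes $\psi_n$ additive.
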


\begin{proof}
Put $R=\Z_{(p)}$.
Lemma~\ref{loc:L-properties} and the Hurewicz isomorphism in the first nonzero degree show that $A$ is a nonzero finite abelian $p$-group and that $A\cong H_m(K;R)$.
This includes $m=1$, since the fundamental group of a connected loop space is abelian.
Use the inclusion-compatible chain model $T_R(u)*D$ from \eqref{hom:cobar-wedge}, where $|u|=1$, $du=0$, and $D$ is an augmented differential graded $R$-algebra modelling $K$, with $D_0=R$.
Write $D^+=\ker(D\to R)$ and also use $u$ for its homology class $h_R(\alpha)$.
Since $D=R\oplus D^+$ as chain complexes, the word decomposition \eqref{hom:pid-word-complex} shows that the words $u^nb$ with $b\in D^+$ form a direct summand of chain complexes, with differential $d(u^nb)=(-1)^nu^nd_Db$.
Projection onto these words and removal of $u^n$ induce
\begin{equation}\label{ainf:homology-projection}
 \operatorname{pr}_n:H_{m+n}(\cM_{(p)};R)\longrightarrow H_m(K;R)\cong A.
\end{equation}
For $b_\beta=h_R(\beta)$, the $u^nD^+$ component of $h_R\psi_n(\beta)=\operatorname{ad}_u^n(b_\beta)$ is $u^nb_\beta$ with coefficient one, while every other term ends in a positive power of $u$.
Hence
\[
 (\operatorname{pr}_n\circ h_R)\psi_n=\id_A.
\]
Samelson products are additive in these positive degrees, including $m=1$, so this left inverse proves the splitting and preservation of orders.
Arbitrarily long nonzero Samelson products imply infinite homotopy nilpotency.
The final assertion follows because the covering $\cM\to \mathcal{R}_X$ preserves homotopy groups in degrees at least two.
\end{proof}

\subsection{\texorpdfstring{$E_2$}{E2} refinements of the specified multiplication and universal covers}

The specified products on $\cM$, $\cM_{\Q}$, and $\cM_{(p)}$ at every prime $p$ are not homotopy commutative.
Indeed, for a generator $x$ of the free part of $\pi_1(\cM)$, \eqref{rat:samelson-square} shows that $[x,x]$ is nonzero after rationalization, and hence also integrally and at every prime.
Since $A_\infty$ equivalences preserve Samelson products and $E_2$ multiplications are homotopy commutative, none of these products admits an $E_2$ refinement.
This requires no good-prime hypothesis; we now consider the lifted product on the universal cover.

\begin{theorem}[Criterion for an $E_2$ refinement of the universal cover]
\label{ainf:universal-cover-e2}\label{ainf:universal-cover-e2-equivalences}
Assume $p\nmid |H|$.
The specified multiplication on $(\widetilde\cM)_{(p)}$ admits an $E_2$ refinement if and only if $p\nmid |X|$.
If $L_{(p)}\simeq *$, there are $A_\infty$ equivalences
\begin{equation}\label{ainf:universal-cover-double-loop}
 (\widetilde\cM)_{(p)}
 \simeq_{A_\infty}\Omega(S^3_{(p)})
 \simeq_{A_\infty}\Omega^2\bigl((BS^3)_{(p)}\bigr).
\end{equation}
Here $S^3$ is the topological group of unit quaternions, and $BS^3$ is its classifying space.

If $L_{(p)}\not\simeq *$, the specified multiplication on $(\widetilde\cM)_{(p)}$ has infinite homotopy nilpotency in the sense of iterated commutator maps.
With $m,A$ as in Proposition~\ref{ainf:iterated-samelson}, iterated Samelson products realize $A$ as a direct summand of $\pi_{m+2n+1}((\widetilde\cM)_{(p)})$ for every $n\ge0$.
\end{theorem}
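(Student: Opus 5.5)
The plan is to split by Corollary~\ref{loc:arithmetic-nontriviality}: at a good prime, $p\nmid|X|$ holds exactly when $L_{(p)}\simeq *$. So it suffices to prove two things. First, when $L_{(p)}\simeq *$, the equivalences \eqref{ainf:universal-cover-double-loop} hold; these exhibit an $E_2$ refinement. Second, when $L_{(p)}\not\simeq *$, the lifted product has infinite homotopy nilpotency. That rules out an $E_2$ refinement, because $E_2$ multiplications are homotopy commutative, so every Samelson product, and hence every iterated commutator map, vanishes up to homotopy, and $A_\infty$ equivalences preserve Samelson products.

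For the contractible case, Theorem~\ref{loc:main} gives $Y_{(p)}\simeq S^2_{(p)}$ under the localized $A_\infty$ structure. Corollary~\ref{low:universal-cover} identifies $B\widetilde\cM$ with $Y\langle2\rangle$, and this localizes to the $2$-connected cover of $S^2_{(p)}$. I will use that $2$-connected cover is compatible with localization for simply connected spaces. The Hopf fibration $S^1\to S^3\to S^2$ shows that $S^3\to S^2$ is the $2$-connected cover, so $S^3_{(p)}\simeq (S^2_{(p)})\langle2\rangle$. Looping this gives $(\widetilde\cM)_{(p)}\simeq_{A_\infty}\Omega(S^3_{(p)})$, exactly as in the proof of Corollary~\ref{low:universal-cover}. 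Since $S^3$ is a topological group, $S^3\simeq\Omega BS^3$ as $A_\infty$-spaces, and localizing (as in \cite[Proposition~6.2.4]{MayPonto}) gives $S^3_{(p)}\simeq\Omega(BS^3)_{(p)}$. Looping once more yields the double loop space, whose loop structure provides the $E_2$ refinement.

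For the noncontractible case, I will work in the chain model of Proposition~\ref{ainf:iterated-samelson}. Since $\pi_1$ of $(\widetilde\cM)_{(p)}$ vanishes, the class $\alpha$ is no longer available. Instead I will take $\beta'\in\pi_2$ corresponding to $\eta$, using $\pi_2(\widetilde\cM_{(p)})\cong\pi_2(\cM_{(p)})\supseteq\pi_3(S^2)_{(p)}$, so that $\beta'=\pm\frac12[\alpha,\alpha]$ by \eqref{rat:samelson-square} when $p$ is odd; at $p=2$ I will use the generator of $\pi_3(S^3)$ directly. In homology over $R=\Z_{(p)}$, the Hurewicz image of $\beta'$ lifts to a class proportional to $u^2$. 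The class $i_*\gamma$ for $\gamma\in A$ lifts to the universal cover when $m\ge2$. When $m=1$, I will use instead a suitable bracket $[\alpha,i_*\gamma]$ (the lift of degree $m+1$), which accounts for shifts; the claimed degrees are $m+2n+1$. So the natural candidate is $\psi'_n(\gamma)=\operatorname{ad}_{\beta'}^n([\alpha,i_*\gamma])$, interpreted as the universal-cover lift of a Samelson product with the $\pi_1$-class, which has degree $m+1+2n$.

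To produce the splitting, I will project onto the words $u^{2n+1}b$ and argue as in Proposition~\ref{ainf:iterated-samelson}. The Hurewicz image of $\operatorname{ad}_{u^2}^n(ub)$ has $u^{2n+1}b$-coefficient equal to a unit, because $u^2$ commutes with $u$ and every other term ends in $u$. This gives a left inverse, and nonvanishing of arbitrarily long brackets then gives infinite nilpotency.

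The main obstacle is the comparison of homology. The homology of $\widetilde\cM_{(p)}$ is not simply $T(u)*H_*(K)$, since passing to the cover changes the loop homology. So I must check that $\operatorname{pr}_{2n+1}\circ h_R$ is defined after composing with the covering map $\widetilde\cM_{(p)}\to\cM_{(p)}$, and that the lifted Samelson products map to the brackets computed downstairs. I would handle this by applying Hurewicz in $\cM_{(p)}$ after pushing forward along the covering map, which is multiplicative and an isomorphism on $\pi_{\ge2}$, so the splitting transfers. A secondary concern is the prime $2$, where the factor $\tfrac12$ is unavailable; there I would replace $\beta'$ with the class whose Hurewicz image is $u^2$ itself.
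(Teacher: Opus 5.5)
Your proposal is correct and follows essentially the same route as the paper: the reduction via Corollary~\ref{loc:arithmetic-nontriviality}, the Hopf-fibration identification of the $2$-connected cover of $S^2_{(p)}$ in the contractible case, and, in the noncontractible case, the classes $\operatorname{ad}_{\gamma}^n([\alpha,i_*\beta])$ for $\beta\in A$, with $\gamma$ the Hopf class, lifted to the cover and detected by $\operatorname{pr}_{2n+1}\circ h_R$ after pushing forward along the covering map. Two small points to tighten. First, naturality under the covering homomorphism and injectivity on $\pi_{\ge2}$ show that your lifted bracket equals the iterated Samelson product of the lifts computed in $(\widetilde\cM)_{(p)}$, and this is what gives infinite nilpotency of the cover's own product. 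Second, the Hopf class has Hurewicz image $\pm u^2$ at every prime (from $[\iota_2,\iota_2]=\pm2\eta$, $h_{\Z}[\alpha_0,\alpha_0]=2u_0^2$, and torsion-freeness of $H_2(\Omega S^2;\Z)$), so no separate treatment of $p=2$ is needed.
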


\begin{proof}
By Corollary~\ref{low:universal-cover}, $(\widetilde \cM)_{(p)}$ is $A_\infty$ equivalent to the simply connected cover of $\cM_{(p)}$: localization commutes with the $2$-connected cover of the simply connected delooping $Y$, as is seen on homotopy groups.

If $L_{(p)}\simeq *$, Theorem~\ref{loc:main} gives $\cM_{(p)}\simeq_{A_\infty}\Omega(S^2_{(p)})$.
Localizing the fiber sequence $S^3\to S^2\to K(\Z,2)$ associated with the Hopf bundle gives the $2$-connected cover of $S^2_{(p)}$.
Since $S^3\simeq_{A_\infty}\Omega BS^3$, this gives \eqref{ainf:universal-cover-double-loop} and hence an $E_2$ refinement.

Suppose $L_{(p)}\not\simeq *$ and use $\alpha,i,R,u,\operatorname{pr}_j$ from Proposition~\ref{ainf:iterated-samelson}.
Let $\gamma\in\pi_2(\cM_{(p)})$ be the image of the Hopf class of the sphere factor and, for $\beta\in A$, put $\beta'=[\alpha,i_*\beta]$.
Choosing the sign of the Hopf class gives
\begin{equation}\label{ainf:hopf-hurewicz-normalization}
 h_R(\gamma)=u^2.
\end{equation}
Indeed, let $\alpha_0\in\pi_1(\Omega S^2)$ be the standard generator and put $u_0=h_{\Z}(\alpha_0)$.
The Whitehead-square identity used in \eqref{rat:samelson-square}, together with $h_{\Z}[\alpha_0,\alpha_0]=2u_0^2$ and $H_2(\Omega S^2;\Z)\cong\Z$, gives this normalization integrally.
Naturality under localization and inclusion gives the displayed formula; this integral normalization also applies when $p=2$.

The classes $\gamma$ and $\beta'$ have degrees $2$ and $m+1$, respectively, and hence have unique lifts $\widetilde\gamma$ and $\widetilde{\beta'}$ to the universal cover, including when $m=1$.
Put $b_\beta=h_R(\beta)$.
Apply the projection \eqref{ainf:homology-projection} to
$h_R(\operatorname{ad}_\gamma^n\beta')=\operatorname{ad}_{u^2}^{n}([u,b_\beta])$.
As in Proposition~\ref{ainf:iterated-samelson}, its only term not ending in a positive power of $u$ is $u^{2n+1}b_\beta$, with coefficient one. Hence
\begin{equation}\label{ainf:cover-one-block-retraction}
 \operatorname{pr}_{2n+1}h_R\bigl(\operatorname{ad}_{\gamma}^{n}(\beta')\bigr)=\beta
 \qquad(n\ge0).
\end{equation}
For each $n\ge0$, the map $\beta\mapsto\operatorname{ad}_{\widetilde\gamma}^{n}(\widetilde{\beta'})$ is additive by bilinearity of Samelson products.
Composing the covering map, the Hurewicz homomorphism, and $\operatorname{pr}_{2n+1}$ gives a left inverse, proving the asserted direct-summand realization.
The resulting nonzero products of arbitrary length imply infinite homotopy nilpotency and exclude an $E_2$ refinement.
The equivalence with the arithmetic condition follows from Corollary~\ref{loc:arithmetic-nontriviality}.
\end{proof}

\subsection{Exponential growth of torsion homotopy groups}

The first nonzero group also gives an exponential lower bound for cyclic direct-summand multiplicities in each sufficiently large homotopy degree.
The tensor products below are over $\Z$; for finite abelian $p$-groups, tensoring over $\Z_{(p)}$ gives the same groups.
We use the terminology of $\Z/p^r$-hyperbolicity from \cite[Definition~2]{Boyde2022}.

\begin{theorem}[Exponential growth in each homotopy degree]
\label{ainf:general-exponential-torsion}
Assume the hypotheses of Proposition~\ref{ainf:iterated-samelson} and retain the notation $K,m,A$.
For $i,j\ge1$, let $c_{i,j}$ count the words in a Hall basis of the ordinary, ungraded free Lie algebra on two letters in which the letters occur $i$ and $j$ times, respectively.
Then the finite $p$-group $\pi_{i+(m+1)j}(\cM_{(p)})$ has a direct summand
\begin{equation}\label{ainf:general-torsion-summands}
 \bigl(A^{\otimes j}\bigr)^{c_{i,j}}.
\end{equation}
In particular, writing $\ell_n=c_{n,n}$, the group $\pi_{(m+2)n}(\cM_{(p)})$ contains $(A^{\otimes n})^{\ell_n}$ as a direct summand for every $n\ge1$; this summand is $C_p^{\ell_n h^n}$ if $A\cong C_p^h$ for some integer $h\ge1$.
Explicit formulas for $c_{i,j}$ and $\ell_n$ are given in Lemma~\ref{app:hall-count}.

Suppose that $A$ has a direct summand $C_{p^r}$, where $r\ge1$, and put $a=\nu_r(A)$, the number of its cyclic direct summands of order at least $p^r$.
Let $s_N(r)$ be the multiplicity of $C_{p^r}$ as a direct summand of
$\pi_N(\mathcal{R}_X)\otimes\Z_{(p)}$, for $N\ge3$.
Then $s_N(r)>0$ for all sufficiently large $N$, and
\begin{equation}\label{ainf:general-growth-rate}
 \liminf_{N\to\infty}\frac{\log s_N(r)}{N}
 \ge\frac{\log(4a)}{m+2}>0.
\end{equation}
The same bound holds for the cumulative multiplicity $T_N(r)$ of $C_{p^r}$ in $\pi_j(\mathcal{R}_X)\otimes\Z_{(p)}$ over $2\le j\le N$.
Thus the universal cover of $\mathcal{R}_X$ is $\Z/p^r$-hyperbolic, meaning that $\liminf_{N\to\infty}\log T_N(r)/N>0$.
\end{theorem}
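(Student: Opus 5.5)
Following Proposition~\ref{ainf:iterated-samelson}, we build Samelson products in $\cM_{(p)}\simeq_{A_\infty}\Omega((S^2\vee L)_{(p)})$ and detect them in homology via the word decomposition \eqref{hom:pid-word-complex} of the chain model $T_R(u)*D$. We take $R=\Z_{(p)}$, let $\alpha\in\pi_1$ be the sphere class with Hurewicz image $u$, and let $i_*\beta$ for $\beta\in A=\pi_m(K)$ have Hurewicz image $b_\beta\in H_m(K;R)\cong A$. For a Hall word $w(x,y)$ in the ordinary free Lie algebra on two letters with $i$ copies of $x$ and $j$ copies of $y$, we set $x=\alpha$ and let the $y$ slots take values $\beta_1,\dots,\beta_j\in A$. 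This gives a Samelson product in $\pi_{i+(m+1)j}$, since the degree is $i+mj+(j-1)$ from the $j-1$ extra brackets beyond the letters. Samelson products are multilinear in the positive-degree classes, including the case $m=1$, so each Hall word defines a homomorphism $\Psi_w:A^{\otimes j}\to\pi_{i+(m+1)j}(\cM_{(p)})$. The aim is to show that $\bigoplus_w\Psi_w$ over Hall words of bidegree $(i,j)$ is a split injection.

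For the left inverse, we extend the projection \eqref{ainf:homology-projection}. The words in $T_R(u)*D$ of the form $u^{e_0}b_1u^{e_1}\cdots u^{e_{j-1}}b_j u^{e_j}$ with $b_k\in D^+$ and a fixed pattern $(e_0,\dots,e_j)$ span a direct summand of chain complexes. Its homology is $H(D^+)^{\otimes j}$ plus Tor terms. In total degree $i+(m+1)j$, the lowest possible $D^+$ degrees are $m$ each, and $H_m(D^+)=A$ with nothing below, so the K\"unneth degree count forces the summand to equal exactly $A^{\otimes j}$ once the $u$-exponents sum to $i+j-1$. Here we should check the degree bookkeeping: the $u$ count is $i$, and the remaining $j-1$ degrees must be absorbed. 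The cleanest route is probably to measure each $y$-letter as $[u,b]$-type blocks, or equivalently to use the Lie-algebra image of $u$ and $b_\beta$. In any case, the Hurewicz image of $\Psi_w(\beta_1\otimes\cdots\otimes\beta_j)$ is the graded Lie polynomial $w(u,b)$ in the tensor algebra $T(u)\otimes$ generated by the letters $b_{\beta_k}$. This makes the problem combinatorial.

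Classically, Hall words in two letters map to linearly independent elements of the free associative algebra, and each has a distinct leading monomial (its standard/Lyndon word) with coefficient $\pm1$. We therefore project onto the span of the leading monomials $u^{e_0}b u^{e_1}\cdots b u^{e_j}$, and we tensor the $b$-slots as $A^{\otimes j}$, via K\"unneth, which is split. The unitriangularity gives a left inverse of $\bigoplus_w\Psi_w$ on $(A^{\otimes j})^{c_{i,j}}$. The main obstacle is signs in the graded setting, where $u$ is odd and the $b$ have degree $m$. The graded brackets differ from the ungraded ones only by signs on monomials, so the leading coefficient stays a unit $\pm1$. Nevertheless, the ordering argument must be carried out in the graded free associative algebra, and we must also rule out cancellations from terms $[x,x]$, which Hall words avoid. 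This yields \eqref{ainf:general-torsion-summands}, and the case $i=j=n$ gives degree $(m+2)n$.

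For growth, we use $\pi_N(\mathcal R_X)\otimes\Z_{(p)}\cong\pi_{N-1}(\cM_{(p)})$ for $N\ge3$. The tensor power $A^{\otimes j}$ contains at least $a^j$ summands $C_{p^r}$. To cover every large degree $N-1$, we choose $j$ with $i=N-1-(m+1)j\ge1$ and take $j\approx(N-1)/(m+2)$, so that $i\approx j$. Lemma~\ref{app:hall-count} gives $c_{i,j}\approx\binom{i+j}{i}/(i+j)$, whose exponential rate is $4^{j}$ up to polynomial factors, and $i-j$ stays bounded, so the binomial asymptotic still applies. Thus $s_N(r)\ge a^jc_{i,j}$, so $\liminf\log s_N(r)/N\ge\log(4a)/(m+2)$. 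The cumulative count $T_N(r)\ge s_N(r)$ inherits the bound, and this is $\Z/p^r$-hyperbolicity of the universal cover, since it has the same $\pi_j$ for $j\ge2$.
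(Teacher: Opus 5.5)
Your route, which builds the summands as Samelson products and detects them through the Hurewicz map and the word decomposition of $T_R(u)*D$, differs from the paper's. As written, it does not establish the splitting \eqref{ainf:general-torsion-summands}.

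\textbf{First gap: the letters and degrees are wrong.}
Samelson products add degrees, $\pi_a\times\pi_b\to\pi_{a+b}$, and brackets contribute nothing to the degree. So with $x=\alpha$ and $y=i_*\beta$, a word of bidegree $(i,j)$ lies in $\pi_{i+mj}(\cM_{(p)})$, not in $\pi_{i+(m+1)j}$. Your $u$-count ``$i+j-1$'' matches neither choice of letter: it is $i$ for $y=i_*\beta$ and $i+j$ for $y=[\alpha,i_*\beta]$.

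Moreover, with $y=i_*\beta$ the Hurewicz image contains monomials in which two letters $b,b'$ are adjacent. In the word decomposition \eqref{hom:pid-word-complex} such a pair is a single element $bb'\in D^+$. Those components therefore record only the Pontryagin product $H_m(K;R)\otimes H_m(K;R)\to H_{2m}(K;R)$, which can vanish, so this version gives no control over copies of $A^{\otimes j}$.

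The letter has to be $y=[\alpha,i_*\beta]$, of degree $m+1$, with Hurewicz image $[u,b_\beta]$. Homotopically, this is exactly the wedge summand $\Sigma K$ in the paper's identification $E\simeq S^2\wedge K_+\simeq\Sigma(S^1\vee\Sigma K)$ of the fiber of the pinch map $S^2\vee L_{(p)}\to L_{(p)}$.

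\textbf{Second gap: the unitriangularity is asserted in the wrong algebra.}
Even with the correct letter, your triangularity argument does not transfer to $T_R(u)*D$.
\begin{itemize}
\item After substituting $x\mapsto u$ and $y\mapsto ub-(-1)^mbu$, monomials from different two-letter words collide. For example, the term $u\cdot bu$ of $xy$ equals the term $ub\cdot u$ of $yx$. Distinct leading monomials in the free associative algebra on $x,y$ therefore do not yield a triangular matrix without a further argument.
\item After multilinearizing in $\beta_1,\dots,\beta_j$, a coefficient $\pm1$ on a two-letter pattern only says that a signed sum of permutation operators on $A^{\otimes j}$ has total coefficient $\pm1$. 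Such an operator need not be invertible on $A^{\otimes j}$ unless you show the pattern arises from a single permutation.
\item You must verify this with Koszul signs for odd $u$, where $[u,u]=2u^2\ne0$.
\end{itemize}
Supplying all of this amounts to proving an algebraic Lazard elimination (Hilton--Milnor) theorem for $T_R(u)*D$, together with a bottom-degree K\"unneth analysis.

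\textbf{How the paper avoids this.}
The paper works at the space level. The section $L_{(p)}\to S^2\vee L_{(p)}$ splits $\pi_*(\Omega E)\otimes\Z_{(p)}$ off $\pi_*(\cM_{(p)})$. The Hilton--Milnor theorem for $\Omega\Sigma(S^1\vee\Sigma K)$ then gives, for each Hall word of bidegree $(i,j)$, a retract $\Omega\Sigma\bigl(S^i\wedge(\Sigma K)^{\wedge j}\bigr)$. Its first homotopy group is $A^{\otimes j}$ in degree $i+(m+1)j$, by connectivity, K\"unneth and Hurewicz. No signs or triangularity need to be checked.

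\textbf{Minor slips in the growth step.}
Neither of these changes the rate $\log(4a)/(m+2)$, and the rest of your asymptotic argument matches the paper's.
\begin{itemize}
\item The covering $\cM\to\mathcal R_X$ gives $\pi_N(\mathcal R_X)\otimes\Z_{(p)}\cong\pi_N(\cM_{(p)})$ for $N\ge2$, with no degree shift.
\item $A^{\otimes j}$ contains $a^j-\nu_{r+1}(A)^j\ge a^{j-1}$ summands isomorphic to exactly $C_{p^r}$, not $a^j$ in general. For example, $A=C_{p^r}\oplus C_{p^{r+1}}$ gives $2^j-1$.
\end{itemize}
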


\begin{proof}
Put $W=S^2\vee L_{(p)}$.
As in the proof of Proposition~\ref{hom:rack-shuffle}, the homotopy fiber $E$ of the pinch map $W\to L_{(p)}$ is
\begin{equation}\label{ainf:pinch-fibre}
 E\simeq S^2\wedge K_+\simeq S^2\vee\Sigma^2K
       =\Sigma(S^1\vee\Sigma K),
\end{equation}
where $K_+$ is $K$ with a disjoint basepoint, and the second equivalence is the suspension splitting.
The section $L_{(p)}\hookrightarrow W$ splits the homotopy exact sequence.
Since Theorem~\ref{loc:main} gives $\cM_{(p)}\simeq\Omega(W_{(p)})$, localization realizes $\pi_q(\Omega E)\otimes\Z_{(p)}$ as a direct summand of $\pi_q(\cM_{(p)})$ for $q\ge1$.

The Hilton--Milnor decomposition for the connected spaces $S^1$ and $\Sigma K$ \cite[Theorem~5.8]{Lavenir2025} gives one factor
\begin{equation}\label{ainf:hall-factor}
 \Omega\Sigma\bigl((S^1)^{\wedge i}\wedge(\Sigma K)^{\wedge j}\bigr)
\end{equation}
for each Hall word of multidegree $(i,j)$.
The space $K$ is $(m-1)$-connected with $H_m(K;\Z)=A$, also when $m=1$ since $\pi_1(K)$ is abelian.
For $d=i+(m+1)j$, connectivity and the K\"unneth theorem show that the smash product in \eqref{ainf:hall-factor} is $(d-1)$-connected with $H_d=A^{\otimes j}$; Tor terms occur in higher degrees.
Applying the Hurewicz theorem after suspension and then looping identifies the first nonzero homotopy group of \eqref{ainf:hall-factor} as $A^{\otimes j}$ in degree $d$.
The Hilton--Milnor projections split these factors simultaneously, with only finitely many contributing in each degree because their connectivity increases with word length.
Localization preserves the splittings and the finite $p$-groups $A^{\otimes j}$, proving \eqref{ainf:general-torsion-summands}.
The ambient groups are finite by Lemma~\ref{ainf:higher-finiteness}, since $d\ge3$.

To obtain the growth estimate, fix a cyclic direct-sum decomposition of $A$ containing one summand $C_{p^r}$.
In $A^{\otimes n}$, choose this summand in the first factor and any of the $a$ summands of order at least $p^r$ in each remaining factor.
Distributivity of tensor products and $C_{p^r}\otimes C_{p^s}\cong C_{p^r}$ for $s\ge r$ give a direct summand $(C_{p^r})^{a^{n-1}}$.
For large $N$, write $N=(m+2)n+\delta$ with $0\le\delta\le m+1$.
Taking $(i,j)=(n+\delta,n)$ in \eqref{ainf:general-torsion-summands} and using the covering $\cM\to\mathcal R_X$ gives
\[
 s_N(r)\ge c_{n+\delta,n}a^{n-1}.
\]
Lemma~\ref{app:hall-count} gives $\log c_{n+\delta,n}/n\to\log4$ for each of the finitely many possible values of $\delta$.
Since $N/n\to m+2$, taking logarithms proves \eqref{ainf:general-growth-rate} in all sufficiently large degrees.
Finally, $T_N(r)\ge s_N(r)$, and passage to the universal cover preserves higher homotopy groups.
\end{proof}

\begin{corollary}[The cyclic family and cyclic Sylow subgroups]
\label{ainf:cyclic-direct-summands}\label{ainf:exponential-torsion}
Under the hypotheses of Theorem~\ref{cyc:sylow-homology}, let $p^r$ be the order of a Sylow $p$-subgroup and $e$ its inertial index.
For every $j\ge2e-2$, the group
$\pi_j(\cM_{(p)})\cong\pi_j(\mathcal{R}_X)\otimes\Z_{(p)}$ has
$C_{p^r}$ as a direct summand, realized by the iterated Samelson products of Proposition~\ref{ainf:iterated-samelson}.

Moreover, $\pi_{2en}(\cM_{(p)})$ has
$(C_{p^r})^{\ell_n}$ as a direct summand for $n\ge1$, and
\begin{equation}\label{ainf:growth-rate}
 \liminf_{N\to\infty}\frac{\log s_N(r)}{N}
 \ge\frac{\log4}{2e}.
\end{equation}
The same bound holds for $T_N(r)$, so the universal cover of $\mathcal R_X$ is $\Z/p^r$-hyperbolic.
This applies to the cyclic Alexander family $X_{p^r,\omega}$ of
Section~\ref{sec:cyclic} with $e=d$.
\end{corollary}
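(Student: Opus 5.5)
The plan is to specialize Proposition~\ref{ainf:iterated-samelson} and Theorem~\ref{ainf:general-exponential-torsion} to the cyclic Sylow situation. The only input needed is the invariants $m$ and $A$ of $K=\Omega L_{(p)}$.

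First, Theorem~\ref{cyc:sylow-homology} gives $p\nmid|H|$, $p$ odd, $e>1$, $e\mid p-1$, and $\Omega L_{(p)}\simeq S^{2e-1}\{p^r\}_{(p)}$. Since $r\ge1$, the space $L_{(p)}$ is not contractible. Indeed, its loop space has $\pi_{2e-2}\cong C_{p^r}\neq0$, by the homotopy sequence of $S^{2e-1}\{p^r\}\to S^{2e-1}\xrightarrow{[p^r]}S^{2e-1}$, as in \eqref{cyc:loop-low-homotopy}. The same sequence shows that $\pi_j(K)=0$ for $1\le j<2e-2$. Hence, in the notation of Proposition~\ref{ainf:iterated-samelson}, we have $m=2e-2$ and $A\cong C_{p^r}$. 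Note $m\ge2$, because $e\ge2$.

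Second, Proposition~\ref{ainf:iterated-samelson} then realizes $A=C_{p^r}$ as a direct summand of $\pi_j(\cM_{(p)})$ for every $j\ge m=2e-2$, via $\operatorname{ad}_\alpha^{j-m}(i_*\beta)$. For the identification $\pi_j(\cM_{(p)})\cong\pi_j(\mathcal R_X)\otimes\Z_{(p)}$ I would argue as follows. For $j\ge2$, the covering $\cM\to\mathcal R_X$ is an isomorphism on $\pi_j$, and localization of the simple space $\cM$ tensors homotopy groups with $\Z_{(p)}$; this covers all $j\ge 2e-2\ge2$.

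Third, for the exponential statements, apply Theorem~\ref{ainf:general-exponential-torsion} with $m+2=2e$. The group $\pi_{2en}(\cM_{(p)})$ contains $(A^{\otimes n})^{\ell_n}$, and $C_{p^r}^{\otimes n}\cong C_{p^r}$, so this summand is $(C_{p^r})^{\ell_n}$. Here $A$ is cyclic of order $p^r$, so $a=\nu_r(A)=1$, and \eqref{ainf:general-growth-rate} gives the bound $\log(4)/(2e)$. The assertions about $T_N(r)$ and $\Z/p^r$-hyperbolicity carry over verbatim from that theorem.

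Finally, for the family $X_{p^r,\omega}$, we have $G=C_{p^r}\rtimes C_d$, the Sylow subgroup is the normal $C_{p^r}$, and $C_d$ acts faithfully, so $N_G(P)/C_G(P)\cong C_d$ and $e=d$. Alternatively, one can use Theorem~\ref{cyc:loop} directly.

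There is no real obstacle; the only points needing care are the two pieces of bookkeeping above. One is that $m$ is exactly $2e-2$: the lowest homotopy group of $S^{2e-1}\{p^r\}$ is $C_{p^r}$ in degree $2e-2$, with nothing below. The other is that the direct-summand realization in degree $j$ is compatible with the identification of $\pi_j(\mathcal R_X)\otimes\Z_{(p)}$.
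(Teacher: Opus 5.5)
Your proposal is correct and follows the paper's own proof: read off $m=2e-2\ge2$ and $A\cong C_{p^r}$ from the homotopy exact sequence of $S^{2e-1}\{p^r\}_{(p)}$, then apply Proposition~\ref{ainf:iterated-samelson} and Theorem~\ref{ainf:general-exponential-torsion} with $a=1$. As in the paper, the covering $\cM\to\mathcal R_X$ transfers the statements to the rack space in degrees at least two.
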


\begin{proof}
The homotopy exact sequence of \eqref{cyc:sylow-fibre} gives $m=2e-2\ge2$ and $A=C_{p^r}$.
Apply Proposition~\ref{ainf:iterated-samelson} and Theorem~\ref{ainf:general-exponential-torsion} with $a=1$, using the covering $\cM\to\mathcal R_X$ in degrees at least two.
\end{proof}

For the tetrahedral quandle, Theorem~\ref{tet:loop-model} gives $m=1$
and $A=C_2$.
Thus $C_2$ is a direct summand of $\pi_j(\cM_{(2)})$ for every $j\ge1$
and of $\pi_j(\mathcal{R}_X)\otimes\Z_{(2)}$ for every $j\ge2$.
Moreover, Theorem~\ref{ainf:general-exponential-torsion} gives
\[
 \liminf_{N\to\infty}\frac{\log s_N(1)}N
 \ge\frac{\log4}{3}.
\]

The splittings used in the growth argument concern the underlying spaces and their homotopy groups; they do not assert product decompositions preserving the specified multiplication.

\appendix

\section{Counting Hall words and asymptotic estimates}
\label{app:hall}

\begin{lemma}\label{app:hall-count}
Take a Hall basis of the ordinary, ungraded free Lie algebra on two
letters, and let $c_{i,j}$ count the words in which the first and second
letters occur $i$ and $j$ times, respectively.
For $i,j\ge1$,
\begin{equation}\label{app:multigraded-witt}
 c_{i,j}=\frac1{i+j}
 \sum_{d\mid\gcd(i,j)}\mu(d)
 \binom{(i+j)/d}{j/d}.
\end{equation}
Here $\mu$ is the M\"obius function.
In particular, $\ell_n=c_{n,n}$ is given by
\begin{equation}\label{ainf:hall-multiplicity}
 \ell_n=\frac1{2n}\sum_{e\mid n}\mu(e)
                 \binom{2n/e}{n/e}\qquad(n\ge1).
\end{equation}
For every fixed integer $\delta\ge0$,
\begin{equation}\label{app:hall-bounded-difference}
 c_{n+\delta,n}\sim
 \frac{2^\delta4^n}{2\sqrt\pi\,n^{3/2}}
 \qquad(n\to\infty).
\end{equation}
Consequently, $\log c_{n+\delta,n}/n\to\log4$.
\end{lemma}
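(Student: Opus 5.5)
The plan is to derive the multigraded Witt formula \eqref{app:multigraded-witt} from the Poincaré–Birkhoff–Witt theorem, and then to get the asymptotics directly from the central binomial coefficient.

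\emph{The counting formula.} The Hall words of multidegree $(i,j)$ form a basis of the $(i,j)$-homogeneous component of the free Lie algebra $\mathcal L$ on two letters $x,y$. Hence $c_{i,j}=\dim\mathcal L_{i,j}$. The universal enveloping algebra of $\mathcal L$ is the free associative algebra, whose bigraded Hilbert series is $1/(1-s-t)$. By Poincaré–Birkhoff–Witt,
\[
 \frac1{1-s-t}=\prod_{(a,b)\ne(0,0)}(1-s^at^b)^{-c_{a,b}},
\]
where we put $c_{a,0}$ and $c_{0,b}$ equal to $1$ for $a=1$ or $b=1$ and $0$ otherwise. Take logarithms of both sides and compare the coefficients of $s^it^j$. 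The left side gives $\sum_k (s+t)^k/k$, whose $s^it^j$ coefficient is $\binom{i+j}{j}/(i+j)$. The right side gives $\sum_{d\mid\gcd(i,j)} \tfrac1d\, c_{i/d,j/d}$. Multiplying by $i+j$ yields
\[
 \binom{i+j}{j}=\sum_{d\mid g}\tfrac{i+j}{d}\,c_{i/d,j/d}, \qquad g=\gcd(i,j).
\]
Möbius inversion over the divisors of $g$ then gives \eqref{app:multigraded-witt}. Specializing to $i=j=n$ gives \eqref{ainf:hall-multiplicity}.

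\emph{The asymptotics.} Set $g=\gcd(n+\delta,n)$. Since $g$ divides $\delta$ when $\delta>0$, the $d=1$ term dominates. The main term is
\[
 \frac{1}{2n+\delta}\binom{2n+\delta}{n}.
\]
Every other term with $d\ge2$ is bounded by $\binom{(2n+\delta)/2}{\lfloor n/2\rfloor}\le 2^{n+\delta}$, and the number of such terms is at most the number of divisors of $g$. That count is bounded by the number of divisors of $\delta$, or is at most $n$ when $\delta=0$. So the error is $O(n\,2^n)=o(4^n n^{-3/2})$.

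For the main term, write
\[
 \binom{2n+\delta}{n}=\binom{2n}{n}\prod_{k=1}^{\delta}\frac{2n+k}{n+k}\sim 2^\delta\binom{2n}{n}.
\]
Stirling's formula gives $\binom{2n}{n}\sim 4^n/\sqrt{\pi n}$. Dividing by $2n+\delta\sim 2n$ yields \eqref{app:hall-bounded-difference}. Taking logarithms gives $\log c_{n+\delta,n}/n\to\log4$, since the polynomial factors contribute $o(n)$.

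The main obstacle is the PBW step: justifying the product formula with the correct exponents, and keeping track of the single-letter words $x$ and $y$. Those words only affect the coefficients with $i=0$ or $j=0$, so they drop out for $i,j\ge1$. If needed, I would cite the standard multigraded Witt formula, as in Reutenauer's \emph{Free Lie algebras}, instead of rederiving it.
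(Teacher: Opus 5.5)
Your proposal is correct and follows essentially the same route as the paper: the multigraded PBW identity (with $c_{1,0}=c_{0,1}=1$), formal logarithms and M\"obius inversion, then isolating the $d=1$ term, showing the $d\ge2$ terms are exponentially smaller, and applying Stirling (your product formula for $\binom{2n+\delta}{n}/\binom{2n}{n}$ just makes the factor $2^\delta$ explicit). One small slip: the intermediate bound by $\binom{(2n+\delta)/2}{\lfloor n/2\rfloor}$ is ill-posed when $(2n+\delta)/2$ is not an integer, but the bound $\binom{(2n+\delta)/d}{n/d}\le 2^{(2n+\delta)/d}\le 2^{n+\delta}$ that you actually use holds, so the argument stands.
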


\begin{proof}
Including $c_{1,0}=c_{0,1}=1$, the multigraded
Poincar\'e--Birkhoff--Witt identity is
\[
 (1-x-y)^{-1}
 =\prod_{i+j>0}(1-x^iy^j)^{-c_{i,j}}.
\]
Taking formal logarithms and comparing coefficients gives
\[
 \frac1{i+j}\binom{i+j}{j}
 =\sum_{d\mid\gcd(i,j)}\frac{c_{i/d,j/d}}d.
\]
Multiplication by $i+j$ and M\"obius inversion give
\eqref{app:multigraded-witt}.
For $i=n+\delta$ and $j=n$, put $k=2n+\delta$ and separate the term with $d=1$:
\[
 k c_{n+\delta,n}=\binom{2n+\delta}{n}+R_n,
 \qquad |R_n|\le k2^{k/2}.
\]
Indeed, each remaining divisor $d$ satisfies $d\ge2$, so the corresponding
binomial coefficient is at most $2^{k/d}\le2^{k/2}$, and there are at most $k$ divisors.
For fixed $\delta$, Stirling's formula gives
\[
 \binom{2n+\delta}{n}\sim\frac{2^\delta4^n}{\sqrt{\pi n}}.
\]
The error is exponentially smaller than this term; division by
$k\sim2n$ proves \eqref{app:hall-bounded-difference} and its consequences.
\end{proof}

\section*{Acknowledgements}

The author thanks Katsumi Ishikawa for helpful correspondence concerning his spectral-sequence approach to the homology of connected dihedral quandles.
The author used ChatGPT to assist with editing and reviewing the manuscript and takes responsibility for its final content.

\end{document}